\def\marker{\>\hbox{${\vcenter{\vbox{
    \hrule height 0.4pt\hbox{\vrule width 0.4pt height 6pt
    \kern6pt\vrule width 0.4pt}\hrule height 0.4pt}}}$}\>}
\newtheorem{theorem}{Theorem} 
\newtheorem{theorem*}{Theorem} 
\newtheorem{proposition}[theorem]{Proposition} 
\newtheorem{corollary}[theorem]{Corollary}
\newtheorem{lemma}[theorem]{Lemma}
\newtheorem{conjecture}[theorem]{Conjecture}
\newtheorem{alg}[theorem]{Algorithm}
\theoremstyle{definition}
\newtheorem{question}{Question}
\newtheorem{construction}{Construction}
\theoremstyle{remark}
\newcommand{\CL}[1]{\left\lceil #1 \right\rceil}
\newcommand{\FL}[1]{\left\lfloor #1 \right\rfloor}
\newcommand{\proj}{\operatorname{proj}}
\title{The Unit Bar Visibility Number of a Graph}
\author{Emily Gaub$^{1,4}$, Michelle Rose$^{2,4}$, and Paul S.\ Wenger$^{3,4}$}
\date{\today}
\begin{document}
\maketitle

\begin{abstract}
A {\it $t$-unit-bar representation} of a graph $G$ is an assignment of sets of at most $t$ horizontal unit-length segments in the plane to the vertices of $G$ so that (1) all of the segments are pairwise nonintersecting, and (2) two vertices $x$ and $y$ are adjacent if and only if there is a vertical channel of positive width connecting a segment assigned to $x$ and a segment assigned to $y$ that intersects no other segment.
The {\it unit bar visibility number} of a graph $G$, denoted $ub(G)$, is the minimum $t$ such that $G$ has a $t$-unit-bar visibility representation.
Our results include a linear time algorithm that determines $ub(T)$ when $T$ is a tree, bounds on $ub(K_{m,n})$ that determine $ub(K_{m,n})$ asymptotically when $n$ and $m$ are asymptotically equal, and bounds on $ub(K_n)$ that determine $ub(K_n)$ exactly when  $n\equiv 1,2\pmod 6$.

\end{abstract}

\footnotetext[1]{Pacific Univ., Forest Grove, OR 97116; {\tt gaub2680@pacificu.edu}.}
\footnotetext[2]{
Mount St. Mary's Univ., Emmitsburg, MD 21727; {\tt MMRose1@email.msmary.edu}.}
\footnotetext[3]{
School of Mathematical Sciences, Rochester Institute of Technology, Rochester, NY 14623; {\tt pswsma@rit.edu}.}
\footnotetext[4]{
Research supported by REU in Extremal Graph Theory and Dynamical Systems -- NSF award \#1358583.}

\baselineskip18pt 
\section{Introduction}

Motivated by the challenges of very-large-scale integration (VLSI), significant study has focused on graphs with representations having nice geometric descriptions.
A {\it visibility representation} of a graph $G$ consists of an assignment of pairwise disjoint geometric objects (typical examples include segments, Cartesian products of segments, or spheres) to the vertices of $G$ so that two vertices are adjacent if and only if there is an uninterrupted ``line of sight" (generally a line segment or channel of positive width) joining the objects assigned to those vertices.

A {\it bar visibility representation} of a graph $G$ is an assignment of distinct horizontal line segments (henceforth called {\it bars}) in the plane to the vertices of $G$ so that two vertices are adjacent if and only if there is an uninterrupted channel of positive width that joins the bars corresponding to those vertices.
A graph with a bar visibility representation is a {\it bar visibility graph}.
Tamassia and Tollis~\cite{TT} and Wismath~\cite{Wismath} characterized bar visibility graphs: a graph $G$ is a bar visibility graph if and only if it is planar and there is a planar embedding of $G$ such that all cut-vertices appear on the same face.

In a bar visibility representation of a graph, vertices are allowed to have horizontal bars of arbitrary length.
For the viewpoint of VLSI design, it is reasonable to assume that the size of components in a circuit design have roughly equal size.
A graph is a {\it unit bar visibility graph} if it has a bar visibility representation in which every bar has the same length.
Such a representation is called a {\it unit bar visibility representation}.
For simplicity, throughout the paper we assume that all unit bars have length $1$.

In~\cite{DV}, Dean and Veytsel introduced unit bar visibility graphs and characterized the trees, complete bipartite graphs, and complete graphs that are unit bar visibility graphs.
Wigglesworth~\cite{wigglesworth} continued the study of unit bar visibility graphs, characterizing unit-bar visibility graphs that have representations with width at most $2$ (that is, the projection of all bars onto the $x$-axis has length at most $2$). 
Wigglesworth also proved other structural relations between a unit bar visibility graph and its unit bar visibility representations.
To date, there is no characterization of unit bar visibility graphs.

The family of bar visibility graphs is quite restrictive.
In~\cite{CHJLW}, Chang et al.\ introduced a generalization of bar visibility representations that captures all graphs.
A {\it $t$-bar} is the union of (at most) $t$ horizontal bars in the plane.
A graph $G$ has a {\it $t$-bar visibility representation} if there is an assignment of $t$-bars to the vertices of $G$ such that two vertices $u$ and $v$ are adjacent in $G$ if and only if there is an uninterrupted vertical channel of positive width joining a bar assigned to $u$ to a bar assigned to $v$.
The {\it bar visibility number} of $G$, denoted $b(G)$, is the minimum $t$ such that $G$ has a $t$-bar visibility representation.
In~\cite{CHJLW}, Chang et al.\ proved that $b(G)\le 2$ if $G$ is planar, determined the bar visibility number of complete bipartite graphs within $1$, proved that $b(K_n)=\CL{n/6}$, and proved that $b(G)\le \CL{|V(G)|/6}+2$ for all graphs.
The notion of bar visibility numbers has since been extended to directed graphs~\cite{ABHW}.

In this paper we study $t$-bar visibility representations of graphs in which every bar (that is, every individual bar in every $t$-bar) has the same length, which we can assume to be $1$.
We call such a representation a {\it $t$-unit-bar visibility representation}.
The {\it unit bar visibility number} of $G$, denoted $ub(G)$, is the minimum $t$ such that $G$ has a $t$-unit-bar visibility representation.
A $t$-unit-bar visibility representation of $G$ with $t=ub(G)$ is called {\it optimal}.

By assigning the edges of a graph disjoint intervals of length $1$ on the $x$-axis and giving vertices $u$ and $v$ bars that project onto the interval for the edge $uv$, it is clear that $ub(G)\le \Delta(G)$ for all $G$, where $\Delta(G)$ denotes the maximum degree of $G$.
Thus $ub(G)$ is well-defined.
Since a $t$-unit-bar visibility representation of $G$ is also a $t$-bar visibility representation of $G$, it follows that $b(T)\le ub(T)$.

We study the unit bar visibility number of graphs in various families.
In Section~\ref{trees}, we present a linear-time algorithm that determines the unit bar visibility number of trees and generates an optimal representation.
In Section~\ref{bipartite} we give bounds for the unit bar visibility number of complete bipartite graphs that are asymptotically tight when the partite sets are of asymptotically equal sizes.
In Section~\ref{complete} we study the unit bar visibility number of complete graphs, proving that $\lceil \frac{n}{6} \rceil \leq ub(K_n) \leq \lceil \frac{n+4}{6} \rceil$.
Section~\ref{conclusion} contains open questions and conjectures.

Throughout the paper, all graphs are finite.
We let $d_G(v)$ denote the degree of a vertex $v$ in a graph $G$; when the graph is clear, we simply write $d(v)$. 
Given a positive integer $n$, we let $[n]$ denote the set $\{1,\ldots,n\}$.
We let $K_{n}$ denote the complete graph on $n$ vertices and let $K_{m,n}$ denote the complete bipartite graph with partite sets of order $m$ and $m$.

\section{Preliminaries}

When an arrangement of unit bars in the plane is given without it being identified as a representation of a specific graph, we refer to it as a unit bar visibility {\it layout}.
When we refer to a bar in a $t$-unit-bar visibility representation or layout we are referring to one of the unit bars in one of the $t$-unit-bars.
Since all bars are assumed to have length $1$ we may describe each bar by the coordinates of its left endpoint; we denote the left endpoint of a bar $b$ by $(x_b,y_b)$.
If $y_b<y_{b'}$ then we say that $b$ is {\it below} $b'$ and $b'$ is {\it above} $b$.
If $x_b<x_{b'}$ then we say that $b$ is {\it to the left of} $b'$ and $b'$ is {\it to the right of} $b$.

We say that two bars {\it see} each other if there is an uninterrupted vertical channel of positive width between the bars.
When such a channel exists, we also say that the corresponding $t$-unit-bars and the corresponding vertices see each other.
If two bars $b$ and $b'$ see each other and $b$ is below $b'$, then we say that $b$ {\it sees $b'$ above} and that {\it $b'$ sees $b$ below}.

%
%

Let $R$ be a $t$-unit-bar visibility layout.
We define $G(R)$ to be the unit bar visibility graph that is represented by $R$ when all bars represent distinct vertices.
Given a bar $b$ in $R$, define $\proj(b)$ to be the projection of $b$ onto the $x$-axis.
Similarly, for a set of bars $S$, define $\proj(S)$ to be $\bigcup_{b\in S}\proj(b)$.
Two bars $b$ and $b'$ in $R$ are said to {\it overlap} if $\proj(b)$ and $\proj(b')$ have an intersection of positive measure.
The {\it components} of $R$ are the sets of bars in $R$ that correspond to the components of $G(R)$.
Thus distinct components of $R$ have disjoint projections onto the $x$-axis.

Let $R$ and $R'$ be two unit bar visibility layouts.
The {\it disjoint union} of $R$ and $R'$ is the unit bar visibility layout obtained by arranging $R$ and $R'$ in the plane so that $\proj(R)\cap \proj(R')=\emptyset$ and (without loss of generality) $x_b<x_{b'}$ for all $b\in R$ and $b'\in R'$.
With this terminology, every unit bar visibility layout is the disjoint union of its components.
For convenience we label the components of $R$ as $R_1,\ldots,R_{\ell}$ so that for $i<j$ we have that $x_b<x_{b'}$ whenever $b\in R_i$ and $b'\in R_j$.

\begin{lemma}\label{lem:distincty}
Every graph $G$ has an optimal $t$-unit-bar visibility representation in which all bars have distinct $y$-coordinates.
\end{lemma}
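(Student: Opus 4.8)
The plan is to begin with any optimal $t$-unit-bar visibility representation $R$ of $G$ (one exists because $ub(G)\le \Delta(G)$ is finite) and to perturb the heights of the bars by tiny, pairwise-generic amounts while leaving every horizontal coordinate $x_b$ untouched. Concretely, if the distinct $y$-coordinates appearing in $R$ are $y^{(1)}<\cdots<y^{(k)}$, I would let $\gamma$ be the smallest gap $y^{(i+1)}-y^{(i)}$ (any positive number if $k=1$) and replace each left endpoint $(x_b,y_b)$ by $(x_b,y_b+\delta_b)$, where the $\delta_b$ satisfy $|\delta_b|<\gamma/2$ and are chosen so that all of the resulting values $y_b+\delta_b$ are distinct. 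Such a choice exists because avoiding the finitely many coincidences $y_b+\delta_b=y_{b'}+\delta_{b'}$ excludes only a measure-zero set of tuples $(\delta_b)$. Call the resulting layout $R'$. Since horizontal positions are unchanged, the partition of the bars into vertices, the bar lengths, and the horizontal disjointness are inherited from $R$; moreover, because all $y$-coordinates of $R'$ are now distinct, no two bars of $R'$ intersect. Thus $R'$ is a valid $t$-unit-bar layout with distinct heights, and it remains only to prove that $G(R')=G$.

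The engine of the argument is a single observation: two bars at the same height in $R$ have disjoint horizontal projections (otherwise the two unit segments would intersect, which is forbidden). Consequently, if $w$ shares a level with a bar $b$, then $\proj(w)\cap\proj(b)=\emptyset$, so $w$ meets no subinterval of $\proj(b)$ and therefore cannot block a vertical channel emanating from $b$. The bound $|\delta_b|<\gamma/2$ guarantees that the strict vertical order between bars on \emph{different} levels of $R$ is preserved in $R'$: if $y_b<y_{b'}$ are distinct levels, then $y_b+\delta_b<y_{b'}+\delta_{b'}$, since the gap $y_{b'}-y_b\ge\gamma$ dominates $|\delta_{b'}-\delta_b|<\gamma$.

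With these two facts I would verify $G(R')=G$ in the two natural directions. For \emph{no edge destroyed}: if $u$ and $v$ are adjacent in $G$, fix an interval $I\subseteq\proj(u)\cap\proj(v)$ witnessing an unobstructed channel in $R$, with $y_u<y_v$; a bar $w$ obstructing this channel in $R'$ would satisfy $y_u+\delta_u<y_w+\delta_w<y_v+\delta_v$ and meet $I$, and checking the cases $y_w<y_u$, $y_w=y_u$, $y_u<y_w<y_v$, $y_w=y_v$, $y_w>y_v$ rules out each (the strict cases by order preservation, the tie cases by horizontal disjointness), so the channel over $I$ survives. For \emph{no edge created}: if $u,v$ are nonadjacent, then either their projections are disjoint (and remain so), or every candidate subinterval of $\proj(u)\cap\proj(v)$ is blocked in $R$ by a bar strictly between the levels of $u$ and $v$; such a blocker remains strictly between them in $R'$ and still meets the same subinterval, so no channel opens. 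This case analysis---especially the boundary subcases where a potential blocker shares a level with $u$ or $v$---is the main obstacle, and it is exactly where the horizontal-disjointness observation does the work. Since $t=ub(G)$, the representation $R'$ is optimal, completing the proof.
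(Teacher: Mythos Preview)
Your argument is correct and uses the same underlying idea as the paper---a small vertical perturbation that leaves all horizontal data, and hence all visibilities, intact. The paper packages it differently: it takes an optimal representation with the fewest pairs of bars sharing a height, assumes some pair $b,b'$ still collides, finds $\epsilon>0$ so that no bar has height in $(y_b-\epsilon,y_b)\cup(y_b,y_b+\epsilon)$, raises $b$ alone by $\epsilon/2$, and asserts (without your detailed case analysis) that the resulting layout still represents $G$, contradicting minimality. Your simultaneous perturbation with the explicit $\gamma/2$ bound and the careful ``no edge destroyed / no edge created'' verification spells out exactly the step the paper declares ``clear''; the paper's one-bar-at-a-time extremal framing buys brevity at the cost of that detail.
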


\begin{proof}
Let $R$ be an optimal representation of $G$ in which there are the fewest pairs of bars that share their $y$-coordinate.
Suppose $b$ and $b'$ are two bars that share their $y$-coordinate.
Since $G$ is finite, there exists $\epsilon>0$ such that no bar has its $y$-coordinate in $(y_b-\epsilon,y_b)\cup(y_b,y_b+\epsilon)$.
Increase the $y$-coordinate of $b$ by $\epsilon/2$ to obtain the $t$-unit-bar visibility layout $R'$.
It is clear that $R'$ is also a $t$-unit bar visibility representation of $G$, contradicting the minimality of $R$.
\end{proof}

\section{Trees}\label{trees}

A tree that is a unit bar visibility graph is a {\it unit bar visibility tree}; we abbreviate unit bar visibility tree as UBVT.
In~\cite{DV}, Dean and Veytsel characterized unit bar visibility trees.
A {\it caterpillar} is a tree in which all vertices with degree at least $2$ lie on a single path.

\begin{theorem}\label{UBVG trees}(Dean-Veytsel [2003])
A tree $T$ is a UBVT if and only if $\Delta(T)\le 3$ and $T$ is a subdivision of a caterpillar.
\end{theorem}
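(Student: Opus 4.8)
The plan is to prove both implications, with the forward (necessity) direction driven by a single lemma about vertical columns in a layout.

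\medskip
\noindent\emph{Setup and the column lemma.} Fix an optimal representation $R$ of $T$, with distinct $y$-coordinates by Lemma~\ref{lem:distincty}. For $x_0\in\mathbb{R}$ call the vertical line $\ell\st x=x_0$ a \emph{column}, and say a bar \emph{meets} it if $x_0\in\proj(b)$. The key observation is: if $t_1,\dots,t_r$ are the bars meeting a fixed column, listed by increasing height, then consecutive $t_j,t_{j+1}$ are adjacent in $G(R)$, since the thin strip around $x_0$ between them is unobstructed (any bar covering it would itself meet the column and lie strictly between them, contradicting consecutiveness). Hence any two bars meeting a common column are joined in $G(R)$ by a path all of whose bars meet that column; as $G(R)=T$ is a tree, this is \emph{the} $T$-path between them. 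This converts geometric overlaps into tree-paths.

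\medskip
\noindent\emph{The bound $\Delta(T)\le 3$.} Let $v$ have bar $b=[x_v,x_v+1]$; every neighbor's bar meets $\proj(b)$. Classify each neighbor by whether it lies above or below $b$ and by whether its left endpoint is $<x_v$, $=x_v$, or $>x_v$ (Left, Aligned, Right), giving six slots $La,Ra,Aa$ (above) and $Lb,Rb,Ab$ (below). Since $N(v)$ is independent, no two neighbors may see each other, and I use the column lemma to decide which slot-pairs are incompatible: two neighbors on the \emph{same} side with overlapping projections are joined by a column-path lying strictly above (or below) $v$, hence missing $v$ --- a cycle; two neighbors on \emph{opposite} sides whose projections overlap \emph{outside} $\proj(b)$ are likewise joined by a column-path missing $v$; but if opposite-side neighbors overlap only inside $\proj(b)$, every such column-path runs through $b$, and no contradiction arises. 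A short computation with the overhang types shows the forbidden pairs are exactly $La$--$Aa$, $Ra$--$Aa$, $Lb$--$Ab$, $Rb$--$Ab$, $La$--$Lb$, $Ra$--$Rb$ (and each slot is usable at most once). These form the $6$-cycle $La\,Aa\,Ra\,Rb\,Ab\,Lb$, whose independence number is $3$; hence $d(v)\le 3$, i.e.\ $\Delta(T)\le 3$.

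\medskip
\noindent\emph{Subdivision of a caterpillar.} I first reformulate: a tree with $\Delta\le 3$ is a subdivision of a caterpillar iff all its degree-$3$ vertices lie on a common path (take that path, suppress degree-$2$ vertices, contract pendant paths to leaves). The size-$3$ independent sets of the $6$-cycle are precisely $\{La,Ra,Ab\}$ and its mirror $\{Aa,Lb,Rb\}$, so every degree-$3$ vertex splays into two neighbors on one vertical side and has a single \emph{aligned} neighbor (identical projection) on the other. Suppose, for contradiction, the degree-$3$ vertices do not lie on a path; then some degree-$3$ vertex $m$ has all three components of $T-m$ containing a degree-$3$ vertex. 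Reading off $m$'s slots (say $\{La,Ra,Ab\}$), two branches leave $m$ upward overhanging to opposite sides and the third leaves downward in $m$'s own column. The plan is to track these branches and exhibit two bars from \emph{different} branches meeting a common column \emph{disjoint} from $\proj(m)$ (an overlap in a half-plane left of, right of, or below $m$ that $m$ cannot screen); the column lemma then gives a $T$-path between the branches avoiding $m$, contradicting that $m$ separates them. I expect this to be the main obstacle: a single branch can be drawn as narrowly as a caterpillar's leg, so forcing the collision must exploit that \emph{all three} branches carry degree-$3$ vertices, each of which again splays and demands horizontal room --- most plausibly via an extremal choice of $m$ together with an induction that peels the topmost bar of the layout (it has degree $\le 2$ by the slot analysis, and its deletion creates no new visibilities, nothing lying above it).

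\medskip
\noindent\emph{Sufficiency.} Conversely, let $T$ have $\Delta\le 3$ with spine $P=p_1\cdots p_k$ (a path through all degree-$3$ vertices, extended so its ends are leaves); then each $p_i$ carries at most one pendant path (``leg''). Place $p_i$ with left endpoint $x=si$ for a fixed $s\in(\tfrac12,1)$, so that exactly \emph{consecutive} spine bars have overlapping projections; since overlap is necessary for visibility, nonconsecutive spine bars never interact, while consecutive ones see each other through their private overlap strip for \emph{any} heights. Choose heights making each leg-bearing $p_i$ a strict local extremum --- a peak if its leg will hang upward, a valley if downward --- alternating direction along any run of consecutive leg-bearing vertices (always possible, as consecutive spine vertices cannot both be peaks). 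Finally hang each leg as a monotone vertical stack sharing its root's projection, on the root's free side. Then each root screens its leg from both spine neighbors (these lie on the opposite vertical side and the root covers the relevant overlap strips), legs on nonconsecutive roots have disjoint projections, and the two legs of a consecutive pair of roots are separated by the two root bars in their shared strip; a routine check confirms the only adjacencies are the edges of $T$, so $R$ represents $T$.
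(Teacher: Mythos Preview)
The paper does not prove this theorem: it is quoted from Dean and Veytsel~\cite{DV} and used as a black box. So there is no ``paper's proof'' to compare against, and your attempt must stand on its own.

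Your degree bound $\Delta(T)\le 3$ via the slot analysis is essentially sound (modulo the easy observation that the channel in your column lemma needs positive width, which holds because two unit bars both containing an interior column point overlap in a genuine interval). The sufficiency construction is also fine; the shielding claims you call ``routine'' do check out.

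The genuine gap is the caterpillar-subdivision half of necessity. You correctly reduce to finding a degree-$3$ vertex $m$ whose three branches each contain a degree-$3$ vertex, and you correctly observe that $m$'s neighbors sit in one of the two alternating triples. But from there you only \emph{describe a plan}: track the three branches, hope that the splaying forced by their interior degree-$3$ vertices makes two of them collide in a column that $m$ cannot screen, and invoke the column lemma. You explicitly flag this as ``the main obstacle'' and gesture at an extremal choice of $m$ plus an induction peeling the topmost bar, but you do not carry out either. The difficulty is real: a single branch, even one containing a degree-$3$ vertex, can be drawn entirely inside a strip as narrow as you like (a $Y$ fits in any vertical strip), so the collision cannot come from a single branch needing room --- it must come from the interaction of the three branches with each other and with $m$, and you have not shown how. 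In particular, the aligned branch $Ab$ starts with $m$'s projection but is thereafter unconstrained, and the two upward branches $La,Ra$ can be made to stay inside $\proj(m)$ indefinitely; the contradiction has to be extracted from the specific combinatorics of where their internal degree-$3$ vertices force overhangs, and that argument is simply absent. Until this is supplied, the necessity direction is incomplete.
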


We present a linear time algorithm that determines the unit bar visibility number of trees.
With a slight modification, this algorithm will generate an optimal $t$-unit-bar visibility representation of a tree.

A {\it unit bar visibility forest} is a graph in which every component is a UBVT.
Given a graph $G$, let the {\it unit bar visibility arboricity} of $G$, denoted $\Upsilon_{ub}(G)$, be the minimum number of unit bar visibility forests needed to decompose $G$.
Note that a decomposition of a graph into $k$ forests gives a decomposition of $G$ into trees such that no vertex is in more than $k$ of those trees.
If $T$ is a tree, then a decomposition of $T$ into trees such that no vertex is in more than $k$ of those trees also yields a decomposition of a $T$ into at most $k$ forests.
A decomposition of a tree $T$ into UBVTs such that no vertex is in more than $\Upsilon_{ub}(T)$ of the UBVTs will be called an {\it optimal} decomposition.
As a first step towards giving an algorithm that determines $ub(T)$ for all trees $T$, we prove that for all trees the unit bar visibility number and unit bar visibility arboricity are equal.

\begin{lemma}\label{lem:walktopath}
Let $T$ be a tree, and let $R$ be a unit bar visibility layout.
Let $f:R\to V(T)$ be a labeling of the bars in $R$ that induces a homomorphism from $G(R)$ to $T$.
If $u,v\in V(T)$ and bars assigned to $u$ and $v$ lie in some component $R'$ of $R$, then for each edge $xy$ on the unique $u,v$-path in $T$ there are bars assigned to $x$ and $y$ in $R'$ that see each other.
\end{lemma}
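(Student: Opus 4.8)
The plan is to convert the geometric ``sees'' relation inside the component $R'$ into a walk in $T$ and then exploit the fact that $T$ is a tree. First I would observe that, because $R'$ is a component of $R$, the restriction of $G(R)$ to the bars of $R'$ is connected. Fix a bar $b_u$ assigned to $u$ and a bar $b_v$ assigned to $v$, both lying in $R'$ (these exist by hypothesis). By connectivity there is a path $b_u = b_0, b_1, \ldots, b_k = b_v$ among the bars of $R'$, where each consecutive pair $b_i, b_{i+1}$ sees the next.

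Next I would push this path forward through the labeling. Since $f$ induces a homomorphism from $G(R)$ to $T$ and $b_ib_{i+1}$ is an edge of $G(R')$ (hence of $G(R)$), the pair $f(b_i)f(b_{i+1})$ is an edge of $T$; in particular $f(b_i)\ne f(b_{i+1})$ because $T$ is loopless. Thus $f(b_0)=u,\, f(b_1),\, \ldots,\, f(b_k)=v$ is a genuine walk from $u$ to $v$ in $T$.

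The crux is then a standard property of trees: every $u,v$-walk in a tree traverses every edge of the unique $u,v$-path. Indeed, deleting an edge $xy$ of that path splits $T$ into two subtrees, one containing $u$ and one containing $v$, and $xy$ is the only edge joining them; any $u,v$-walk must therefore use $xy$. Applying this to the walk $f(b_0),\ldots,f(b_k)$, for each edge $xy$ of the $u,v$-path there is an index $i$ with $\{f(b_i),f(b_{i+1})\}=\{x,y\}$. The bars $b_i,b_{i+1}$ lie in $R'$ and see each other, which is exactly the desired pair of bars assigned to $x$ and $y$.

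The argument is essentially a translation lemma, so I do not expect a serious obstacle; the substance lies entirely in the tree fact. The two points deserving care are verifying that the homomorphism produces a legitimate walk rather than a degenerate one (handled by looplessness of $T$), and spelling out the ``unique edge across the cut'' observation, since this is precisely where the hypothesis that $T$ is a tree is used and where the conclusion would fail for a general graph.
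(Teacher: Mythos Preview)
Your argument is correct and is essentially the paper's own proof: pick bars for $u$ and $v$ in $R'$, use connectivity of $G(R')$ to get a sequence of mutually-visible bars, push forward via $f$ to a $u,v$-walk in $T$, and observe that such a walk must traverse every edge of the unique $u,v$-path. You have merely spelled out the tree-cut justification that the paper leaves implicit.
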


\begin{proof}
Let $b(u)$ and $b(v)$ be bars in $R'$ that are assigned to $u$ and $v$, respectively.
Since $G(R')$ is connected, there is a sequence of bars $b(u)=b_0,b_1,\ldots,b_\ell=b(v)$ in $R$ such that $b_i$ and $b_{i-1}$ see each other for all $i\in [\ell]$.
Every edge on the unique $u,v$-path in $T$ must lie in the walk $f(b_0),f(b_1),\ldots,f(b_\ell)$.
\end{proof}

\begin{theorem}\label{thm:treesarboricity}
If $T$ is a tree, then $ub(T)=\Upsilon_{ub}(T)$.
\end{theorem}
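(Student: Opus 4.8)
The plan is to prove the two inequalities $ub(T)\le \Upsilon_{ub}(T)$ and $\Upsilon_{ub}(T)\le ub(T)$ separately. Throughout I use the reformulation recorded just before the statement: for a tree, $\Upsilon_{ub}(T)$ equals the least $k$ for which $T$ admits a decomposition into UBVTs in which no vertex lies in more than $k$ of the trees.

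For $ub(T)\le \Upsilon_{ub}(T)$, I would start from an optimal decomposition $T_1,\dots,T_m$ of $T$ into UBVTs with $k=\Upsilon_{ub}(T)$, so that each vertex lies in at most $k$ of the $T_i$. Since each $T_i$ is a UBVT, it has a single-bar unit bar visibility representation $R^{(i)}$. I would form the disjoint union $R$ of $R^{(1)},\dots,R^{(m)}$, placing the $R^{(i)}$ side by side with pairwise disjoint projections onto the $x$-axis, and assign to each vertex $v$ the bars representing $v$ in those $R^{(i)}$ that contain $v$; this is a set of at most $k$ bars. Because distinct $R^{(i)}$ have disjoint projections, no channel joins bars from different pieces, so the visibilities in $R$ are exactly the union of the visibilities inside the $R^{(i)}$, which is exactly $E(T)$ since the $T_i$ decompose $T$. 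Hence $R$ is a $k$-unit-bar visibility representation of $T$ and $ub(T)\le k$.

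For the reverse inequality I would begin with an optimal representation $R$ of $T$ with $t=ub(T)$; by Lemma \ref{lem:distincty} I may assume all bars have distinct $y$-coordinates, and I write $f$ for the map sending each bar to the vertex it represents. For every edge $uv\in E(T)$ I would choose one pair of bars, one at $u$ and one at $v$, that see each other, and let $B$ be the graph on the bar set whose edges are exactly these chosen pairs. Then $f$ restricts to a bijection $E(B)\to E(T)$, and since $T$ is acyclic this forces $B$ to be a forest: a cycle in $B$ would map under $f$ to a closed walk in $T$ using distinct edges. For the same reason each component $B_c$ of $B$ meets each fiber $f^{-1}(v)$ at most once, so $f$ carries $B_c$ isomorphically onto a subtree $T_c=f(B_c)$ of $T$, and the $T_c$ form an edge-decomposition of $T$. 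Moreover a vertex $v$ lies in $T_c$ only when some bar at $v$ belongs to $B_c$, so $v$ lies in at most $t$ of the trees $T_c$. Lemma \ref{lem:walktopath} guarantees that each $T_c$ lives inside a single component of $R$ and that all of its edges are realized there, and a convexity argument inside $T$ (a connected subgraph of a tree contains the path between any two of its vertices) shows that the only visibilities occurring among the bars of $B_c$ in $R$ are the edges of $B_c$ themselves.

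The main obstacle is the final step: certifying that each subtree $T_c$ is actually a UBVT, equivalently (by Theorem \ref{UBVG trees}) that $\Delta(T_c)\le 3$ and that $T_c$ is a subdivision of a caterpillar. The difficulty is that deleting the remaining bars of $R$ in order to isolate the bars of $B_c$ can uncover new visibilities, so $R$ does not simply restrict to a representation of $T_c$; one must instead read the caterpillar backbone and the degree bound off the geometry of $R$ itself. Here the unit-length hypothesis is essential: two bars can see each other only if their projections overlap, that is, only if their left endpoints differ by less than $1$, which makes the overlap relation a unit interval structure and imposes a linear ordering on the bars of $B_c$ that I expect to yield the caterpillar backbone. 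The bound $\Delta(T_c)\le 3$ should reduce to the geometric fact that a single unit bar cannot see four pairwise non-seeing bars; controlling this may require first normalizing $R$, or rerouting some edges of $B$ among the at most $t$ bars of a vertex, so that no single bar is forced to be the sole realizer of too many edges at a vertex. Establishing this rigidity, and hence that every $T_c$ is a UBVT, is the crux of the argument; once it is in hand the decomposition $\{T_c\}$ witnesses $\Upsilon_{ub}(T)\le t$, and combining the two inequalities gives $ub(T)=\Upsilon_{ub}(T)$.
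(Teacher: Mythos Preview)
Your easy direction $ub(T)\le\Upsilon_{ub}(T)$ is fine and identical to the paper's. The hard direction, however, has a genuine gap that you yourself identify: you never establish that each $T_c$ is a UBVT, and your proposed route to this (reading off $\Delta(T_c)\le 3$ and a caterpillar backbone from the unit-interval overlap structure) is only a sketch. The problem is structural: in your setup you make an \emph{arbitrary} choice of one visibility per edge of $T$, and nothing prevents a single bar $b$ of some vertex $v$ from being the chosen endpoint for four or more incident edges, so $T_c$ can have $\Delta(T_c)\ge 4$. Your suggestion of ``rerouting some edges of $B$'' to avoid this is exactly the hard content of the theorem; making it work would require proving that the edges at each vertex can be distributed among its bars so that every resulting piece is a caterpillar subdivision with $\Delta\le 3$, and there is no obvious local rule that achieves this.

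The paper sidesteps the whole issue with a different idea. Rather than fixing an optimal representation $R$ and trying to carve a UBVT decomposition out of it, the paper chooses $R$ extremally: among optimal representations with distinct $y$-coordinates, take one that minimizes the number of pairs of bars assigned to the same vertex and lying in the same component of $R$. If this minimum is zero, each component $R_i$ carries distinct vertex labels, so $G(R_i)$ is literally an induced subtree of $T$ and $R_i$ is already a single-bar unit representation of it; the UBVT property is then automatic and no appeal to Theorem~\ref{UBVG trees} is needed. The work is to show the minimum is zero: given two bars $b(v),b'(v)$ in the same component, the paper slices $R$ by a horizontal line between their heights, forms an auxiliary bipartite interval graph on the resulting pieces, deletes one edge of this tree on the path from the piece containing $b(v)$ to that containing $b'(v)$, and slides one side away. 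Lemma~\ref{lem:walktopath} is then used to check that every edge of $T$ survives in the new layout. This surgery reduces the count of bad pairs, contradicting minimality. The upshot is that the paper modifies the \emph{representation} until its components are the desired decomposition, whereas you try to extract a decomposition from an unmodified representation, which appears substantially harder.
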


\begin{proof}
First suppose that $\Upsilon_{ub}(T)=k$, and let $\{T_1,\ldots,T_{\ell}\}$ be a decomposition of $T$ into UBVTs such that each vertex in $T$ is in at most $k$ trees in the decomposition.
The disjoint union of unit bar visibility representations of $T_1,\ldots,T_{\ell}$ is a $k$-unit-bar visibility representation of $T$, and therefore $ub(T)\le \Upsilon_{ub}(T)$.

We now show that $\Upsilon_{ub}(T)\le ub(T)$.
By Lemma~\ref{lem:distincty}, we know that there is an optimal $t$-unit-bar visibility representation $R$ of $T$ such that every bar in $R$ has a distinct $y$-coordinate.
Among all such representations of $T$, let $R$ be chosen so that it has the minimum number of pairs of bars $\{b,b'\}$ such that $b$ and $b'$ correspond to the same vertex and lie in the same component of $R$.

For each $v\in V(T)$, let $b_1(v),\ldots,b_t(v)$ be the $t$ bars assigned to $v$.
First assume that $b_i(v)$ and $b_j(v)$ are in distinct components of $R$ for all $v\in V(T)$ and all $i,j\in [t]$ such that $i\neq j$.
Let $R_1$ be the first component of $R$.
Since the labels on the bars in $R_1$ are distinct, the labels induce an isomorphism from $G(R_1)$ to the subgraph of $T$ induced by the vertices assigned to the bars in $R_1$.
Therefore $G(R_1)$ is a tree.
Removing $R_1$ from $R$ and applying induction then shows that $R$ is the disjoint union of unit bar visibility representations of trees.
Therefore $\Upsilon_{ub}(T)\le t$.

Now let $\{b(v),b'(v)\}$ be a pair of bars that are assigned to the same vertex and lie in the same component of $R$.
Without loss of generality, assume that $y_{b(v)}<y_{b'(v)}$.
Fix a value $z$ such that $y_{b(v)}<z<y_{b'(v)}$ and $z$ is not the $y$-coordinate of any bar in $R$.
Place the horizontal line $y=z$ through $R$.
This immediately partitions $R$ into two sets: $A$, the set of bars above the line $y=z$; and $B$, the set of bars below the line $y=z$.
Further partition $A$ into $A_1,\ldots,A_\ell$, the components of the unit bar visibility layout consisting of just the bars in $A$.
Similarly, partition $B$ into $B_1,\ldots,B_{\ell'}$, the components of the layout consisting of just the bars in $B$.
Observe that $\mathcal P=\{A_1,\ldots,A_\ell,B_1,\ldots,B_{\ell'}\}$ is partition of $R$ (see the top left picture in Figure~\ref{fig1}).

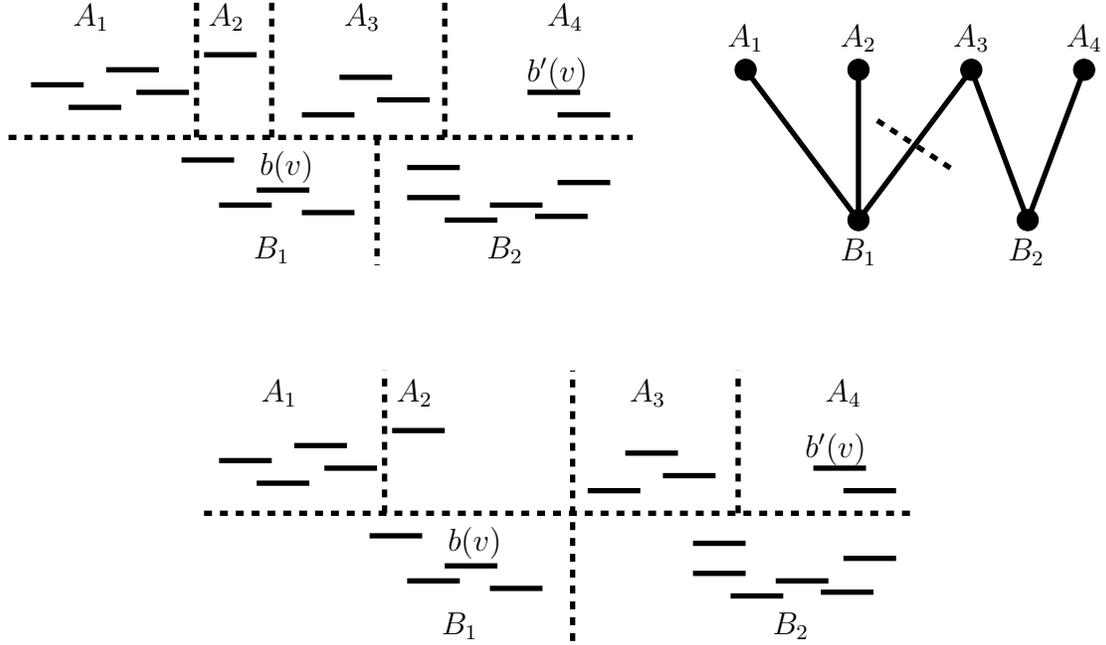
\begin{figure}
\centering
\begin{tikzpicture}
\draw[dashed, line width=2pt] (-.3,.1)--(8,.1);
\draw[dashed, line width=2pt] (2.2,.1)--(2.2,2);
\draw[dashed, line width=2pt] (3.2,.1)--(3.2,2);
\draw[dashed, line width=2pt] (4.6,.1)--(4.6,-1.6);
\draw[dashed, line width=2pt] (5.5,.1)--(5.5,2);
\draw[line width=2pt] (.5,.5)--(1.2,.5);
\draw[line width=2pt] (1.4,.7)--(2.1,.7);
\draw[line width=2pt] (0,.8)--(.7,.8);
\draw[line width=2pt] (1,1)--(1.7,1);
\node at (.8,1.7) {$A_1$};
\draw[line width=2pt] (2.3,1.2)--(3,1.2);
\node at (2.6,1.7) {$A_2$};
\draw[line width=2pt] (2,-.2)--(2.7,-.2);
\draw[line width=2pt] (2.5,-.8)--(3.2,-.8);
\draw[line width=2pt] (3,-.6)--(3.7,-.6);
\node [above] at (3.4,-.65) {$b(v)$};
\draw[line width=2pt] (3.6,-.9)--(4.3,-.9);
\node at (3.2,-1.4) {$B_1$};
\draw[line width=2pt] (3.6,.4)--(4.3,.4);
\draw[line width=2pt] (4.1,.9)--(4.8,.9);
\draw[line width=2pt] (4.6,.6)--(5.3,.6);
\node at (4.4,1.7) {$A_3$};
\draw[line width=2pt] (5,-.3)--(5.7,-.3);
\draw[line width=2pt] (5,-.7)--(5.7,-.7);
\draw[line width=2pt] (5.5,-1)--(6.2,-1);
\draw[line width=2pt] (6.1,-.8)--(6.8,-.8);
\draw[line width=2pt] (6.7,-.95)--(7.4,-.95);
\draw[line width=2pt] (7,-.5)--(7.7,-.5);
\node at (6.3,-1.4) {$B_2$};
\draw[line width=2pt] (6.6,.7)--(7.3,.7);
\draw[line width=2pt] (7,.4)--(7.7,.4);
\node [above] at (7,.6) {$b'(v)$};
\node at (7.1,1.7) {$A_4$};
\draw[fill] (9.5,1) circle (4pt);
\node [above] at (9.5,1.1) {$A_1$};
\draw[fill] (11,1) circle (4pt);
\node [above] at (11,1.1) {$A_2$};
\draw[fill] (12.5,1) circle (4pt);
\node [above] at (12.5,1.1) {$A_3$};
\draw[fill] (14,1) circle (4pt);
\node [above] at (14,1.1) {$A_4$};
\draw[fill] (11,-1) circle (4pt);
\node [below] at (11,-1.1) {$B_1$};
\draw[fill] (13.25,-1) circle (4pt);
\node [below] at (13.23,-1.1) {$B_2$};
\draw[line width=2pt] (9.5,1)--(11,-1)--(11,1);
\draw[line width=2pt] (11,-1)--(12.5,1)--(13.25,-1)--(14,1);
\draw[dashed, line width=2pt] (11.25,.325)--(12.25,-.325);
\draw[line width=2pt] (3.,-4.5)--(3.7,-4.5);
\draw[line width=2pt] (3.9,-4.3)--(4.6,-4.3);
\draw[line width=2pt] (2.5,-4.2)--(3.2,-4.2);
\draw[line width=2pt] (3.5,-4)--(4.2,-4);
\node at (3.3,-3.3) {$A_1$};
\draw[line width=2pt] (4.8,-3.8)--(5.5,-3.8);
\node at (5.1,-3.3) {$A_2$};
\draw[line width=2pt] (4.5,-5.2)--(5.2,-5.2);
\draw[line width=2pt] (5.0,-5.8)--(5.7,-5.8);
\draw[line width=2pt] (5.5,-5.6)--(6.2,-5.6);
\node [above] at (5.9,-5.65) {$b(v)$};
\draw[line width=2pt] (6.1,-5.9)--(6.8,-5.9);
\node at (5.7,-6.4) {$B_1$};
\draw[line width=2pt] (7.4,-4.6)--(8.1,-4.6);
\draw[line width=2pt] (7.9,-4.1)--(8.6,-4.1);
\draw[line width=2pt] (8.4,-4.4)--(9.1,-4.4);
\node at (8.2,-3.3) {$A_3$};
\draw[line width=2pt] (8.8,-5.3)--(9.5,-5.3);
\draw[line width=2pt] (8.8,-5.7)--(9.5,-5.7);
\draw[line width=2pt] (9.3,-6)--(10,-6);
\draw[line width=2pt] (9.9,-5.8)--(10.6,-5.8);
\draw[line width=2pt] (10.5,-5.95)--(11.2,-5.95);
\draw[line width=2pt] (10.8,-5.5)--(11.5,-5.5);
\node at (10.1,-6.4) {$B_2$};
\draw[line width=2pt] (10.4,-4.3)--(11.1,-4.3);
\node [above] at (10.7,.-4.4) {$b'(v)$};
\draw[line width=2pt] (10.8,-4.6)--(11.5,-4.6);
\node at (10.8,-3.3) {$A_4$};
\draw[dashed, line width=2pt] (2.3,-4.9)--(11.7,-4.9);
\draw[dashed, line width=2pt] (4.7,-4.9)--(4.7,-3);
\draw[dashed, line width=2pt] (7.2,-6.6)--(7.2,-3);
\draw[dashed, line width=2pt] (9.4,-4.9)--(9.4,-3);

\end{tikzpicture}
\caption{Top left: A representation of a tree $T$ with two bars assigned to $v$ in the same component, along with a partition of the representation.
Top right: The auxiliary bipartite graph.
Bottom: A new representation of $T$ with $b(v)$ and $b'(v)$ in different components.}\label{fig1}
\end{figure}

Form an auxiliary graph $H$ with $V(H)=\mathcal P$ where two elements of $\mathcal P$ are adjacent if bars from those elements can see each other in $R$ (see the top right picture in Figure~\ref{fig1}).
It follows that $H$ is a bipartite interval graph.
Hence $H$ is a forest with a bipartition corresponding to the partition of $R$ into $A$ and $B$.
Let $B_i$ be the set that contains $b(v)$ and let $A_{i'}$ be the set that contains $b'(v)$.
Let $e=B_iA_{j}$ be the first edge on the path in $H$ from $B_i$ to $A_{i'}$.
Let $R'$ be the union of the elements of $\mathcal P$ in the component of $H-e$ that contains $A_{i'}$.
Finally, let $S$ be the $t$-unit-bar visibility layout obtained from the disjoint union of $R\setminus R'$ and $R'$.
You can picture $S$ as being obtained by ``sliding" the bars in $R'$ to the right of all other bars in $R$ (see the bottom picture in Figure~\ref{fig1}).

We claim that $S$ is a $t$-unit-bar visibility representation of $T$, contradicting the minimality of $R$ since $b(v)$ and $b'(v)$ are no longer in the same component.
It is clear that there are no bars that see each other in $S$ that did not see each other in $R$.
Similarly, there are no pairs of bars assigned to the same vertex lying in the same component of $S$ that do not also lie in the same component of $R$.
Therefore it remains to show that every edge in $T$ is a visibility in $S$.
If two bars $b(w)$ and $b(w')$ can see each other in $R$ but cannot see each other in $S$, then without loss of generality we can assume that $b(w)\in B_i$ and $b(w')\in A_{j}$.
Both $b(w)$ and $b(w')$ lie in components of $S$ that contain bars corresponding to $v$.
Therefore there is a component of $S$ that contains a $w,v$-walk in $T$ and a component that contains a $w',v$-walk in $T$.
One of these walks must contain the edge $ww'$, and by Lemma~\ref{lem:walktopath}, the corresponding component of $S$ contains the edge $w'w$.
Therefore $S$ contains every edge of $T$, contradicting the minimality of $R$.
\end{proof}

An immediate corollary of Theorems~\ref{UBVG trees} and~\ref{thm:treesarboricity} is the following lower bound on the unit bar visibility number of a tree.

\begin{corollary}\label{cor:treelowerbound}
If $T$ is a tree, then $ub(T)\ge \CL{\Delta(T)/3}$.
\end{corollary}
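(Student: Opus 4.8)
The plan is to reduce the statement to a counting argument about degrees at a single vertex, using the two cited theorems as black boxes. By Theorem~\ref{thm:treesarboricity} we have $ub(T)=\Upsilon_{ub}(T)$, so it suffices to prove $\Upsilon_{ub}(T)\ge \CL{\Delta(T)/3}$. I would therefore set $k=\Upsilon_{ub}(T)$ and fix an optimal decomposition of $T$ into unit bar visibility forests $F_1,\ldots,F_k$; that is, a partition of $E(T)$ into $k$ parts, each of which is a unit bar visibility forest (a graph all of whose components are UBVTs).

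Next I would focus on a vertex $v$ of maximum degree, so that $d_T(v)=\Delta(T)$. Each edge incident to $v$ lies in exactly one of the forests $F_i$, whence $\Delta(T)=\sum_{i=1}^{k} d_{F_i}(v)$. The crux is to bound each summand. The vertex $v$ lies in a single component of $F_i$, and that component is a UBVT; by Theorem~\ref{UBVG trees} every UBVT has maximum degree at most $3$. Hence $d_{F_i}(v)\le 3$ for every $i$, and since edges incident to $v$ in $F_i$ all lie in that one component, this also bounds the full degree of $v$ in $F_i$.

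Combining these observations gives $\Delta(T)=\sum_{i=1}^k d_{F_i}(v)\le 3k$, so $k\ge \Delta(T)/3$, and since $k$ is an integer we obtain $k\ge \CL{\Delta(T)/3}$. Substituting $k=\Upsilon_{ub}(T)=ub(T)$ then yields $ub(T)\ge \CL{\Delta(T)/3}$.

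There is essentially no hard step here; the content is carried entirely by the two cited theorems, which is exactly why the statement is labeled an immediate corollary. The only point deserving a moment's care is the per-forest degree bound: one must observe that the constraint $\Delta\le 3$ in Theorem~\ref{UBVG trees} applies componentwise, so that the degree of $v$ inside the one component of $F_i$ containing it (and hence its degree in all of $F_i$) cannot exceed $3$. Everything else is arithmetic.
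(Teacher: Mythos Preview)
Your argument is correct and essentially identical to the paper's: both use Theorem~\ref{thm:treesarboricity} to pass to $\Upsilon_{ub}(T)$ and then invoke the maximum-degree-$3$ constraint from Theorem~\ref{UBVG trees} to bound the contribution of each piece at a vertex of maximum degree. The only cosmetic difference is that you phrase the decomposition as $k$ forests $F_1,\ldots,F_k$ and sum $d_{F_i}(v)\le 3$, whereas the paper phrases it as a decomposition into UBVTs and counts how many of them must contain $v$; these are equivalent by the paper's earlier remark.
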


\begin{proof}
Each UBVT has maximum degree at most $3$, and therefore a vertex in $T$ of degree $d$ must be in at least $\CL{d/3}$ elements of a decomposition of $T$ into UBVTs.
\end{proof}

We now prove our first upper bound on the unit bar visibility number of trees.

\begin{theorem}\label{thm:treebound}
If $T$ is a tree, then there is a decomposition of $T$ into UBVTs such that each  vertex $v\in V(G)$ is in at most $\CL{\frac{d(v)+1}{3}}$ elements of the decomposition.
Therefore $ub(T)\le \CL{\frac{\Delta(T)+1}{3}}$.
\end{theorem}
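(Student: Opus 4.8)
The second statement follows immediately from the first. A decomposition of $T$ into UBVTs in which each vertex $v$ lies in at most $\CL{\frac{d(v)+1}{3}}$ pieces has every vertex in at most $\CL{\frac{\Delta(T)+1}{3}}$ pieces, so it is a decomposition of $T$ into trees in which no vertex lies in more than $\CL{\frac{\Delta(T)+1}{3}}$ of them, which (as noted before Lemma~\ref{lem:walktopath}) yields a decomposition into at most $\CL{\frac{\Delta(T)+1}{3}}$ unit bar visibility forests; by Theorem~\ref{thm:treesarboricity} this gives $ub(T)=\Upsilon_{ub}(T)\le \CL{\frac{\Delta(T)+1}{3}}$. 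Hence I will concentrate on constructing the decomposition.

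The plan is to root $T$ at a leaf $r$ and build the decomposition in a single post-order (bottom-up) pass. For a vertex $v$ let $c$ be its number of children, so $d(v)=c+1$ when $v\neq r$. I process $T_v$, the subtree rooted at $v$, maintaining the invariant that the edges of $T_v$ have been partitioned into caterpillar pieces (each consisting of a spine path together with at most one pendant leg, hence a UBVT by Theorem~\ref{UBVG trees}) such that (i) $v$ lies in exactly $\CL{\frac{c+2}{3}}$ of the pieces, and (ii) one distinguished \emph{open} piece is a pure path having $v$ as an endpoint with no leg at $v$. The point of (ii) is that a leg-free path ending at $v$ can later be used by $v$'s parent either to continue a spine or to serve as a leg, and this flexibility is what makes the merges legal.

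At $v$ with children $w_1,\dots,w_c$, each $w_i$ contributes, by the invariant, an open piece that is a pure path ending at $w_i$. I would reserve one child, say $w_1$, to build the new open piece of $v$: extend its path across the edge $vw_1$ so that $v$ becomes a degree-$1$ endpoint of a pure path, preserving (ii). The remaining $c-1$ children I partition into groups of size at most $3$. A group $\{w_i,w_j,w_k\}$ is assembled into a single caterpillar whose spine runs through $v$ along the two paths coming up from $w_i$ and $w_j$ (making $v$ an internal spine vertex of degree $3$) and whose one leg is the path $v w_k$ followed by $w_k$'s path; groups of size $2$ or $1$ give a spine through $v$ or a path at $v$, with no leg. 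Each such piece has maximum degree $3$ and is a caterpillar. When the parent edge $v\,p(v)$ is later attached to $v$'s open piece it raises $v$'s degree there from $1$ to $2$ without spawning a new piece, so in the final decomposition $v$ lies in $1+\CL{\frac{c-1}{3}}=\CL{\frac{c+2}{3}}=\CL{\frac{d(v)+1}{3}}$ pieces; direct inspection covers the leaf cases ($c=0$) and the root.

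The step I expect to require the most care is verifying that every assembled piece is genuinely a UBVT, and in particular that each leg is an honest pendant \emph{path}. This is precisely why the invariant insists that open pieces be pure paths rather than arbitrary caterpillars: if a child's open piece already carried a leg, attaching it at $v$ as a leg would create a leg hanging off a leg and destroy the caterpillar-subdivision structure demanded by Theorem~\ref{UBVG trees}. Keeping ``open pieces are leg-free paths'' stable through the recursion while still routing three children's edges through most pieces (to obtain the factor of $3$) is the crux; once that is in place, the degree and counting bookkeeping indicated above is routine.
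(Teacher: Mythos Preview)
Your argument is correct. The invariant is well-posed: at a non-root vertex $v$ with $c\ge 1$ children you get $1+\CL{(c-1)/3}=\CL{(c+2)/3}$ pieces through $v$, and when $p(v)$ later absorbs $v$'s open path (either as the new open path or as a leg/spine segment of a group piece) the number of pieces through $v$ is unchanged while $v$'s degree in that piece rises only to $2$. Since each group piece is a subdivided $K_{1,3}$ (a single branch vertex at $v$, three pure-path arms), Theorem~\ref{UBVG trees} applies and every piece is a UBVT. The leaf and root boundary cases behave exactly as you indicate.

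The paper takes a different and rather shorter route: it inducts on $|V(T)|$, at each step locating a vertex $v$ whose neighbors are all leaves except one, and then either strips off a $K_{1,3}$ (if $d(v)\ge 4$), strips off a $P_3$ (if $d(v)=3$), or absorbs the lone pendant edge into the unique existing piece containing $v$ (if $d(v)=2$). No rooted traversal, no open-piece invariant, and no explicit grouping of children is needed; the work is essentially two lines per case. Interestingly, both proofs end up producing only paths and three-legged spiders as pieces, so your more elaborate machinery does not buy a stronger structural conclusion---but it does give an explicit single-pass algorithm and makes transparent from the outset that every piece has at most one branch vertex, whereas in the paper's proof that fact is implicit in the way the $d(v)=2$ case only ever appends a leaf at a leaf.
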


\begin{proof}
We proceed by induction on the number of vertices in $T$.
For the base case, we decompose $K_{1,n}$ into $\FL{n/3}$ copies of $K_{1,3}$ and, if $n$ is not divisible by $3$, one copy of $K_{1,r}$ where $1\le r\le 2$ and $r\equiv n\pmod 3$.

Now assume that $T$ is not a star and let $v$ be a vertex in $T$ with exactly one neighbor that is not a leaf.
If $d(v)\ge 4$, then let $T'$ be obtained by deleting three neighbors of $v$ that are leaves.
The addition of a copy of $K_{1,3}$ to the decomposition of $T'$ from the inductive hypothesis yields the desired decomposition of $T$.
If $d(v)=3$, then let $T'$ be obtained by deleting the two neighbors of $v$ that are leaves.
The addition of a copy of $K_{1,2}$ to the decomposition of $T'$ from the inductive hypothesis yields the desired decomposition of $T$.
If $d(v)=2$, then let $v'$ be the leaf neighbor of $v$ and let $T'=T-v'$.
By induction, $T'$ has a decomposition into UBVTs where $v$ lies in exactly one UBVT.
By adding $v'$ and the edge $vv'$ to the element of the decomposition that contains $v$, we obtain the desired decomposition of $T$.
\end{proof}

Corollary~\ref{cor:treelowerbound} and Theorem~\ref{thm:treebound} actually determine the unit bar visibility number of any tree whose maximum degree is not a multiple of $3$.
However, both Corollary~\ref{cor:treelowerbound} and Theorem~\ref{thm:treebound} can be sharp when $\Delta(T)$ is a multiple of $3$.
As a simple example, given a tree $T$ with maximum degree $3$, $ub(T)=1$ if $T$ is a subdivision of a caterpillar and $ub(T)=2$ otherwise.
While it is not difficult to show that both bounds are also sharp for larger maximum degrees that are multiples of $3$, we choose to omit such constructions for brevity.

We now present a linear-time algorithm called UNIT\textunderscore BAR\textunderscore TREE that determines if $ub(T)=\CL{\Delta(T)/3}$.
The algorithm decomposes trees into UBVTs, and in Theorem~\ref{thm:TreeAlg} we prove that each vertex is in at most $\Upsilon_{ub}(T)$ elements in the decomposition.
Therefore, by Theorem~\ref{thm:treesarboricity}, the algorithm determines $ub(T)$.

Given a tree $T$, UNIT\textunderscore BAR\textunderscore TREE$(T)$ selects a root of $T$ and performs a post-order traversal of $T$, calling two subroutines, called PRUNE and COLOR, at each vertex.
When PRUNE is called at a vertex $v$ that is not the root, a maximum of $\CL{\frac{\Delta(G)}{3}}-1$ UBVTs containing $v$ and its descendants are added to the decomposition of $T$.
When PRUNE is called at the root, up to $\CL{\frac{\Delta(G)}{3}}$ UBVTs containing the root may be added to the decomposition.
After PRUNE is called at $v$, the algorithm calls COLOR at $v$.
If $v$ is not the root, COLOR either assigns a color to the edge joining $v$ and its parent, or halts and declares $ub(T)=\CL{\frac{\Delta(G)+1}{3}}$.
If $v$ is the root, COLOR declares $ub(T)=\CL{\frac{\Delta(G)}{3}}$ or $ub(T)=\CL{\frac{\Delta(G)+1}{3}}$.
The decision to assign a particular color or halt depends complexity of the portion of the tree that remains below $v$.

We now describe the PRUNE routine in detail.
When PRUNE is called at $v$, COLOR has already been called at all children of $v$ without halting.
Consequently each edge joining $v$ to a child has been colored green, yellow, or red (as explained in the COLOR routine below).
Let $T'$ be the tree that remains when PRUNE is called at $v$.
The PRUNE routine selects subtrees of $T'$ rooted at $v$ and removes their edges and vertices (excluding $v$) from $T'$; we say that these subtrees are {\it pruned} from $T'$.
For each child $x$ of $v$ in $T'$, let $T'_{vx}$ denote the subtree of $T'$ consisting of $v$, $x$, and all descendants of $x$ in $T'$.
The choices of the subtrees rooted at $v$ are made greedily according to the following priority ordering of edge colors joining $v$ to its children (this ordering is summarized in Table~\ref{tab:prefCuts}).
In the following list, let $x$, $x'$, and $x''$ be children of $v$, though it is possible that $v$ does not have three children.
\begin{enumerate}
\item If $vx$ is red and $vx'$ is green, then prune $T'_{vx}\cup T'_{vx'}$.
\item If $vx$ is red, then prune $T'_{vx}$.
\item If $vx$ and $vx'$ are yellow and $vx''$ is green, then prune $T'_{vx}\cup T'_{vx'}\cup T'_{vx''}$.
\item If $vx$ and $vx'$ are yellow, then prune $T'_{vx}\cup T'_{vx'}$.
\item If $vx$ is yellow and $vx'$ and $vx''$ are green, then prune $T'_{vx}\cup T'_{vx'}\cup T'_{vx''}$.
\item If $vx$ is yellow and $vx'$ is green, then prune $T'_{vx}\cup T'_{vx'}$.
\item If $vx$ is yellow, then prune $T'_{vx}$.
\item If $vx$, $vx'$ and $vx''$ are green, then prune $T'_{vx}\cup T'_{vx'}\cup T'_{vx''}$.
\item If $vx$ and $vx'$ are green, then prune $T'_{vx}\cup T'_{vx'}$.
\item If $vx$ is green, then prune $T'_{vx}$.
\end{enumerate}
If $v$ is not the root of $T$, then PRUNE stops when $v$ has no remaining children or $\CL{\frac{\Delta(T)}{3}}-1$ trees have been pruned at $v$.
If $v$ is the root, then PRUNE stops when $v$ has no remaining children or $\CL{\frac{\Delta(T)}{3}}$ trees have been pruned at $v$.

\begin{table}[h]
\centering
\begin{tabular}{|r c l|}
	\hline
  Most preferred& &Least preferred \\[2pt] \hline
  RG & $\succ$ R $\succ$ YYG $\succ$ YY $\succ$ YGG $\succ$ YG $\succ$ Y $\succ$ GGG $\succ$ GG $\succ$& G
\\	\hline
\end{tabular}
\caption{\label{tab:prefCuts}Preferred pruning order in PRUNE, and ordering of color types in the proof of Theorem~\ref{thm:TreeAlg}.}
\end{table}

We now describe COLOR in detail.
First, assume that $v$ is not the root of $T$.
Let $T''$ be the subtree of $T$ that remains after PRUNE is called at $v$, and let $u$ be the parent of $v$.
The COLOR routine assigns a color to $uv$ or halts and declares $ub(T)=\CL{\frac{\Delta(G)+1}{3}}$ as follows  (summarized in Table~\ref{tab:remEdges}).

\begin{center}
\begin{tabular}{rll}

1)& Color $uv$ green (G) if&  a) $v$ has no children in $T''$, or \\&& b) $v$ has exactly one child $x$ in $T'$ and $vx$ is green.\\ &&\\
2)& Color $uv$ yellow (Y) if & a) $v$ has exactly two children $x$ and $x'$ in $T''$, \\&&\qquad \qquad and $vx$ and $vx'$ are both green, or\\
&& b) $v$ has exactly one child $x$ in $T''$ and $vx$ is yellow, or\\
&&c) $v$ has exactly two children $x$ and $x'$ in $T''$, \\&&\qquad \qquad and $vx$ is yellow and $vx'$ is green.\\
&&\\
3)& Color $uv$ red (R) if & a) $v$ has exactly two children $x$ and $x'$ in $T''$, \\&&\qquad \qquad and $vx$ and $vx'$ are both yellow, or\\
&& b) $v$ has exactly one child $x$ in $T''$ and $vx$ is red.\\
&&\\
4)& Declare $ub(T)=\CL{\frac{\Delta(G)+1}{3}}$ if & a) $v$ has at least three children in $T''$, or\\
&& b) $v$ has two children $x$ and $x'$ in $T''$ and $vx$ is red.\\
\end{tabular}
\end{center}

\begin{table}
\centering
\begin{tabular}{ |c | c| }
	\hline
  Edges to remaining children at $v$ after pruning & Action \\[2pt] \hline
  None or G & color $uv$ G \\
  GG, Y, or YG & color $uv$ Y \\
  YY or R & color $uv$ R \\
  At least three, RR, RY, or RG & declare $ub(T)=\CL{\frac{\Delta(G)+1}{3}}$\\

  	\hline
\end{tabular}
\caption{\label{tab:remEdges}Action of COLOR$(v)$, where $u$ denotes the parent of $v$.}
\end{table}
If $v$ is the root of $T$, then COLOR declares $ub(T)=\CL{\frac{\Delta(T)}{3}}$ if $v$ has no children after PRUNE runs at $v$, and declares $ub(T)=\CL{\frac{\Delta(T)+1}{3}}$ otherwise.

We now present UNIT\textunderscore BAR\textunderscore TREE(T) in pseudocode.

\begin{alg}[UNIT\textunderscore BAR\textunderscore TREE(T)]\label{alg:tree}
\qquad 

Input: A tree $T$.

Output: $ub(T)$.  If $ub(T)=\CL{\frac{\Delta(T)}{3}}$, then also a decomposition $\mathcal T$ of $T$ into UBVTs so that every vertex is in at most $\CL{\Delta(T)/3}$ trees in $\mathcal T$.

1. Initialize: Choose a vertex $r$ in $T$ and let $T_r$ be $T$ rooted at $r$. Set $\mathcal T=\emptyset$.

2. Do a postorder traversal of $T_r$.
Let $v$ be the current vertex.

\quad a. Run PRUNE$(v)$ and add each UBVT pruned at $v$ to $\mathcal T$.

\quad b. Run COLOR$(v)$.

3. Return $ub(T)$.
If $ub(T)=\CL{\Delta(T)/3}$, also return $\mathcal T$.
\end{alg}

Before proving that Algorithm~\ref{alg:tree} produces an optimal decomposition of $T$, we present a lemma that allows us to modify UBVTs rooted at the same vertex.
Given a tree $T$ with root $v$, we call each maximal subtree of $T$ that contains $v$ as a leaf a {\it branch} of $T$.
We assign a color to each branch $B$ in $T$ depending on its structure as described below.
The color that is assigned to $B$ is the same as the color that the COLOR routine would assign to the edge in $B$ that is incident to $v$.
\begin{enumerate}
\item $B$ is red (R) if $B$ contains vertices of degree $3$, all of which lie in a single path, and no such path also contains $v$ (i.e.\ $B$ is a subdivided caterpillar with maximum degree $3$ and $v$ is not on the spine);
\item $B$ is yellow (Y) if $B$ contains vertices of degree $3$, all of which lie in a single path that also contains $v$ (i.e.\ $B$ is a subdivided caterpillar with maximum degree $3$ and $v$ is on the spine);
\item $B$ is green (G) if $B$ contains no vertices of degree $3$ (i.e.\ $B$ is a path).
\end{enumerate}
The multiset of the colors of the branches of $T$ at $v$ is the {\it color-type} of $T$, denoted $c(T)$.
For convenience we suppress set notation and record each color-type as a string of Rs followed by Ys followed by Gs.
It is clear that a tree is a UBVT if and only if its color-type is RG, R, YYG, YY, YGG, YG, Y, GGG, GG, or G.
We rank these strings according to the lexicographic ordering arising from the ordering $\textrm R\succ \textrm Y\succ \textrm G$.
This ranking is shown in Table~\ref{tab:prefCuts}.

Let $T_1$ and $T_2$ be two trees with a common root $v$, and let $B_1$ and $B_2$ be branches of $T_1$ and $T_2$ respectively.
A {\it branch-swap} of $B_1$ and $B_2$ exchanges $B_1$ and $B_2$ yielding the trees $(T_1-B_1)\cup B_2$ and $(T_2-B_2)\cup B_1$.
We say that $T_1$ {\it absorbs} $B_2$ and $T_2$ {\it gives} $B_2$ if we add $B_2$ to $T_1$ yielding the trees $T_1\cup B_2$ and $T_2-B_2$.

\begin{lemma}\label{lem:treeswap}
Let $T_1$ and $T_2$ be two UBVTs that are rooted at $v$, let $B_1$ be a branch of $T_1$, and let $B_2$ be a branch of $T_2$.
\begin{enumerate}
\item If $B_1$ and $B_2$ have the same color, then the branch-swap of $B_1$ and $B_2$ yields two UBVTs.
\item If $c(T_2)\in \{\textrm{YGG},\textrm{YG},\textrm{GGG},\textrm{GG},\textrm{G}\}$, $B_1$ is yellow, and $B_2$ is green, then the branch-swap of $B_1$ and $B_2$ yields two UBVTs.
\item If $c(T_2)\in \{\textrm{GG},\textrm G\}$, $B_1$ is red, and $B_2$ is green, then the branch-swap of $B_1$ and $B_2$ yields two UBVTs.
\item If $d_{T_1}(v)\le 2$ and $B_2$ is green, then the absorption of $B_2$ by $T_1$ yields two UBVTs.
\item If $d_{T_1}(v)=1$, the only branch in $T_1$ is yellow, and $B_2$ is green or yellow, then the absorption of $B_2$ by $T_1$ yields two UBVTs.
\item Suppose that $B_1$ is red and $c(T_2)\in\{\textrm{GGG},\textrm{GG}\}$.
If $B_2$ and $B_3$ are both branches of $T_2$, then $(T_1-B_1)\cup B_2\cup B_3$ and $(T_2-(B_2\cup B_3))\cup B_1$ are both UBVTs.
\item If $B_1$ is red and $c(T_2)=\textrm{G}$, then $(T_1-B_1)\cup B_2$ and $(T_2-B_2)\cup B_1$ are both UBVTs.
\end{enumerate}
\end{lemma}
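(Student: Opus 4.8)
The plan is to reduce the entire lemma to a finite check at the level of color-types. The engine is the characterization stated just above the lemma: a tree rooted at $v$ is a UBVT if and only if its color-type lies in $\mathcal V=\{\mathrm{RG},\mathrm R,\mathrm{YYG},\mathrm{YY},\mathrm{YGG},\mathrm{YG},\mathrm Y,\mathrm{GGG},\mathrm{GG},\mathrm G\}$. The first thing I would establish is that the color of a branch is \emph{intrinsic}: directly from the definitions, the color of $B$ is determined by $B$ alone as a tree rooted at $v$ (green iff $B$ is a path; otherwise yellow or red according to whether some path through all the degree-$3$ vertices of $B$ also passes through $v$), with no reference to the sibling branches. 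Hence detaching $B$ and reattaching it at the common root of another tree leaves its color unchanged, and deleting or inserting a branch does not alter the colors of the branches that stay put. It follows that each operation in the lemma acts on color-types by an explicit multiset edit, and every conclusion becomes the assertion that the edited string again lies in $\mathcal V$.

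With the intrinsic-color fact in hand I would record the edit dictionary and run the checks. A branch-swap of a color-$c$ branch against a color-$c'$ branch replaces one $c$ by $c'$ in one color-type and one $c'$ by $c$ in the other; an absorption inserts the moved color into the absorbing type and deletes it from the giving type. Items (1)--(3) are then pure bookkeeping: in (1) both multisets are unchanged; in (2) the types admitting a yellow branch are exactly $\mathrm{YYG},\mathrm{YY},\mathrm{YGG},\mathrm{YG},\mathrm Y$, and demoting one $\mathrm Y$ to $\mathrm G$ carries these back into $\mathcal V$, while the hypothesis $c(T_2)\in\{\mathrm{YGG},\mathrm{YG},\mathrm{GGG},\mathrm{GG},\mathrm G\}$ is precisely the set on which promoting one $\mathrm G$ to $\mathrm Y$ stays in $\mathcal V$; item (3) is the analogous red/green statement with $c(T_2)\in\{\mathrm{GG},\mathrm G\}$. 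For the absorption items (4)--(7) I would, for each admissible source type, check the absorbing type against $\mathcal V$ and separately check the giving type $c(T_2)-B_2$ against $\mathcal V$; the latter is automatic, since deleting a green (or yellow) branch from any type of $\mathcal V$ yields a type of $\mathcal V$ (or the trivial one-vertex tree). Items (6) and (7) are the same arithmetic for moving a red branch out in exchange for one or two greens.

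The step I expect to require the most care is making the $d(v)$-increasing absorptions airtight, because the characterization is most delicate exactly where a third branch is added and the edit can push $v$ itself to degree $3$. The substantive content is to exhibit the global spine of each combined tree explicitly and confirm that it absorbs every degree-$3$ vertex while everything else splits into paths: a green branch absorbed next to a yellow branch must \emph{extend the spine past $v$}, and when $v$ becomes a degree-$3$ vertex one must check that exactly one branch at $v$ is a leg while the other two concatenate into the spine through $v$ (the $\mathrm{YGG}$ and $\mathrm{YYG}$ gluings), with the analogous concatenations $\mathrm{YG}$, $\mathrm{YY}$, and the leg-extension $\mathrm{RG}$ for the two-branch cases. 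The only configurations one must be careful to exclude are those that would place a branch vertex off the spine; pinning down which absorptions do so is the crux, since absorbing a green branch alongside a red branch turns $v$ into precisely such an off-spine branch vertex. Carrying out these explicit spine constructions for the finitely many admissible color-types is what certifies that the permitted operations land in $\mathcal V$ and completes the proof.
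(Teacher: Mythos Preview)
Your approach is correct and is exactly what the paper does; the paper's entire proof is the single sentence ``In all cases, it is trivial to check that the resulting trees have color-types of UBVTs.'' Your first two paragraphs spell out precisely this reduction (intrinsic branch colors, multiset edits on color-types, membership check in $\mathcal V$), which is all that is required. Your third paragraph, proposing explicit spine constructions to certify the absorptions, is more work than necessary: since the paper has already stated the characterization that a rooted tree is a UBVT if and only if its color-type lies in $\mathcal V$, you may simply invoke it rather than re-derive it for each glued tree.
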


\begin{proof}
In all cases, it is trivial to check that the resulting trees have color-types of UBVTs.
\end{proof}

\begin{theorem}~\label{thm:TreeAlg}
Given a tree $T$, UNIT\textunderscore BAR\textunderscore TREE$(T)$ determines $ub(T)$ in time linear in the order of the tree.
Furthermore, if $ub(T)=\CL{\Delta(T)/3}$, then UNIT\textunderscore BAR\textunderscore TREE$(T)$ also generates a decomposition of $T$ into UBVTs so that each vertex is in at most $\CL{\Delta(T)/3}$ of the UBVTs.
\end{theorem}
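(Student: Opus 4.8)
The plan is to let $k=\CL{\Delta(T)/3}$ and to show that the algorithm halts (declaring $ub(T)=\CL{(\Delta(T)+1)/3}$) precisely when $T$ admits no decomposition into UBVTs with every vertex in at most $k$ trees, and that otherwise it outputs such a decomposition. Since Corollary~\ref{cor:treelowerbound} gives $ub(T)\ge k$ and Theorem~\ref{thm:treebound} gives $ub(T)\le\CL{(\Delta(T)+1)/3}$, which equals $k$ unless $3\mid\Delta(T)$, this dichotomy together with Theorem~\ref{thm:treesarboricity} pins down $ub(T)$ in every case. The engine of the proof is an invariant for the post-order traversal, to be proven by induction: when COLOR assigns a color $c$ to an edge $uv$ without halting, the subtrees pruned at $v$ and its descendants are UBVTs, every descendant of $v$ lies in at most $k$ of them, $v$ lies in at most $k-1$ of them, and---crucially---$c$ is the \emph{best} (smallest in the order $\mathrm R\succ\mathrm Y\succ\mathrm G$) branch-color of $T'_{uv}$ realizable by \emph{any} decomposition of the subtree below $v$ that reserves one tree at $v$ to extend upward through $uv$.

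First I would dispatch the easy directions. Linear time is immediate: the traversal visits each vertex once, and PRUNE and COLOR at $v$ inspect only the colors of the edges from $v$ to its children, so the total work is $O\!\left(\sum_v d(v)\right)=O(|V(T)|)$ provided pruned subtrees are deleted as they are formed. For correctness when the algorithm succeeds, the priority list in Table~\ref{tab:prefCuts} combines child-branches only into the ten color-types of UBVTs, so every pruned subtree is a UBVT; this, together with the consistency of the COLOR rules with the branch-color definitions, is the routine color-type bookkeeping in the spirit of Lemma~\ref{lem:treeswap}. A non-root vertex $v$ lies in at most $k-1$ trees pruned at $v$ together with the single tree carrying the branch $T'_{uv}$ upward, for a total of at most $k$ (at the root, at most $k$ trees are pruned and nothing extends upward). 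Hence the output is a decomposition into UBVTs with at most $k$ trees per vertex, and with Corollary~\ref{cor:treelowerbound} this yields $ub(T)=k$.

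The substance of the proof is the optimality clause of the invariant, from which correctness of the halting decision follows. \textbf{The hard part} is an exchange argument showing that the greedy PRUNE is optimal at each vertex: given \emph{any} decomposition $\mathcal D$ of $T$ into UBVTs with at most $k$ trees per vertex, one can apply the branch-swaps of Lemma~\ref{lem:treeswap} to transform $\mathcal D$, vertex by vertex in post-order, into agreement with the algorithm's pruning choices without ever increasing the number of trees at any vertex. Parts (1)--(7) of Lemma~\ref{lem:treeswap} are exactly the moves needed to justify each level of the priority order in Table~\ref{tab:prefCuts}: same-colored branches may be interchanged freely, a yellow branch may be traded for a green one inside a sufficiently green tree, a red branch may be traded for greens, and low-degree trees may absorb green branches. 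I would argue case by case that, assuming by induction that every child-branch of $v$ has already been brought to its best achievable color, no rearrangement of the branches at $v$ uses fewer trees at $v$ or produces a more flexible continuing color than the greedy choice; in particular, if the greedy exhausts its $k-1$ prunes (or $k$ at the root) and is still left with three children, or with a red child alongside another child, then after the swaps $\mathcal D$ is likewise forced to place $v$ in more than $k$ trees, a contradiction. Consequently the algorithm halts if and only if no decomposition with at most $k$ trees per vertex exists, in which case Theorem~\ref{thm:treebound} forces $ub(T)=\CL{(\Delta(T)+1)/3}$.
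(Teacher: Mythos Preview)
Your proposal is correct and follows essentially the same approach as the paper: both arguments reduce correctness to an exchange argument showing that the greedy PRUNE can be matched by any optimal decomposition, using precisely the branch-swap moves of Lemma~\ref{lem:treeswap}. The paper frames this as an extremal-counterexample argument (choose an optimal decomposition $\widehat{\mathcal T}$ agreeing with the algorithm's output $T_1,\ldots,T_k$ for as many initial trees as possible, then swap branches at the first disagreement to extend the agreement), while you frame it as an inductive invariant on the post-order traversal asserting that COLOR always outputs the best achievable branch-color; these are dual formulations of the same idea, and both bottom out in the same color-type case analysis that you defer with ``case by case'' and the paper carries out explicitly in its Cases~1--3 and Table~\ref{table:Tk+1}.
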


\begin{proof}
First assume that UNIT\textunderscore BAR\textunderscore TREE$(T)$ returns $ub(T)=\CL{\Delta(T)/3}$.
In this case, UNIT\textunderscore BAR\textunderscore TREE$(T)$ also returns $\mathcal T$, a decomposition of $T$ into UBVTs.
If $v$ is a vertex that is not the root of $T$, then $\mathcal T$ contains at most $\CL{\Delta(T)/3}-1$ UBVTs that are rooted at $v$ and exactly one UBVT containing $v$ that is rooted at an ancestor of $v$.
If $v$ is the root, then $\mathcal T$ contains at most $\CL{\Delta(T)/3}$ UBVTs that contain $v$.
Therefore, if UNIT\textunderscore BAR\textunderscore TREE$(T)$ declares $ub(T)=\CL{\Delta(T)/3}$, then by Theorem~\ref{thm:treesarboricity} $\mathcal T$ serves as a certificate.

For the rest of the proof we assume that there is a tree $T$ such that UNIT\textunderscore BAR\textunderscore TREE$(T)$ returns $ub(T)=\CL{\frac{\Delta(T)+1}{3}}$ while $ub(T)=\CL{\Delta(T)/3}$.
Note that there is only a discrepancy in these two values if $\Delta(T)$ is a multiple of $3$.
Thus we assume that $\Delta(T)$ is a multiple of $3$.
We will write $\frac{\Delta(T)}{3}+1$ for $\CL{\frac{\Delta(T)+1}{3}}$ and we will suppress the ceiling notation in $\CL{\frac{\Delta(T)}{3}}$.
By Theorem~\ref{thm:treesarboricity} we may assume that $T$ has a decomposition into UBVTs so that each vertex is in at most ${\Delta(T)/3}$ of the UBVTs.
Let $\mathcal T=\{T_1,\ldots,T_{\ell}\}$ be the partial decomposition of $T$ into UBVTs when Algorithm~\ref{alg:tree} halts and declares $ub(T)=\frac{\Delta(T)}{3}+1$.
Order $\mathcal T$ so that $T_i$ is pruned before $T_j$ when $i<j$.
Let $\widehat{\mathcal T}$ be a decomposition of $T$ into UBVTs so that each vertex is in at most ${\Delta(T)/3}$ elements of $\widehat{\mathcal T}$.

First suppose that $\mathcal T\subseteq \widehat{\mathcal T}$.
Let $y$ be the vertex where Algorithm~\ref{alg:tree} halts.
If $y$ is the root of $T$, then $y$ is in ${\Delta(T)/3}$ trees in $\widehat{\mathcal T}$, and not all edges at $y$ are contained in those trees.
Hence $y$ is in at least ${\frac{\Delta(T)}{3}}+1$ elements of $\widehat{\mathcal T}$, a contradiction.
Now assume that $y$ is not the root of $T$.
After pruning, $y$ has degree at least $3$ or is still joined to a child by a red edge.
Therefore $y$ has descendants of degree $3$, and they do not all lie in a single path with $y$.
It follows that the tree consisting of $y$, its remaining descendants, and the edge joining $y$ to its parent is not a subgraph of a UBVT.
Therefore one $y$ or its descendants is in at least two elements of $\widehat{\mathcal T}$ that are not in $\mathcal T$.
Since $y$ and all of its remaining descendants are in ${\Delta(T)/3}-1$ elements of $\mathcal T$, some vertex is in at least ${\Delta(T)/3}+1$ elements of $\widehat{\mathcal T}$, a contradiction.

We now assume that $\mathcal T\not\subseteq \widehat{\mathcal T}$.
Choose $\widehat{\mathcal T}$ to maximize $k$ so that $T_i\in \widehat{\mathcal T}$ for all $i\le k$; note that $k<\ell$.
Let $v$ be the vertex where $T_{k+1}$ is pruned in Algorithm~\ref{alg:tree}.
Let $T'$ be the subtree of $T$ that remains when Algorithm~\ref{alg:tree} calls PRUNE at $v$.
Let $x_1,\ldots,x_m$ be the children of $v$, ordered so that when $i<j$ the index of the element of $\mathcal T$ containing $x_i$ is less than or equal to the index of the element of $\mathcal T$ containing $x_j$.

Let $T_{vx_i}'$ be the subtree of $T'$ consisting of $v$, $x_i$, and all descendants of $x_i$.
Let $\widehat T_{vx_i}$ be the tree in $\widehat{\mathcal T}$ that contains $vx_i$.
We claim that $T'_{vx_i}\subseteq \widehat T_{vx_i}$.
If $T'_{vx_i}\not \subseteq \widehat T_{vx_i}$, then there is a descendant $x'$ of $v$ in $\widehat T_{vx_i}$ such that some child of $x'$ from $T'$ is not in $\widehat T_{vx_i}$.
It follows that $x'$ lies in at least two elements of $\widehat{\mathcal T}-\{T_1,\ldots,T_k\}$.
Since $x'$ is a descendant of $v$, Algorithm~\ref{alg:tree} has already run PRUNE at $x'$ when it begins to run PRUNE at $v$.
Since $x'$ is not a leaf in $T'$, it follows that ${\Delta(T)/3}-1$ trees are pruned at $x'$ by Algorithm~\ref{alg:tree}.
Furthermore, the trees that are pruned at $x'$ are in the set $\{T_1,\ldots,T_k\}$ and hence are also in $\widehat{\mathcal T}$.
It follows that $x'$ lies in ${\Delta(T)/3}+1$ elements of $\widehat{\mathcal T}$, a contradiction.
Therefore, for all $i\in [m]$, $T'_{vx_i}$ is a subtree of an element of $\widehat{\mathcal T}$.

We now impose an additional extremal condition on $\widehat{\mathcal T}$.
Since $T_{k+1}$ is pruned at $v$, it follows that $T_{k+1}=T'_{vx_i}\cup\ldots\cup  T'_{vx_{i+j}}$ for some $i\in [m]$ and some $j\in \{0,1,2\}$.
Order $vx_i,\ldots, vx_{i+j}$ so that $vx_i$ has the highest priority color with respect to the order $R\succ Y\succ G$.
Among all optimal decompositions that contain $\{T_1,\ldots,T_k\}$, choose $\widehat{\mathcal T}$ so that $\widehat T_{vx_i}$, the element of $\widehat{\mathcal T}$ that contains $vx_i$, has as many branches in common with $T_{k+1}$ as possible.
We will modify $\widehat T_{vx_i}$ to obtain a new optimal decomposition of $T$ that contradicts the extremality of $\widehat{\mathcal T}$.
We proceed by cases depending on the relative color-types of $T_{k+1}$ and $\widehat T_{vx_i}$ (refer to Table~\ref{tab:prefCuts}).
The specific modifications to $\widehat T_{vx_i}$ are summarized in Table~\ref{table:Tk+1}.

\begin{table}
\centering
\begin{tabular}{cc}
&$c(\widehat T_{vx_i})$\\
&\\
$c(T_{k+1})$&
\begin{tabular}{r|cc}
	&RG			&R\\ \hline
RG	&Swap G-G	&Absorb G\\
R	&Give G		&Same 
\end{tabular}
\end{tabular}

\vspace{.35in}

\begin{tabular}{cc}
&$c(\widehat T_{vx_i})$\\
&\\
$c(T_{k+1})$&\begin{tabular}{r|ccccc}
		&YYG		&YY			&YGG		&YG			&Y\\ \hline
YYG		&Swap		&Absorb G	&Swap G-Y	&Absorb Y	&Absorb Y\\
YY		&Give G		&Swap Y-Y	&Swap G-Y	&Absorb Y	&Absorb Y\\
YGG		&Swap Y-G	&Absorb G	&Swap G-G	&Absorb G	&Absorb G\\
YG		&Give Y		&Absorb G	&Give G		&Swap G-G	&Absorb G\\
Y		&NA			&Give Y		&NA			&Give G		&Same\\
\end{tabular}
\end{tabular}

\vspace{.35in}

\begin{tabular}{cc}
&$c(\widehat T_{vx_i})$\\
&\\
$c(T_{k+1})$&
\begin{tabular}{r|cccccc}
	&RG			&YGG		&YG			&GGG		&GG			&G\\ \hline
GGG	&Swap R-GG	&Swap Y-G	&Absorb G	&Swap G-G	&Absorb G 	&Absorb G\\
GG 	&Swap R-G	&Give Y	&Swap Y-G	&Give G	&Swap G-G	&Absorb G\\
G 	&Give R	&NA			&Give Y	&NA			&Give G	&Same\\
\end{tabular}
\end{tabular}
\caption{Modification to $\widehat T_{vx_i}$.
The top table is when $vx_i$ is red, the middle is when $vx_i$ is yellow, and the bottom is when $vx_i$ is green.
``Give" indicates that a branch of $T_{vx_i}'$ either becomes its own tree or is absorbed by another tree.
``Same" indicates that $\widehat T_{vx_i}=T_{k+1}$.}\label{table:Tk+1}
\end{table}

{\bf Case 1:} {\it $c(T_{k+1})\succeq c(\widehat T_{vx_i})$.}
Because $T_{k+1}$ and $\widehat T_{vx_i}$ both contain the branch with $vx_i$, it follows that $d_{T_{k+1}}(v)\ge 2$ as otherwise $T_{k+1}=\widehat T_{vx_i}$.
In all such cases, by Lemma~\ref{lem:treeswap} $\widehat T_{vx_i}$ can either swap a branch with or absorb a branch from some element of $\widehat{\mathcal T}-\{T_1,\ldots,T_k\}$ to have more branches in common with $T_{k+1}$.
This violates the extremality of $\widehat{\mathcal T}$.

{\bf Case 2:} {\it $c(\widehat T_{vx_i})\succ c(T_{k+1})$, and $v$ has no children after $T_{k+1}$ is pruned.}
In this case, $\widehat T_{vx_i}$ must have a branch that contains the parent of $v$.
Because $T_{k+1}$ is pruned by Algorithm~\ref{alg:tree}, we conclude that at most ${\Delta(T)/3}-1$ trees are used by the algorithm for the edges joining $v$ to its children.
If $v$ is in at most ${\Delta(T)/3}-1$ trees in $\widehat{\mathcal T}$, the removing the branch from $\widehat T_{vx_i}$ that contains the parent of $v$ and using it as its own element yields an optimal decomposition.
In this decomposition, the color-type of the tree that contains $vx_i$ is lower ranked than $c(T_{k+1})$, yielding an instance of Case 1.
Otherwise $v$ is in ${\Delta(T)/3}$ trees in $\widehat{\mathcal T}$, two of which are in $\widehat{\mathcal T}-\{T_1,\ldots,T_k\}$.
It is straightforward to check that in all such cases Lemma~\ref{lem:treeswap} applies and we can perform branch swaps on $T_{vx_i}'$ and another element of $\widehat{\mathcal T}-\{T_1,\ldots,T_k\}$ that contains $v$ to obtain a decomposition that either contains $\{T_1,\ldots,T_{k+1}\}$ (violating the extremality of $\widehat{\mathcal T}$) or that is an instance of Case 1.

{\bf Case 3:} {\it $c(\widehat T_{vx_i})\succ c(T_{k+1})$, and $v$ has children after $T_{k+1}$ is pruned.}
In this case, $c(T_{k+1})\in\{\textrm{R},\textrm{YY},\textrm{YGG},\textrm{GGG}\}$.
In all of these cases, $\widehat T_{vx_i}$ has a branch $B$ that does not include $vx_i$ such that $c(T_{k+1})\succeq c(\widehat T_{vx_i}-B)$.
If $v$ is in less than ${\Delta(T)/3}$ elements of $\widehat{\mathcal T}$, then replacing $\widehat T_{vx_i}$ with $\widehat T_{vx_i}-B$ and $B$ yields an optimal decomposition of $T$ that is an instance of Case 1.
Thus we assume that $v$ is in ${\Delta(T)/3}$ elements of $\widehat{\mathcal T}$.

If $B$ is green, then $c(T_{k+1})\in\{\textrm{R},\textrm{YY}\}$.
By Algorithm~\ref{alg:tree}, all remaining branches at $v$ are red or yellow, and the green branch of $\widehat T_{vx_i}$ contains the parent of $v$.
Therefore all remaining elements of $\widehat{\mathcal T}$ that contain $v$ have color type R, YY, or Y, and by Lemma~\ref{lem:treeswap} any one of these can absorb $B$ from $\widehat T_{vx_i}$.

If $B$ is yellow, then $c(T_{k+1})\in\{\textrm{YGG},\textrm{GGG}\}$.
By Algorithm~\ref{alg:tree}, all remaining branches at $v$ are green.
In this case, by Lemma~\ref{lem:treeswap} it is possible to perform a branch-swap with $B$ and a green branch of a tree in $\widehat{\mathcal T}-\{T_1,\ldots,T_k\}$ rooted at $v$ with color type GGG, GG, or G.

If $B$ is red, then $c(T_{k+1})=\textrm{GGG}$ and $c(\widehat T_{vx_i})=\textrm{RG}$.
By Algorithm~\ref{alg:tree}, all remaining branches at $v$ are green.
Therefore all remaining elements of $\widehat{\mathcal T}$ containing $v$ have color type GGG, GG, or G.
Because these elements of $\widehat{\mathcal T}$ contain two branches of $T_{k+1}$, we can assume (using branch absorption if necessary) that one of the remaining elements of $\widehat{\mathcal T}$ that contains $v$ is of color-type GGG or GG.
Thus it is possible by Lemma~\ref{lem:treeswap} to swap $B$ for two green branches of a tree in $\widehat{\mathcal T}-\{T_1,\ldots,T_k\}$.
This completes Case 3.

We conclude that for each color of $B$ it is possible to obtain an optimal decomposition that is an instance of Case 1 or that contains $\{T_1,\ldots,T_{k+1}\}$, violating the extremality of $\widehat{\mathcal T}$.

It remains to show that UNIT\textunderscore BAR\textunderscore TREE$(T)$ runs in linear time.
At a vertex $v$, PRUNE runs in time $O(d(v))$ and COLOR runs in constant time.
Since $\sum_{V(T)}d(v)=|V(T)|-1$ and the post-order traversal takes $O(|V(T)|)$ time, it follows that UNIT\textunderscore BAR\textunderscore TREE$(T)$ runs in $O(|V(T)|)$ time.
\end{proof}

We note that UNIT\textunderscore BAR\textunderscore TREE$(T)$ only provides a decomposition into UBVTs when $ub(T)=\CL{\Delta(T)/3}$.
If $ub(T)=\CL{\frac{\Delta(T)+1}{3}}$, then modifying the PRUNE routine to allow up to $\CL{\Delta(T)/3}$ trees to be pruned at nonroot vertices and $\CL{\Delta(T)/3}+1$ trees to be pruned at the root will allow UNIT\textunderscore BAR\textunderscore TREE$(T)$ to produce an optimal decomposition.
This adds $O(|V(T)|)$ additional operations to the algorithm (one more prune at each vertex).
Thus an optimal decomposition of such a tree can also be found in linear time, resulting from possibly two iterations of UNIT\textunderscore BAR\textunderscore TREE$(T)$.
In~\cite{DV}, Dean and Veytsel provide a construction of a unit bar visibility representation of a UBVT that is clearly implementable in time linear in the number of vertices.
Since the sum of the sizes of the vertex sets in a decomposition of a tree $T$ into UBVTs is at most $2|V(T)|-2$, it follows that an optimal $t$-unit-bar visibility representation of a tree can be found in linear time.

In~\cite{DE-MHP}, Dean et al.\ introduced {\it unit rectangle visibility graphs}, where vertices are assigned to distinct axis-aligned unit rectangles in the plane and edges correspond to uninterrupted vertical or horizontal lines of sight between the rectangles.
They proved that a tree $T$ is a unit rectangle visibility graph if and only if $\Upsilon_{ub}(T)\le 2$.
In the conclusion of their paper, they ask if there is an efficient algorithm to determine if an arbitrary graph is a unit rectangle visibility graph, and note that the question is unanswered even for trees.
Algorithm~\ref{alg:tree} answers the question in the affirmative for trees.

\begin{corollary}
There is a linear time algorithm that determines if a tree is a unit rectangle visibility graph.
\end{corollary}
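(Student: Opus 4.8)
The plan is to reduce the decision problem to a computation that the earlier results already carry out in linear time, so that the corollary becomes a short composition of known facts. The result of Dean et al.\ in~\cite{DE-MHP} states that a tree $T$ is a unit rectangle visibility graph if and only if $\Upsilon_{ub}(T)\le 2$. By Theorem~\ref{thm:treesarboricity}, $\Upsilon_{ub}(T)=ub(T)$ for every tree $T$. Combining these two statements yields the equivalence that $T$ is a unit rectangle visibility graph if and only if $ub(T)\le 2$. Hence it suffices to compute $ub(T)$ and compare the result with $2$.

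The algorithm I would use is then immediate: run UNIT\textunderscore BAR\textunderscore TREE$(T)$ from Algorithm~\ref{alg:tree}, and answer ``yes'' precisely when the value it outputs is at most $2$. By Theorem~\ref{thm:TreeAlg}, UNIT\textunderscore BAR\textunderscore TREE$(T)$ returns the exact value of $ub(T)$ in time linear in the order of $T$, and the single comparison against $2$ takes constant time. Therefore the entire procedure runs in linear time, as claimed.

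The one point that needs a remark, rather than a genuine obstacle, is that the algorithm must report the exact value of $ub(T)$, not merely whether $ub(T)$ equals $\CL{\Delta(T)/3}$. This is exactly what Theorem~\ref{thm:TreeAlg} guarantees: the output is either $\CL{\Delta(T)/3}$ or $\CL{\frac{\Delta(T)+1}{3}}$, and by Corollary~\ref{cor:treelowerbound} and Theorem~\ref{thm:treebound} these are the only two possible values of $ub(T)$ for a tree. Since all of the substantive work has already been completed in Theorem~\ref{thm:TreeAlg}, I expect no difficulty here; the corollary is essentially a repackaging of that theorem through the characterization of Dean et al.
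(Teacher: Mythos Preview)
Your proof is correct and matches the paper's approach exactly: the paper simply cites the characterization of Dean et al.\ that a tree $T$ is a unit rectangle visibility graph if and only if $\Upsilon_{ub}(T)\le 2$, and then observes that Algorithm~\ref{alg:tree} (via Theorem~\ref{thm:TreeAlg} and Theorem~\ref{thm:treesarboricity}) decides this in linear time. Your additional remark about the two possible output values is a nice clarification but not strictly needed.
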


Given our results on trees, it is possible to bound the unit bar visibility number of planar graphs based on their maximum degree.

\begin{theorem}~\label{thm:planar}
If $G$ is a planar graph, then $ub(G)\le\CL{\frac{\Delta(G)+1}{3}}+2$.
\end{theorem}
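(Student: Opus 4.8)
The plan is to reduce the statement to a decomposition problem about unit bar visibility trees, and then combine the arboricity of planar graphs with the tree bound of Theorem~\ref{thm:treebound}. First I would record the general form of the disjoint-union construction from the proof of Theorem~\ref{thm:treesarboricity}: if the edges of \emph{any} graph $G$ can be decomposed into UBVTs so that every vertex lies in at most $t$ of them, then arranging the representations of those UBVTs side by side with pairwise disjoint $x$-projections produces a $t$-unit-bar visibility representation of $G$ (no cross-visibilities arise, and each vertex receives one bar per UBVT containing it). Hence $ub(G)\le t$, and it suffices to exhibit a decomposition of $G$ into UBVTs in which each vertex $v$ lies in at most $\CL{\frac{\Delta(G)+1}{3}}+2$ of them.

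Next I would invoke the fact that a planar graph has arboricity at most $3$; by Nash--Williams this holds because every subgraph $H$ of a simple planar graph satisfies $|E(H)|\le 3|V(H)|-6<3(|V(H)|-1)$. Thus $E(G)$ decomposes into three forests $F_1,F_2,F_3$. Applying Theorem~\ref{thm:treebound} to each tree component of each $F_i$ gives, for every $i$, a decomposition of $F_i$ into UBVTs in which each vertex $v$ lies in at most $\CL{\frac{d_{F_i}(v)+1}{3}}$ of them. Taking the union of these three families yields a decomposition of $G$ into UBVTs in which $v$ lies in at most $\sum_{i=1}^{3}\CL{\frac{d_{F_i}(v)+1}{3}}$ of them, where $d_{F_1}(v)+d_{F_2}(v)+d_{F_3}(v)=d_G(v)$.

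The only real computation, and the step where one must be careful, is the numeric inequality $\sum_{i=1}^{3}\CL{\frac{d_i+1}{3}}\le\CL{\frac{d_1+d_2+d_3+1}{3}}+2$ for nonnegative integers $d_1,d_2,d_3$. This follows from $\CL{\frac{d_i+1}{3}}\le\frac{d_i+1}{3}+\frac{2}{3}$: summing gives a left side of at most $\frac{d_G(v)}{3}+3$, whereas the right side is at least $\frac{d_G(v)+1}{3}+2=\frac{d_G(v)}{3}+\frac{7}{3}$. These bounds differ by $\frac{2}{3}<1$, and since the two sides of the desired inequality are integers the inequality must hold. The subtlety here is that the constant $2$ is sharp but easy to overshoot: bounding each ceiling crudely by $\frac{d_i}{3}+1$ only yields $+3$, and one genuinely needs the integrality of both sides to recover $+2$ (equality at $d_1=d_2=d_3=3$ shows it cannot be improved). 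Applying this with $d_i=d_{F_i}(v)$ shows every vertex lies in at most $\CL{\frac{d_G(v)+1}{3}}+2\le\CL{\frac{\Delta(G)+1}{3}}+2$ of the UBVTs, and the disjoint-union construction of the first paragraph then gives $ub(G)\le\CL{\frac{\Delta(G)+1}{3}}+2$.
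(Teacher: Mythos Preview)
Your argument is correct and follows essentially the same route as the paper: Nash--Williams arboricity $\le 3$ for planar graphs, then Theorem~\ref{thm:treebound} on each tree of the three forests, then the ceiling inequality $\sum_{i=1}^{3}\CL{\frac{d_i+1}{3}}\le\CL{\frac{d_1+d_2+d_3+1}{3}}+2$. The paper simply asserts this last inequality, whereas you supply a clean integrality argument for it; your explicit statement of the disjoint-union principle (decomposition into UBVTs with multiplicity $\le t$ at every vertex implies $ub(G)\le t$) is also a bit more careful than the paper, which leaves it implicit.
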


\begin{proof}
Nash-Williams~\cite{NW} proved that every planar graph has arboricity at most $3$, so there is a decomposition of $G$ into trees such that every vertex is in three trees (note that some of these trees may be $K_1$).
Let $v\in V(G)$ and let $T_1$, $T_2$, and $T_3$ be the trees in the decomposition containing $v$.
By Theorem~\ref{thm:treebound}, for each $i\in[3]$ there is a decomposition of $T$ into UBVTs so that $v$ is in at most $\CL{\frac{d_{T_i}(v)+1}{3}}$ elements of the decomposition.
Therefore there is a decomposition of $G$ into UBVTs such that $v$ is contained in at most $\CL{\frac{d_{T_1}(v)+1}{3}}+\CL{\frac{d_{T_2}(v)+1}{3}}+\CL{\frac{d_{T_3}(v)+1}{3}}$ elements of the decomposition.
Since $d_G(v)=d_{T_1}(v)+d_{T_2}(v)+d_{T_3}(v)$, it follows that $\CL{\frac{d_{T_1}(v)+1}{3}}+\CL{\frac{d_{T_2}(v)+1}{3}}+\CL{\frac{d_{T_3}(v)+1}{3}}\le \CL{\frac{d_G(v)+1}{3}}+2$.
\end{proof}

The {\it girth} of a graph is the length of its longest cycle.
When a planar graph has large girth, we can slightly improve Theorem~\ref{thm:planar}.

\begin{theorem}~\label{thm:planargirth}
If $G$ is a planar graph with girth at least $7$, then $ub(G)\leq \CL{\frac{\Delta(G)+1}{3}}+1$ .
\end{theorem}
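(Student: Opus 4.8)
The plan is to mirror the proof of Theorem~\ref{thm:planar}, replacing the arboricity-$3$ input of Nash-Williams with the fact that a planar graph of girth at least $7$ has arboricity at most $2$. First I would establish this sparsity bound via the Nash-Williams formula, which states that the arboricity of $G$ equals $\max_H \CL{e(H)/(|V(H)|-1)}$ over all subgraphs $H$ with at least one edge. Fix such an $H$. If $H$ is a forest then $e(H)\le |V(H)|-1\le 2(|V(H)|-1)$. Otherwise, consider the components $C_1,\ldots,C_c$ of $H$. Each tree component satisfies $e(C_j)=n_j-1$; each component containing a cycle is a connected planar graph of girth at least $7$ (so $n_j\ge 7$), and the Euler-formula edge bound $e\le \frac{g}{g-2}(n'-2)$ with $g=7$ gives $e(C_j)\le \frac{7}{5}(n_j-2)\le 2(n_j-1)$. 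Summing over components yields $e(H)\le 2\sum_j(n_j-1)=2(|V(H)|-c)\le 2(|V(H)|-1)$, so every ratio is at most $2$ and the arboricity of $G$ is at most $2$.

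Consequently $G$ decomposes into two forests $F_1$ and $F_2$; equivalently, as already noted in the preliminaries, $G$ decomposes into trees so that every vertex lies in at most two of them. Now I would run the per-vertex count exactly as in Theorem~\ref{thm:planar}, but with two trees instead of three. Fix $v\in V(G)$ and write $d_i=d_{F_i}(v)$, so that $d_1+d_2=d_G(v)$. Applying Theorem~\ref{thm:treebound} to the tree component of each $F_i$ that contains $v$ produces a decomposition of $G$ into UBVTs in which $v$ lies in at most $\CL{\frac{d_1+1}{3}}+\CL{\frac{d_2+1}{3}}$ of them.

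The final step is the arithmetic. Using the elementary identity $\CL{\frac{m+1}{3}}=\FL{m/3}+1$ for every nonnegative integer $m$, together with superadditivity of the floor function $\FL{d_1/3}+\FL{d_2/3}\le \FL{(d_1+d_2)/3}$, I obtain
\[
\CL{\tfrac{d_1+1}{3}}+\CL{\tfrac{d_2+1}{3}}
=\FL{\tfrac{d_1}{3}}+\FL{\tfrac{d_2}{3}}+2
\le \FL{\tfrac{d_G(v)}{3}}+2
=\CL{\tfrac{d_G(v)+1}{3}}+1
\le \CL{\tfrac{\Delta(G)+1}{3}}+1.
\]
Since this holds for every vertex $v$, the resulting decomposition of $G$ into UBVTs witnesses $ub(G)\le \CL{\frac{\Delta(G)+1}{3}}+1$.

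I expect the main obstacle to be the first step: justifying that the arboricity is at most $2$ for \emph{all} subgraphs $H$ (not just $G$ itself), including disconnected ones, through the Nash-Williams characterization and the girth-based edge bound applied componentwise. Once that reduction to two forests is in place, the counting is essentially identical to the argument already carried out in Theorem~\ref{thm:planar}, and the gain of one over that theorem comes entirely from using two forests rather than three.
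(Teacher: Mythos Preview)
Your proof is correct, and it differs from the paper's only in how the key sparsity claim is obtained. The paper invokes Borodin's theorem that every planar graph of girth at least $7$ is acyclically $3$-colorable; since an acyclic coloring is proper, the three pairwise unions of color classes induce three forests that \emph{decompose} $G$, with each vertex lying in exactly two of them. You instead bypass Borodin and compute the arboricity directly via the Nash-Williams formula, applying the Euler-formula edge bound $e\le \frac{g}{g-2}(n-2)$ componentwise to conclude that every subgraph $H$ satisfies $e(H)\le 2(|V(H)|-1)$, hence $G$ decomposes into two forests. Both routes yield the same intermediate conclusion (a tree decomposition with each vertex in at most two trees), after which the per-vertex counting via Theorem~\ref{thm:treebound} and the ceiling arithmetic are identical. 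Your route is more self-contained, trading a deep coloring result for an elementary sparsity calculation; the paper's route is shorter to state but imports a heavier theorem.
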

\begin{proof} 
Borodin~\cite{Borodin1976} proved that planar graphs with girth at least $7$ are acyclically 3-colorable.
That is, $V(G)$ can be partitioned into three sets such that the union of any two of the sets will induce a forest.
Each vertex will appear in exactly two of these forests.
Hence there is a decomposition of $G$ into trees such that each vertex is in at most two trees.
Let $v\in V(G)$ and let $T_1$ and $T_2$ be the trees that contain $v$.
By Theorem~\ref{thm:treebound}, there is a decomposition of $G$ into UBVTs so that $v$ is contained in at most $\CL{\frac{d_{T_1}(v)+1}{3}}+\CL{\frac{d_{T_2}(v)+1}{3}}$ elements of the decomposition.
Since $\CL{\frac{d_{T_1}(v)+1}{3}}+\CL{\frac{d_{T_2}(v)+1}{3}}\le \CL{\frac{d(v)+1}{3}}+1$, the result follows.
\end{proof}

\section{Complete Bipartite Graphs}\label{bipartite}

In~\cite{DV}, Dean and Veytsel characterized the complete bipartite graphs that are unit bar visibility graphs; unsurprisingly there are very few.

\begin{theorem}\label{ubvg bipartites}(Dean-Veytsel [2003]) The complete bipartite graph $K_{m,n}$, where $m \ge n$, is a UBVG if and only if $m \leq 3$ and $n=1$, or $m=n=2$.
\end{theorem}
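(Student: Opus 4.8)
The plan is to prove both implications, and to organize the necessity direction according to the size $n$ of the smaller partite set, since each value of $n$ calls for a different tool. \emph{Sufficiency} is routine. The graphs $K_{1,1}$, $K_{2,1}$, and $K_{3,1}$ are stars, hence caterpillars of maximum degree at most $3$, so each is a UBVG by Theorem~\ref{UBVG trees}. For $K_{2,2}=C_4$ with parts $\{a_1,a_2\}$ and $\{b_1,b_2\}$ I would exhibit an explicit layout: take $b_1$ and $b_2$ to be the bars $[0,1]$ at heights $0$ and $3$, and take $a_1=[-\tfrac12,\tfrac12]$ at height $1$ and $a_2=[\tfrac12,\tfrac32]$ at height $2$. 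One checks directly that each $a_i$ sees each $b_j$, that $\proj(a_1)$ and $\proj(a_2)$ meet only in a point so $a_1,a_2$ do not see each other, and that $a_1,a_2$ together block every positive-width channel between $b_1$ and $b_2$. This realizes $K_{2,2}$.

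For \emph{necessity}, suppose $K_{m,n}$ with $m\ge n$ is a UBVG, with parts $A=\{a_1,\dots,a_m\}$ and $B=\{b_1,\dots,b_n\}$. If $n\ge 3$, then $K_{m,n}$ contains $K_{3,3}$ as a subgraph and is therefore nonplanar; but every unit bar visibility graph is a bar visibility graph and hence planar by the characterization of Tamassia--Tollis and Wismath \cite{TT,Wismath}, so this is impossible and $n\le 2$. If $n=1$, then $K_{m,1}$ is a star, i.e.\ a caterpillar of maximum degree $m$, and by Theorem~\ref{UBVG trees} it is a UBVG precisely when $m\le 3$. It remains to handle $n=2$, where the task is to show that $K_{m,2}$ is not a UBVG once $m\ge 3$.

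For the case $n=2$, $m\ge 3$, I would reduce to the following purely geometric claim: no unit bar layout contains five bars $a_1,a_2,a_3,b_1,b_2$ such that each $a_i$ sees both $b_1$ and $b_2$, no two of $a_1,a_2,a_3$ see each other, and $b_1,b_2$ do not see each other. Any unit bar representation of $K_{m,2}$ with $m\ge 3$ supplies such a configuration (pick any three bars of $A$; all the required visibilities and non-visibilities are forced by the edges and non-edges of $K_{m,2}$), so the claim completes the proof. To prove the claim, normalize by Lemma~\ref{lem:distincty} so all bars have distinct $y$-coordinates, and assume $y_{b_1}<y_{b_2}$. Since each $a_i$ has unit length and its projection must overlap both unit projections $\proj(b_1)$ and $\proj(b_2)$, the left endpoints of $a_1,a_2,a_3$ are confined to a short window determined by the endpoints of $b_1$ and $b_2$: the three $a_i$-projections mutually overlap when $b_1,b_2$ are far apart, while if $b_1,b_2$ overlap then the bars lying between them must jointly cover the overlap region to prevent a channel. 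The argument then runs by a case analysis on the vertical position of each $a_i$ relative to the band between $b_1$ and $b_2$ (below both, between, or above both), using the standard fact that the visibility channels from a fixed bar to the bars it sees on one side are pairwise disjoint subintervals of that bar's unit projection. The recurring obstruction is that a lower $a_i$ whose projection covers the relevant window blocks a higher $a_j$ from reaching the farther of $b_1,b_2$; with only unit width to share between two common targets, three pairwise-invisible bars cannot each retain private vertical access to both.

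I expect this final case analysis to be the main obstacle. The asymmetric configurations, in which some $a_i$ sits below $b_1$ or above $b_2$ and reaches the farther bar through the portion of its projection extending past the other $b$-bar, give just enough freedom that one must simultaneously track the disjoint-channel bookkeeping on $b_1$, on $b_2$, and on the overlap region, rather than appeal to any soft invariant. In particular no planarity argument can work here, since $K_{2,3}$ is planar and is in fact a (non-unit) bar visibility graph; the unit-length hypothesis is doing all the work, and the proof must use it geometrically.
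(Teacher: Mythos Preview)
The paper does not prove this theorem; it is stated with attribution to Dean and Veytsel~\cite{DV} and no proof is given. So there is no argument in the paper to compare against.

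Your plan is sound for sufficiency (the explicit $K_{2,2}$ layout works as described) and for the necessity cases $n\ge 3$ (nonplanarity via $K_{3,3}$) and $n=1$ (Theorem~\ref{UBVG trees}). The only substantive case is $n=2$, $m\ge 3$, and here you correctly isolate the difficulty but do not actually carry out the argument. Your reduction to showing that $K_{3,2}$ has no unit bar representation is right, and your observation that planarity is useless here (since $K_{3,2}$ is planar and is a non-unit bar visibility graph) is on point. However, what follows is only a sketch: you do not exhibit a contradiction in any configuration, and the concluding sentence (``three pairwise-invisible bars cannot each retain private vertical access to both'') is a restatement of the claim, not a proof of it. To complete this you must actually run the case analysis on how many of $a_1,a_2,a_3$ lie strictly between $b_1$ and $b_2$ in height, and in each case show that the unit-length constraint forces either an unwanted visibility among the $a_i$, a visibility between $b_1$ and $b_2$, or a missing visibility from some $a_i$ to some $b_j$. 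That geometric case analysis is exactly where the content of the Dean--Veytsel argument lives; your proposal has located it correctly but not supplied it.
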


In this section we study the unit bar visibility number of complete bipartite graphs.
Throughout this section, we let $K_{m,n}$ have partite sets $Y$ and $X$ with $Y=\{y_1,\ldots,y_m\}$ and $X=\{x_1,\ldots,x_n\}$.
We will also assume that $m\ge n\ge 2$.
Note that the case when $n=1$ is handled in Section~\ref{trees}.
We begin with two constructions of $t$-unit-bar visibility representations of $K_{m,n}$.

\begin{construction}\label{Knmconst}
The construction begins with $2\CL{\frac{m}{4}}+1$ horizontal line segments of length $n$, each of which will be subdivided into $n$ contiguous unit bars.
From bottom to top, the segments are assigned to $Y$ and $X$ in alternating fashion; thus both the top and bottom bars are assigned to $Y$.
For  $j\in [\CL{\frac m4}+1]$, let $Y_j$ be the $j$th segment assigned to $Y$ and for $j\in [\CL{\frac{m}{4}}]$ let $X_j$  be the $j$th segment assigned to $X$, indexing from bottom to top.
For $j\in [\CL{\frac m4}+1]$, the segment $Y_j$ has left endpoint $(j-1,j-1)$, and for $j\in [\CL{\frac{m}{4}}]$ the segment $X_j$ has left endpoint $(j-\frac{1}{2},j-\frac{1}{2})$.

In each segment, the unit bars from left to right will correspond to $n$ consecutive vertices in the ordering of $Y$ or $X$.
Thus providing the leftmost unit bar in each line segment determines the construction.
For $j\in [\CL{\frac m4}+1]$, the leftmost bar of the segment $Y_j$ is assigned to $y_{3j-2}$.
For $j\in [\CL{\frac m4}]$, the leftmost bar of the segment $X_j$ is assigned to $x_{2-j}$ (with the index taken modulo $n$).
See Figure~\ref{fig:Const1}.

After constructing the line segments, for each pair $(x_i,y_{i'})$ where no bar of $x_i$ sees a bar of $y_{i'}$, to the right of the line segments we add a unit bar for both $x_i$ and $y_{i'}$ that are visible to each other and no other bars.
\end{construction}

\begin{figure}
\centering
\begin{tikzpicture}
\draw [line width=2pt] (0,0)--(6.5,0);
\draw [line width=2pt, dotted] (6.5,0)--(7.5,0);
\draw [line width=2pt] (7.5,0)--(10,0);
\draw [line width=2pt] (0,-.2)--(0,.2);
\draw [line width=2pt] (1,-.2)--(1,.2);
\draw [line width=2pt] (2,-.2)--(2,.2);
\draw [line width=2pt] (3,-.2)--(3,.2);
\draw [line width=2pt] (4,-.2)--(4,.2);
\draw [line width=2pt] (5,-.2)--(5,.2);
\draw [line width=2pt] (6,-.2)--(6,.2);
\draw [line width=2pt] (8,-.2)--(8,.2);
\draw [line width=2pt] (9,-.2)--(9,.2);
\draw [line width=2pt] (10,-.2)--(10,.2);
\node [right] at (0,.2)  {$y_{1}$};
\node [right] at (1,.2) {$y_{2}$};
\node [right] at (2,.2) {$y_{3}$};
\node [right] at (3,.2) {$y_{4}$};
\node [right] at (4,.2) {$y_{5}$};
\node [right] at (5,.2) {$y_{6}$};
\node [right] at (6,.2) {$y_{7}$};
\node [right] at (8,.2) {$y_{n-1}$};
\node [right] at (9,.2) {$y_{n}$};
\node at (-1,0) {$Y_1$};

\draw [shift={(.5,.7)}, line width=2pt] (0,0)--(6.5,0);
\draw [shift={(.5,.7)}, line width=2pt, dotted] (6.5,0)--(7.5,0);
\draw [shift={(.5,.7)}, line width=2pt] (7.5,0)--(10,0);
\draw [shift={(.5,.7)}, line width=2pt] (0,-.2)--(0,.2);
\draw [shift={(.5,.7)}, line width=2pt] (1,-.2)--(1,.2);
\draw [shift={(.5,.7)}, line width=2pt] (2,-.2)--(2,.2);
\draw [shift={(.5,.7)}, line width=2pt] (3,-.2)--(3,.2);
\draw [shift={(.5,.7)}, line width=2pt] (4,-.2)--(4,.2);
\draw [shift={(.5,.7)}, line width=2pt] (5,-.2)--(5,.2);
\draw [shift={(.5,.7)}, line width=2pt] (6,-.2)--(6,.2);
\draw [shift={(.5,.7)}, line width=2pt] (8,-.2)--(8,.2);
\draw [shift={(.5,.7)}, line width=2pt] (9,-.2)--(9,.2);
\draw [shift={(.5,.7)}, line width=2pt] (10,-.2)--(10,.2);
\node [shift={(.5,.7)}, right] at (0,.2)  {$x_{1}$};
\node [shift={(.5,.7)}, right] at (1,.2)  {$x_{2}$};
\node [shift={(.5,.7)}, right] at (2,.2)  {$x_{3}$};
\node [shift={(.5,.7)}, right] at (3,.2)  {$x_{4}$};
\node [shift={(.5,.7)}, right] at (4,.2)  {$x_{5}$};
\node [shift={(.5,.7)}, right] at (5,.2)  {$x_{6}$};
\node [shift={(.5,.7)}, right] at (6,.2)  {$x_{7}$};
\node [shift={(.5,.7)}, right] at (8,.2)  {$x_{n-1}$};
\node [shift={(.5,.7)}, right] at (9,.2) {$x_{n}$};
\node [shift={(0, .7)}] at (-1,0) {$X_1$};

\draw [shift={(1,1.4)}, line width=2pt] (0,0)--(6.5,0);
\draw [shift={(1,1.4)}, line width=2pt, dotted] (6.5,0)--(7.5,0);
\draw [shift={(1,1.4)}, line width=2pt] (7.5,0)--(10,0);
\draw [shift={(1,1.4)}, line width=2pt] (0,-.2)--(0,.2);
\draw [shift={(1,1.4)}, line width=2pt] (1,-.2)--(1,.2);
\draw [shift={(1,1.4)}, line width=2pt] (2,-.2)--(2,.2);
\draw [shift={(1,1.4)}, line width=2pt] (3,-.2)--(3,.2);
\draw [shift={(1,1.4)}, line width=2pt] (4,-.2)--(4,.2);
\draw [shift={(1,1.4)}, line width=2pt] (5,-.2)--(5,.2);
\draw [shift={(1,1.4)}, line width=2pt] (6,-.2)--(6,.2);
\draw [shift={(1,1.4)}, line width=2pt] (8,-.2)--(8,.2);
\draw [shift={(1,1.4)}, line width=2pt] (9,-.2)--(9,.2);
\draw [shift={(1,1.4)}, line width=2pt] (10,-.2)--(10,.2);
\node [shift={(1,1.4)}, right] at (0,.2)  {$y_{4}$};
\node [shift={(1,1.4)}, right] at (1,.2)  {$y_{5}$};
\node [shift={(1,1.4)}, right] at (2,.2)  {$y_{6}$};
\node [shift={(1,1.4)}, right] at (3,.2)  {$y_{7}$};
\node [shift={(1,1.4)}, right] at (4,.2)  {$y_{8}$};
\node [shift={(1,1.4)}, right] at (5,.2)  {$y_{9}$};
\node [shift={(1,1.4)}, right] at (6,.2)  {$y_{10}$};
\node [shift={(1,1.4)}, right] at (8,.2)  {$y_{n+2}$};
\node [shift={(1,1.4)}, right] at (9,.2) {$y_{n+3}$};
\node [shift={(0, 1.4)}] at (-1,0) {$Y_2$};

\draw [shift={(1.5,2.1)}, line width=2pt] (0,0)--(6.5,0);
\draw [shift={(1.5,2.1)}, line width=2pt, dotted] (6.5,0)--(7.5,0);
\draw [shift={(1.5,2.1)}, line width=2pt] (7.5,0)--(10,0);
\draw [shift={(1.5,2.1)}, line width=2pt] (0,-.2)--(0,.2);
\draw [shift={(1.5,2.1)}, line width=2pt] (1,-.2)--(1,.2);
\draw [shift={(1.5,2.1)}, line width=2pt] (2,-.2)--(2,.2);
\draw [shift={(1.5,2.1)}, line width=2pt] (3,-.2)--(3,.2);
\draw [shift={(1.5,2.1)}, line width=2pt] (4,-.2)--(4,.2);
\draw [shift={(1.5,2.1)}, line width=2pt] (5,-.2)--(5,.2);
\draw [shift={(1.5,2.1)}, line width=2pt] (6,-.2)--(6,.2);
\draw [shift={(1.5,2.1)}, line width=2pt] (8,-.2)--(8,.2);
\draw [shift={(1.5,2.1)}, line width=2pt] (9,-.2)--(9,.2);
\draw [shift={(1.5,2.1)}, line width=2pt] (10,-.2)--(10,.2);
\node [shift={(1.5,2.1)}, right] at (0,.2)  {$x_{n}$};
\node [shift={(1.5,2.1)}, right] at (1,.2)  {$x_{1}$};
\node [shift={(1.5,2.1)}, right] at (2,.2)  {$x_{2}$};
\node [shift={(1.5,2.1)}, right] at (3,.2)  {$x_{3}$};
\node [shift={(1.5,2.1)}, right] at (4,.2)  {$x_{4}$};
\node [shift={(1.5,2.1)}, right] at (5,.2)  {$x_{5}$};
\node [shift={(1.5,2.1)}, right] at (6,.2)  {$x_{6}$};
\node [shift={(1.5,2.1)}, right] at (8,.2)  {$x_{n-2}$};
\node [shift={(1.5,2.1)}, right] at (9,.2) {$x_{n-1}$};
\node [shift={(0, 2.1)}] at (-1,0) {$X_2$};

\draw [shift={(2,2.8)}, line width=2pt] (0,0)--(6.5,0);
\draw [shift={(2,2.8)}, line width=2pt, dotted] (6.5,0)--(7.5,0);
\draw [shift={(2,2.8)}, line width=2pt] (7.5,0)--(10,0);
\draw [shift={(2,2.8)}, line width=2pt] (0,-.2)--(0,.2);
\draw [shift={(2,2.8)}, line width=2pt] (1,-.2)--(1,.2);
\draw [shift={(2,2.8)}, line width=2pt] (2,-.2)--(2,.2);
\draw [shift={(2,2.8)}, line width=2pt] (3,-.2)--(3,.2);
\draw [shift={(2,2.8)}, line width=2pt] (4,-.2)--(4,.2);
\draw [shift={(2,2.8)}, line width=2pt] (5,-.2)--(5,.2);
\draw [shift={(2,2.8)}, line width=2pt] (6,-.2)--(6,.2);
\draw [shift={(2,2.8)}, line width=2pt] (8,-.2)--(8,.2);
\draw [shift={(2,2.8)}, line width=2pt] (9,-.2)--(9,.2);
\draw [shift={(2,2.8)}, line width=2pt] (10,-.2)--(10,.2);
\node [shift={(2,2.8)}, right] at (0,.2)  {$y_{7}$};
\node [shift={(2,2.8)}, right] at (1,.2)  {$y_{8}$};
\node [shift={(2,2.8)}, right] at (2,.2)  {$y_{9}$};
\node [shift={(2,2.8)}, right] at (3,.2)  {$y_{10}$};
\node [shift={(2,2.8)}, right] at (4,.2)  {$y_{11}$};
\node [shift={(2,2.8)}, right] at (5,.2)  {$y_{12}$};
\node [shift={(2,2.8)}, right] at (6,.2)  {$y_{13}$};
\node [shift={(2,2.8)}, right] at (8,.2)  {$y_{n+5}$};
\node [shift={(2,2.8)}, right] at (9,.2) {$y_{n+6}$};
\node [shift={(0, 2.8)}] at (-1,0) {$Y_3$};

\draw [shift={(2.5,3.5)}, line width=2pt] (0,0)--(6.5,0);
\draw [shift={(2.5,3.5)}, line width=2pt, dotted] (6.5,0)--(7.5,0);
\draw [shift={(2.5,3.5)}, line width=2pt] (7.5,0)--(10,0);
\draw [shift={(2.5,3.5)}, line width=2pt] (0,-.2)--(0,.2);
\draw [shift={(2.5,3.5)}, line width=2pt] (1,-.2)--(1,.2);
\draw [shift={(2.5,3.5)}, line width=2pt] (2,-.2)--(2,.2);
\draw [shift={(2.5,3.5)}, line width=2pt] (3,-.2)--(3,.2);
\draw [shift={(2.5,3.5)}, line width=2pt] (4,-.2)--(4,.2);
\draw [shift={(2.5,3.5)}, line width=2pt] (5,-.2)--(5,.2);
\draw [shift={(2.5,3.5)}, line width=2pt] (6,-.2)--(6,.2);
\draw [shift={(2.5,3.5)}, line width=2pt] (8,-.2)--(8,.2);
\draw [shift={(2.5,3.5)}, line width=2pt] (9,-.2)--(9,.2);
\draw [shift={(2.5,3.5)}, line width=2pt] (10,-.2)--(10,.2);
\node [shift={(2.5,3.5)}, right] at (0,.2)  {$x_{n-1}$};
\node [shift={(2.5,3.5)}, right] at (1,.2)  {$x_{n}$};
\node [shift={(2.5,3.5)}, right] at (2,.2)  {$x_{1}$};
\node [shift={(2.5,3.5)}, right] at (3,.2)  {$x_{2}$};
\node [shift={(2.5,3.5)}, right] at (4,.2)  {$x_{3}$};
\node [shift={(2.5,3.5)}, right] at (5,.2)  {$x_{4}$};
\node [shift={(2.5,3.5)}, right] at (6,.2)  {$x_{5}$};
\node [shift={(2.5,3.5)}, right] at (8,.2)  {$x_{n-3}$};
\node [shift={(2.5,3.5)}, right] at (9,.2) {$x_{n-2}$};
\node [shift={(0, 3.5)}] at (-1,0) {$X_3$};

\node at (-1,4.7) {$\vdots$};
\draw [line width =2pt, dotted] (3,4)--(3.9,5.26);
\draw [line width =2pt, dotted] (12,4)--(12.9,5.26);

\draw [shift={(4,5.6)}, line width=2pt] (0,0)--(6.5,0);
\draw [shift={(4,5.6)}, line width=2pt, dotted] (6.5,0)--(7.5,0);
\draw [shift={(4,5.6)}, line width=2pt] (7.5,0)--(10,0);
\draw [shift={(4,5.6)}, line width=2pt] (0,-.2)--(0,.2);
\draw [shift={(4,5.6)}, line width=2pt] (1,-.2)--(1,.2);
\draw [shift={(4,5.6)}, line width=2pt] (2,-.2)--(2,.2);
\draw [shift={(4,5.6)}, line width=2pt] (3,-.2)--(3,.2);
\draw [shift={(4,5.6)}, line width=2pt] (4,-.2)--(4,.2);
\draw [shift={(4,5.6)}, line width=2pt] (5,-.2)--(5,.2);
\draw [shift={(4,5.6)}, line width=2pt] (6,-.2)--(6,.2);
\draw [shift={(4,5.6)}, line width=2pt] (8,-.2)--(8,.2);
\draw [shift={(4,5.6)}, line width=2pt] (9,-.2)--(9,.2);
\draw [shift={(4,5.6)}, line width=2pt] (10,-.2)--(10,.2);
\node [shift={(4,5.6)}] at (-1,0)  {$x_{n-\FL{\frac{m}{4}}+2}$};
\node [shift={(4,5.6)}, right] at (9,-.45) {$x_{\FL{\frac{m}{4}}+2}$};
\node [shift={(0, 5.6)}] at (-1,0) {$X_{\CL{m/4}}$};

\draw [shift={(4.5,6.3)}, line width=2pt] (0,0)--(6.5,0);
\draw [shift={(4.5,6.3)}, line width=2pt, dotted] (6.5,0)--(7.5,0);
\draw [shift={(4.5,6.3)}, line width=2pt] (7.5,0)--(10,0);
\draw [shift={(4.5,6.3)}, line width=2pt] (0,-.2)--(0,.2);
\draw [shift={(4.5,6.3)}, line width=2pt] (1,-.2)--(1,.2);
\draw [shift={(4.5,6.3)}, line width=2pt] (2,-.2)--(2,.2);
\draw [shift={(4.5,6.3)}, line width=2pt] (3,-.2)--(3,.2);
\draw [shift={(4.5,6.3)}, line width=2pt] (4,-.2)--(4,.2);
\draw [shift={(4.5,6.3)}, line width=2pt] (5,-.2)--(5,.2);
\draw [shift={(4.5,6.3)}, line width=2pt] (6,-.2)--(6,.2);
\draw [shift={(4.5,6.3)}, line width=2pt] (8,-.2)--(8,.2);
\draw [shift={(4.5,6.3)}, line width=2pt] (9,-.2)--(9,.2);
\draw [shift={(4.5,6.3)}, line width=2pt] (10,-.2)--(10,.2);
\node [shift={(4.5,6.3)},] at (-1,0)  {$y_{3\CL{\frac{m}{4}}+1}$};
\node [shift={(4.5,6.3)}, right] at (9,.45) {$y_{n+3\CL{\frac{m}{4}}+1}$};
\node [shift={(0, 6.3)}] at (-1,0) {$Y_{\CL{m/4}+1}$};

\end{tikzpicture}

\caption{The line segments in Construction~\ref{Knmconst}.}\label{fig:Const1}
\end{figure}
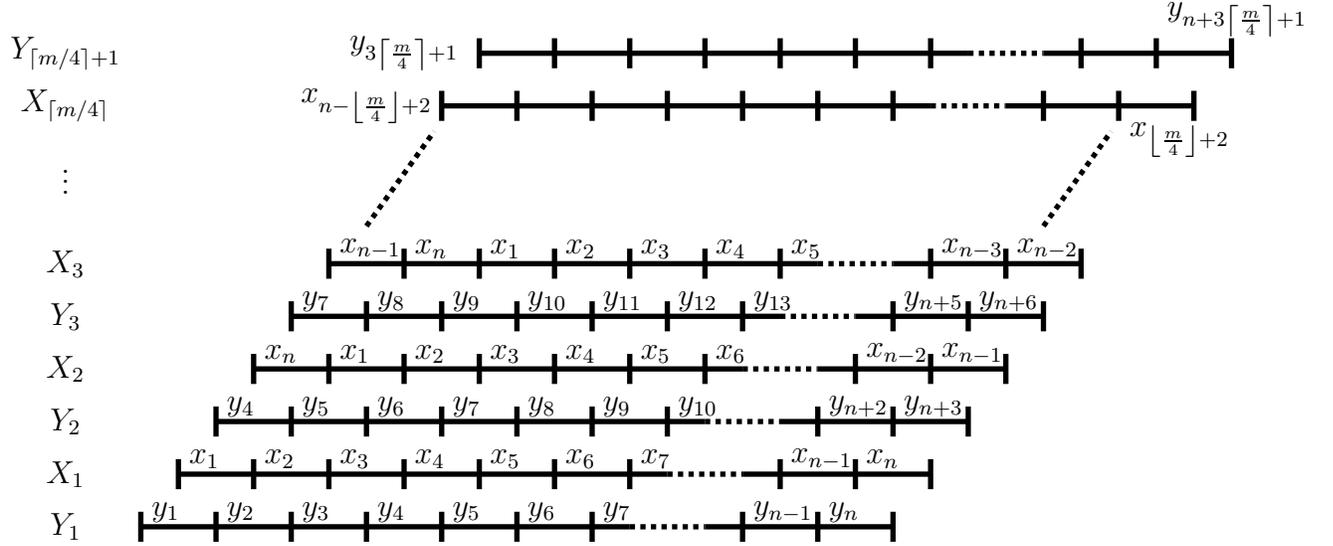

\begin{lemma}\label{lem:KmnUBcm}
The unit bar visibility layout of Construction~\ref{Knmconst} is a $t$-unit-bar visibiliy representation of $K_{m,n}$ with $t\le\CL{\frac m4}+\frac{2m}{n}+12$.
\end{lemma}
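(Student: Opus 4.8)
The plan is to verify two separate claims about the layout $R$ produced by Construction~\ref{Knmconst}: first, that $R$ is a genuine unit-bar visibility representation of $K_{m,n}$ (every $x$--$y$ pair is a visibility and no two vertices in a common part ever see each other), and second, that no vertex receives more than $\CL{\frac m4}+\frac{2m}{n}+12$ bars.

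First I would pin down the geometry. The $2\CL{\frac m4}+1$ segments occupy the distinct heights $0,\tfrac12,1,\tfrac32,\dots$ in the alternating order $Y,X,Y,X,\dots$, and inside a single segment the $n$ unit bars are non-overlapping, so all bars are pairwise disjoint. Since $Y_j$ and $X_j$ (and likewise $X_j$ and $Y_{j+1}$) differ in height by $\tfrac12$ and are offset horizontally by $\tfrac12$, a one-line interval computation shows that the $p$-th bar of $X_j$ sees exactly the $p$-th and $(p{+}1)$-st bars of $Y_j$ beneath it and the $(p{-}1)$-st and $p$-th bars of $Y_{j+1}$ above it, with the obvious omissions when $p$ is at an end of its segment. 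Read from the other side, the same computation shows that the common horizontal extent $[j,\,j{-}1{+}n]$ of $Y_j$ and $Y_{j+1}$ is entirely covered by $X_j$, whose extent is $[\,j{-}\tfrac12,\,j{+}n{-}\tfrac12\,]$; an identical remark covers any two same-part segments flanking an intermediate one, and bars sharing a segment share a height and so see nothing. Hence the only visibilities among segment bars join an $x$-vertex to a $y$-vertex. Finally the leftover bars are placed far to the right, each missing pair in its own private horizontal window and at isolated heights, so each such pair sees only itself; this supplies precisely the $x$--$y$ visibilities the segments missed and creates no same-part visibility. Together these establish $G(R)=K_{m,n}$.

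Next I would count bars. Each $X$-segment is a full cyclic rotation of $X$, so each $x_i$ appears once in every one of the $\CL{\frac m4}$ segments $X_1,\dots,X_{\CL{m/4}}$, giving $x_i$ exactly $\CL{\frac m4}$ segment bars; each $Y$-segment is a window of $n$ consecutive $y$'s, and since the windows start at $3j-2$ a fixed $y_r$ lies in at most $\CL{\frac n3}+1$ of them and certainly in at most $\CL{\frac m4}+1$ of them, so every $y$-vertex has at most $\CL{\frac m4}+1$ segment bars. It remains to bound the \emph{leftover} bars of each vertex, i.e.\ the number of opposite-part vertices it fails to see through the segments. Translating the visibility rule through the cyclic labels, the bar of $x_i$ in $X_j$ sees the four consecutive $y$'s with indices $3j-2+p_j,\dots,3j+1+p_j$, where $p_j=(i-2+j)\bmod n$; as $j$ grows these length-$4$ blocks advance by $4$ and therefore tile the residues modulo $m$ \emph{except} at the values of $j$ where $p_j$ wraps modulo $n$ or sits at an endpoint of $\{0,n-1\}$, and there are only $O(m/n)$ such exceptional $j$. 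Each exceptional $j$ removes only a bounded number of residues from the cover, so $x_i$ misses at most $\frac{2m}{n}+O(1)$ of the $y$'s. The symmetric computation shows the bar of $y_r$ in $Y_j$ sees four consecutive $x$'s, and since $y_r$ meets about $\tfrac n3$ segments these blocks sweep all $n$ residues with only $O(1)$ endpoint defects, so $y_r$ misses at most $O(1)$ of the $x$'s. Adding segment bars to leftover bars, the extremal vertex is of type $x$, with at most $\CL{\frac m4}+\frac{2m}{n}+O(1)$ bars; absorbing every bounded error term into the additive constant yields $\CL{\frac m4}+\frac{2m}{n}+12$.

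The main obstacle is the leftover count in the third step: one must control the \emph{simultaneous} reductions modulo $n$ (which fixes $p_j$, hence the position of each length-$4$ block) and modulo $m$ (which fixes the residues a block actually covers). The clean statement to aim for is that the blocks seen by a single vertex form an arithmetic progression of intervals failing to be a perfect tiling only near the $O(m/n)$ wrap-arounds and the $O(m/n)$ segment endpoints, with each failure costing only boundedly many uncovered vertices. Producing an explicit constant here, so that all the slack fits inside the $+12$, is the delicate bookkeeping; but it is exactly the boundary-term accounting that the tiling picture renders routine once that picture is set up.
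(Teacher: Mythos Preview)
Your plan is correct and follows the same two–part structure as the paper: first confirm the layout represents $K_{m,n}$, then bound the bars per vertex as (segment bars) $+$ (leftover bars). The only real difference is in how the leftover count for $x_i$ is executed: rather than your tiling-mod-$m$ picture with defects at wrap-arounds, the paper argues more directly that the four $y$-indices $i+4j-4,\dots,i+4j-1$ seen by $b_{i,j}$ are pairwise disjoint integer blocks as $j$ ranges (no reduction modulo $m$ is invoked), and that $x_i$ occupies an endpoint position of some $X_j$ at most $2\bigl\lceil\lceil m/4\rceil/n\bigr\rceil$ times, so at least $4\bigl(\lceil m/4\rceil-2\lceil\lceil m/4\rceil/n\rceil\bigr)$ distinct $y$-vertices are seen; the explicit arithmetic then gives the constant $+12$ (and an analogous count gives $\lceil m/4\rceil+8$ for each $y$-vertex), so the $+12$ is not an absorbed $O(1)$ but a number you would actually need to produce.
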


\begin{proof}
For $i\in [n]$ and $j\in [\CL{m/4}]$, let $b_{i,j}$ be the bar in $X_j$ that is assigned to $x_i$.
For $i\in [m]$ and $j\in [\CL{m/4}+1]$, let $\beta_{i,j}$ be the bar in $Y_j$ that is assigned to $y_i$ (note that $\beta_{i,j}$ may not exist for certain combinations of $i$ and $j$).
Fix $j\in [\CL{m/4}]$ and $i\in[n]$.
If $i\not \equiv 2-j,1-j\pmod n$, then $b_{i,j}$ sees bars $\beta_{i+4j-4,j}$, $\beta_{i+4j-3,j}$, $\beta_{i+4j-2,j+1}$, and $\beta_{i+4j-1,j+1}$.
If $i\equiv 2-j\pmod n$, then $b_{i,j}$ is the leftmost bar of $X_j$ and sees bars $\beta_{i+4j-4,j}$, $\beta_{i+4j-3,j}$, and $\beta_{i+4j-1,j+1}$.
If $i\equiv 1-j\pmod n$, then $b_{i,j}$ is the rightmost bar of $X_j$ and sees bars $\beta_{i+4j-4,j}$, $\beta_{i+4j-2,j+1}$, and $\beta_{i+4j-1,j+1}$.
Therefore, given $i\in [n]$, the sets of bars seen by $b_{i,1},\ldots,b_{i,\CL{m/4}}$ correspond to pairwise disjoint sets of vertices in $Y$.
It also follows that given $i\in [m]$, the bars in $Y_1,\ldots,Y_{\CL{m/4}+1}$ assigned to $y_i$ see bars corresponding to pairwise disjoint sets of vertices in $X$.

Let $x_i\in X$.
Observe that $x_i$ is assigned to the leftmost bar of at most $\CL{\frac {\CL{m/4}}{n}}$ segments.
Similarly, $x_i$ is assigned to the rightmost bar of at most $\CL{\frac {\CL{m/4}}{n}}$ segments.
Interior bars in the segments see four other bars, and leftmost and rightmost bars see three other bars, so the bars assigned to $x_i$ in $X_1\cup\ldots\cup X_{\CL{m/4}}$ see at least $4\left(\CL{\frac{m}{4}}-2\CL{\frac {\CL{m/4}}{n}}\right)$ bars, which are assigned to distinct vertices in $Y$.
Thus $x_i$ is assigned to at most $\frac{2m}{n}+12$ additional bars.
Therefore the vertices in $X$ are assigned to at most $\CL{\frac m4}+\frac{2m}{n}+12$ bars.

Each interior bar of $Y_j$ for $2\le j\le \CL{m/4}$ sees four other bars, while the leftmost and rightmost bars see three other bars.
The leftmost bar of $Y_1$ and the rightmost bar of $Y_{\CL{\frac m4}+1}$ each see one other bar.
All other bars in $Y_1$ and $Y_{\CL{\frac m4}+1}$ see two other bars.
Let $y_i\in Y$.
Observe that $y_i$ is assigned to the leftmost bar of at most two segments and the rightmost bar of at most two segments.
Furthermore, there are at most $\CL{(m-n)/3}$ segments that do not contain a bar assigned to $y_i$.
Suppose that $y_i$ is not assigned to a bar in $r$ of the segments.
Since $y_i$ is assigned to at most one bar in $Y_1$ and $Y_{\CL{m/4}+1}$, it follows that the bars of $y_i$ in the segments see at least $4(\CL{\frac m4}+1-r)-8$ bars, which are assigned to distinct vertices in $X$.
Thus $y_i$ needs at most $n-(4(\CL{\frac m4}+1-r)-8)$ additional bars.
Since
\begin{align*}
\CL{\frac m4}+1-r+n-\left(4\left(\CL{\frac m4}+1-r\right)-8\right)&\le \CL{\frac m4}+3r+n-m+5\\
&\le\CL{\frac m4}+3\CL{\frac{m-n}{3}}+n-m+5\\
&\le\CL{\frac m4}+8,
\end{align*}
it follows that $y_j$ is assigned to at most $\CL{\frac m4}+8$ bars.

\end{proof}

\begin{construction}\label{const:Kmo(m)}
We begin with an array of $2\FL{n/4}+1$ line segments of length $n\FL{m/n}$.
From bottom to top, the segments are assigned to $Y$ and $X$ in alternating fashion.
For  $j\in [\FL{n/4}+1]$, let $Y_j$ for be the $j$th segment assigned to $Y$ and for $j\in [\FL{n/4}]$ let $X_j$  be the $j$th segment assigned to $X$, indexing from bottom to top.
For $j\in [\FL{n/4}+1]$, the segment $Y_j$ has left endpoint $(j-1,j-1)$, and for $j\in [\FL{n/4}]$ the segment $X_j$ has left endpoint $(j-\frac{1}{2},j-\frac{1}{2})$.
The segments assigned to $X$ consist of $\FL{m/n}$ sets of bars assigned to the vertices of $X$ in order, up to a cyclic shift.
The segments assigned to $Y$ consist of $n\FL{m/n}$ bars assigned to $\{y_1,\ldots,y_{n\FL{m/n}}\}$ up to a cyclic shift.
For $j\in [\FL{\frac n4}+1]$, the leftmost bar of the segment $Y_j$ is assigned to $y_{3j-2}$.
For $j\in [\FL{\frac n4}]$, the leftmost bar of the segment $X_j$ is assigned to $x_{2-i}$.
See Figure~\ref{fig:Kmo(m0}.

After constructing the line segments, for each pair $(x_i,y_{i'})$ where no bar of $x_i$ sees a bar of $y_{i'}$,  to the right of the line segments we add a unit bar for both $x_i$ and $y_{i'}$ that are visible to each other and no other bars.
\end{construction}

\begin{figure}
\begin{tikzpicture}
\draw [line width=2pt] (0,0) -- (9,0);
\draw [line width=2pt] (0,-.2)--(0,.2);
\draw [line width=2pt] (9,-.2)--(9,.2);
\draw [line width=1pt] (1,-.1)--(1,.1);
\draw [line width=1pt] (2,-.1)--(2,.1);
\draw [line width=1pt] (3,-.1)--(3,.1);
\draw [line width=1pt] (4,-.1)--(4,.1);
\draw [line width=1pt] (5,-.1)--(5,.1);
\draw [line width=1pt] (6,-.1)--(6,.1);
\draw [line width=1pt] (7,-.1)--(7,.1);
\draw [line width=1pt] (8,-.1)--(8,.1);
\node [right] at (0,.3)  {$y_{1}$};
\node [right] at (1,.3)  {$y_{2}$};
\node at (8.5,.3)  {$y_{n\FL{\frac mn}}$};
\node at (-1,0) {$Y_1$};


\draw [line width=2pt] (0.5,.8) -- (9.5,.8);
\draw [line width=2pt] (0.5,.6)--(0.5,1);
\draw [line width=2pt] (3.5,.6)--(3.5,1);
\draw [line width=2pt] (6.5,.6)--(6.5,1);
\draw [line width=2pt] (9.5,.6)--(9.5,1);
\draw [line width=1pt] (1.5,.7)--(1.5,.9);
\draw [line width=1pt] (2.5,.7)--(2.5,.9);
\draw [line width=1pt] (4.5,.7)--(4.5,.9);
\draw [line width=1pt] (5.5,.7)--(5.5,.9);
\draw [line width=1pt] (7.5,.7)--(7.5,.9);
\draw [line width=1pt] (8.5,.7)--(8.5,.9);
\node [right] at (0.5,1.1)  {$x_{1}$};
\node [right] at (3.5,1.1)  {$x_{1}$};
\node [right] at (6.5,1.1)  {$x_{1}$};
\node [right] at (2.5,1.1)  {$x_{n}$};
\node [right] at (5.5,1.1)  {$x_{n}$};
\node [right] at (8.5,1.1)  {$x_{n}$};
\node at (-1,.8) {$X_1$};


\draw [shift={(1,1.6)}, line width=2pt] (0,0) -- (9,0);
\draw [shift={(1,1.6)}, line width=2pt] (0,-.2)--(0,.2);
\draw [shift={(1,1.6)}, line width=2pt] (9,-.2)--(9,.2);
\draw [shift={(1,1.6)}, line width=1pt] (1,-.1)--(1,.1);
\draw [shift={(1,1.6)}, line width=1pt] (2,-.1)--(2,.1);
\draw [shift={(1,1.6)}, line width=1pt] (3,-.1)--(3,.1);
\draw [shift={(1,1.6)}, line width=1pt] (4,-.1)--(4,.1);
\draw [shift={(1,1.6)}, line width=1pt] (5,-.1)--(5,.1);
\draw [shift={(1,1.6)}, line width=1pt] (6,-.1)--(6,.1);
\draw [shift={(1,1.6)}, line width=1pt] (7,-.1)--(7,.1);
\draw [shift={(1,1.6)}, line width=1pt] (8,-.1)--(8,.1);
\node [shift={(1,1.6)}, right] at (0,.3)  {$y_{4}$};
\node [shift={(1,1.6)}, right] at (1,.3)  {$y_{5}$};
\node [shift={(1,1.6)}] at (8.5,.3)  {$y_{3}$};
\node [shift={(0,1.6)}] at (-1,0) {$Y_2$};


\draw [shift={(1,1.6)}, line width=2pt] (0.5,.8) -- (9.5,.8);
\draw [shift={(1,1.6)}, line width=2pt] (0.5,.6)--(0.5,1);
\draw [shift={(1,1.6)}, line width=2pt] (3.5,.6)--(3.5,1);
\draw [shift={(1,1.6)}, line width=2pt] (6.5,.6)--(6.5,1);
\draw [shift={(1,1.6)}, line width=2pt] (9.5,.6)--(9.5,1);
\draw [shift={(1,1.6)}, line width=1pt] (1.5,.7)--(1.5,.9);
\draw [shift={(1,1.6)}, line width=1pt] (2.5,.7)--(2.5,.9);
\draw [shift={(1,1.6)}, line width=1pt] (4.5,.7)--(4.5,.9);
\draw [shift={(1,1.6)}, line width=1pt] (5.5,.7)--(5.5,.9);
\draw [shift={(1,1.6)}, line width=1pt] (7.5,.7)--(7.5,.9);
\draw [shift={(1,1.6)}, line width=1pt] (8.5,.7)--(8.5,.9);
\node [shift={(1,1.6)}, right] at (0.5,1.1)  {$x_{n}$};
\node [shift={(1,1.6)}, right] at (3.5,1.1)  {$x_{n}$};
\node [shift={(1,1.6)}, right] at (6.5,1.1)  {$x_{n}$};
\node [shift={(1,1.6)}, right] at (2.5,1.1)  {$x_{n-1}$};
\node [shift={(1,1.6)}, right] at (5.5,1.1)  {$x_{n-1}$};
\node [shift={(1,1.6)}, right] at (8.5,1.1)  {$x_{n-1}$};
\node [shift={(0,1.6)}] at (-1,.8) {$X_2$};


\draw [shift={(2,3.2)}, line width=2pt] (0,0) -- (9,0);
\draw [shift={(2,3.2)}, line width=2pt] (0,-.2)--(0,.2);
\draw [shift={(2,3.2)}, line width=2pt] (9,-.2)--(9,.2);
\draw [shift={(2,3.2)}, line width=1pt] (1,-.1)--(1,.1);
\draw [shift={(2,3.2)}, line width=1pt] (2,-.1)--(2,.1);
\draw [shift={(2,3.2)}, line width=1pt] (3,-.1)--(3,.1);
\draw [shift={(2,3.2)}, line width=1pt] (4,-.1)--(4,.1);
\draw [shift={(2,3.2)}, line width=1pt] (5,-.1)--(5,.1);
\draw [shift={(2,3.2)}, line width=1pt] (6,-.1)--(6,.1);
\draw [shift={(2,3.2)}, line width=1pt] (7,-.1)--(7,.1);
\draw [shift={(2,3.2)}, line width=1pt] (8,-.1)--(8,.1);
\node [shift={(2,3.2)}, right] at (0,.3)  {$y_{7}$};
\node [shift={(2,3.2)}, right] at (1,.3)  {$y_{8}$};
\node [shift={(2,3.2)}] at (8.5,.3)  {$y_{6}$};
\node [shift={(0,3.2)}] at (-1,0) {$Y_3$};


\draw [shift={(2,3.2)}, line width=2pt] (0.5,.8) -- (9.5,.8);
\draw [shift={(2,3.2)}, line width=2pt] (0.5,.6)--(0.5,1);
\draw [shift={(2,3.2)}, line width=2pt] (3.5,.6)--(3.5,1);
\draw [shift={(2,3.2)}, line width=2pt] (6.5,.6)--(6.5,1);
\draw [shift={(2,3.2)}, line width=2pt] (9.5,.6)--(9.5,1);
\draw [shift={(2,3.2)}, line width=1pt] (1.5,.7)--(1.5,.9);
\draw [shift={(2,3.2)}, line width=1pt] (2.5,.7)--(2.5,.9);
\draw [shift={(2,3.2)}, line width=1pt] (4.5,.7)--(4.5,.9);
\draw [shift={(2,3.2)}, line width=1pt] (5.5,.7)--(5.5,.9);
\draw [shift={(2,3.2)}, line width=1pt] (7.5,.7)--(7.5,.9);
\draw [shift={(2,3.2)}, line width=1pt] (8.5,.7)--(8.5,.9);
\node [shift={(2,3.2)}, right] at (0.5,1.1)  {$x_{n-1}$};
\node [shift={(2,3.2)}, right] at (3.5,1.1)  {$x_{n-1}$};
\node [shift={(2,3.2)}, right] at (6.5,1.1)  {$x_{n-1}$};
\node [shift={(2,3.2)}, right] at (2.5,1.1)  {$x_{n-2}$};
\node [shift={(2,3.2)}, right] at (5.5,1.1)  {$x_{n-2}$};
\node [shift={(2,3.2)}, right] at (8.5,1.1)  {$x_{n-2}$};
\node [shift={(0,3.2)}] at (-1,.8) {$X_3$};


\draw [shift={(3.5,5.6)}, line width=2pt] (0.5,.8) -- (9.5,.8);
\draw [shift={(3.5,5.6)}, line width=2pt] (0.5,.6)--(0.5,1);
\draw [shift={(3.5,5.6)}, line width=2pt] (3.5,.6)--(3.5,1);
\draw [shift={(3.5,5.6)}, line width=2pt] (6.5,.6)--(6.5,1);
\draw [shift={(3.5,5.6)}, line width=2pt] (9.5,.6)--(9.5,1);
\draw [shift={(3.5,5.6)}, line width=1pt] (1.5,.7)--(1.5,.9);
\draw [shift={(3.5,5.6)}, line width=1pt] (2.5,.7)--(2.5,.9);
\draw [shift={(3.5,5.6)}, line width=1pt] (4.5,.7)--(4.5,.9);
\draw [shift={(3.5,5.6)}, line width=1pt] (5.5,.7)--(5.5,.9);
\draw [shift={(3.5,5.6)}, line width=1pt] (7.5,.7)--(7.5,.9);
\draw [shift={(3.5,5.6)}, line width=1pt] (8.5,.7)--(8.5,.9);
\node [shift={(3.5,5.6)}, right] at (0.5,.4)  {$x_{n-\FL{\frac n4}+2}$};
\node [shift={(3.5,5.6)}, right] at (8.5,.4)  {$x_{n-\FL{\frac n4}+1}$};
\node [shift={(0,5.6)}] at (-1,.8) {$X_{\FL{n/4}}$};


\draw [shift={(4.5,7.2)}, line width=2pt] (0,0) -- (9,0);
\draw [shift={(4.5,7.2)}, line width=2pt] (0,-.2)--(0,.2);
\draw [shift={(4.5,7.2)}, line width=2pt] (9,-.2)--(9,.2);
\draw [shift={(4.5,7.2)}, line width=1pt] (1,-.1)--(1,.1);
\draw [shift={(4.5,7.2)}, line width=1pt] (2,-.1)--(2,.1);
\draw [shift={(4.5,7.2)}, line width=1pt] (3,-.1)--(3,.1);
\draw [shift={(4.5,7.2)}, line width=1pt] (4,-.1)--(4,.1);
\draw [shift={(4.5,7.2)}, line width=1pt] (5,-.1)--(5,.1);
\draw [shift={(4.5,7.2)}, line width=1pt] (6,-.1)--(6,.1);
\draw [shift={(4.5,7.2)}, line width=1pt] (7,-.1)--(7,.1);
\draw [shift={(4.5,7.2)}, line width=1pt] (8,-.1)--(8,.1);
\node [shift={(4.5,7.2)}, right] at (0,.4)  {$y_{3\FL{\frac n4}+1}$};
\node [shift={(4.5,7.2)}] at (8.5,.4)  {$y_{3\FL{\frac n4}}$};
\node [shift={(0,7.2)}] at (-1,0) {$Y_{\FL{n/4}+1}$};

\node at (-1,5.2) {$\vdots$};
\draw [line width=2pt, dotted] (3,4.6)--(4,5.94);
\draw [line width=2pt, dotted] (11,4.6)--(12,5.94);

\end{tikzpicture}
\caption{The line segments in Construction~\ref{const:Kmo(m)}.}\label{fig:Kmo(m0}
\end{figure}
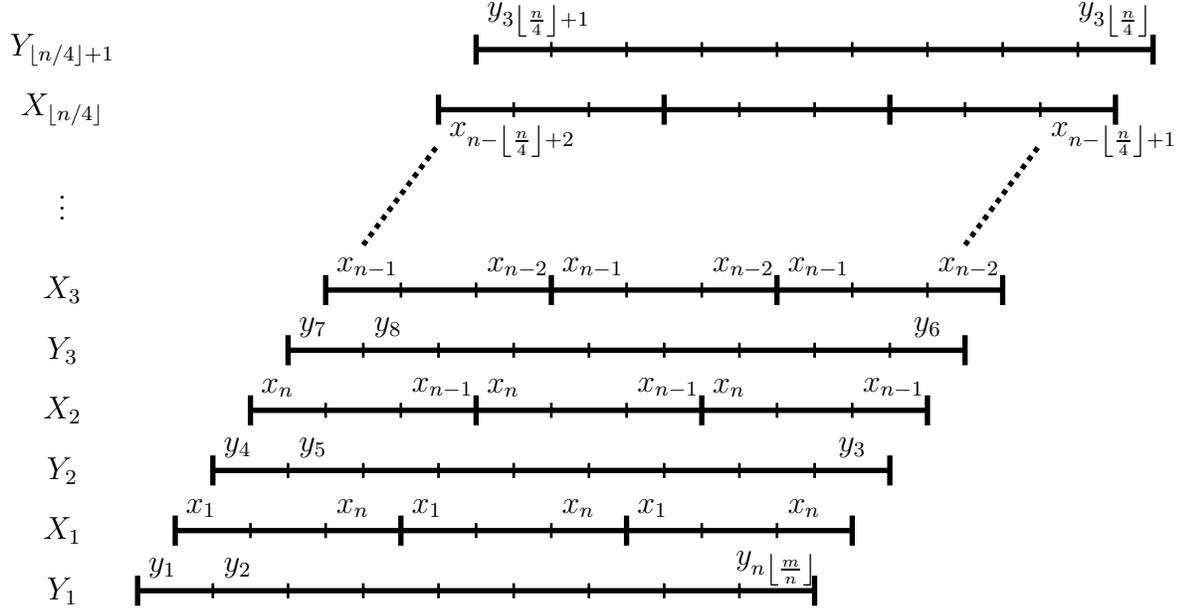

\begin{lemma}\label{lem:KmnUBo(m)}
The unit bar visibility layout of Construction~\ref{const:Kmo(m)} is a $t$-unit-bar visibiliy representation of $K_{m,n}$ with $t\le \frac m4+n+1$.
\end{lemma}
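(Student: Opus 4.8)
The plan is to mirror the proof of Lemma~\ref{lem:KmnUBcm}: first determine, bar by bar, the visibilities inside the staircase of segments; then check that these visibilities, together with the pairs added on the right, realize $K_{m,n}$ and no edges inside a partite set; and finally bound the number of bars on the busiest vertex. First I would fix notation: for $i\in[n]$, $j\in[\FL{n/4}]$, and a copy index $c\in\{0,\dots,\FL{m/n}-1\}$, let $b_{i,j,c}$ be the bar of $x_i$ in the $c$th copy of $X$ inside $X_j$, and for $i\le n\FL{m/n}$ and $j\in[\FL{n/4}+1]$ let $\beta_{i,j}$ be the bar of $y_i$ in $Y_j$ when it exists. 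A direct computation with the left endpoints $(j-1,j-1)$ and $(j-\frac12,j-\frac12)$, identical in spirit to the one in Lemma~\ref{lem:KmnUBcm}, shows that a bar in position $k$ of $X_j$ sees exactly the bars in positions $k,k+1$ of $Y_j$ below it and positions $k-1,k$ of $Y_{j+1}$ above it, and that these four bars carry the consecutive labels $y_{3j-2+k},\dots,y_{3j+1+k}$ (indices taken modulo $n\FL{m/n}$); a leftmost or rightmost bar of a segment loses one of these visibilities. As in that proof, I would record the consequence that the $Y$-label windows seen by the various bars of a fixed $x_i$ are pairwise disjoint, and symmetrically that the $X$-labels seen by the bars of a fixed $y_i$ are pairwise disjoint.

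Next I would verify that the layout represents $K_{m,n}$ exactly. Since each $Y_j$ is a single contiguous block of length $n\FL{m/n}$ and consecutive segments are shifted horizontally by only $\frac12$, the horizontal overlap of any two $X$-bars from consecutive $X$-segments lies inside the projection of the intervening $Y$-segment; hence no two $X$-bars see each other, and symmetrically no two $Y$-bars see each other. The bars appended on the right are placed in mutually isolated visible pairs, so every visibility in the layout joins $X$ to $Y$, and by construction each pair $(x_i,y_{i'})$ not already a visibility receives one. Thus the layout is a genuine $t$-unit-bar visibility representation of $K_{m,n}$, and all that remains is the count.

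It then remains to bound $t$ by $\frac m4+n+1$, and this is the step I expect to be delicate. A vertex $x_i$ receives $\FL{n/4}\FL{m/n}\le\frac m4$ bars from the segments and one appended bar for each vertex of $Y$ it fails to see there, so the goal is to show that each $x_i$ misses at most $n+1$ vertices of $Y$. The unseen vertices are of two kinds: the fewer than $n$ vertices $y_{n\FL{m/n}+1},\dots,y_m$ that lie in no segment, and any segment-vertex whose label residue modulo $n$ is missed by the width-$4$ windows of $x_i$. Controlling the second kind is the real obstacle: one must track how the three offsets—the $+3$ shift between successive $Y$-segments, the $-1$ shift between successive $X$-segments, and the four consecutive labels in each window—interact with the floor functions and the two boundary $Y$-segments, and show that the residues covered by $x_i$ as $j$ ranges over $1,\dots,\FL{n/4}$ exhaust every residue class modulo $n$ up to an $O(1)$ deficit. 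The $Y$-side is comparatively routine: a segment vertex $y_i$ lies in $\FL{n/4}+1$ segments whose bars see $4(\FL{n/4}+1)\ge n$ vertices of $X$, so $y_i$ needs only a bounded number of appended bars and at most about $\frac n4$ bars in all, whereas a segment-absent vertex of $Y$ uses exactly $n$ bars. Given the $X$-side estimate, taking the maximum over all vertices yields $t\le\frac m4+n+1$. The hard part will be exactly this $X$-side miss-count: the geometry and the absence of spurious edges parallel Lemma~\ref{lem:KmnUBcm}, but verifying that the residue coverage leaves only the additive ``$+n$'' worth of vertices unseen is where the genuine care is required.
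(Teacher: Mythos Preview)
Your overall strategy matches the paper's: compute the four consecutive $y$-labels seen by each bar of $x_i$ in $X_j$ (the paper records these as the residues $i+4j-4,\dots,i+4j-1\pmod n$), observe that each $x_i$ is a leftmost or rightmost bar at most once, and then count. The paper's $X$-side count is simply $\FL{n/4}\FL{m/n}+2+(n-1)\le\frac m4+n+1$: two appended bars for the two boundary losses, and at most $n-1$ more for the $y$-vertices that lie in no segment. On the $Y$-side the paper is even coarser than you are, bounding the appended bars by $n$ outright.

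The step you flag as ``the real obstacle'' is a genuine gap, and it cannot be closed to give the stated bound. As $j$ ranges over $[\FL{n/4}]$, the residue classes hit by $x_i$ are exactly the $4\FL{n/4}$ consecutive classes $i,i+1,\dots,i+4\FL{n/4}-1\pmod n$; when $4\nmid n$ this leaves $n\bmod 4\in\{1,2,3\}$ classes uncovered. So your ``$O(1)$ deficit'' is correct at the level of \emph{residues}, but each missing residue class contains $\FL{m/n}$ segment vertices, so the deficit in \emph{vertices} is $(n\bmod 4)\FL{m/n}$, not $O(1)$. For instance, with $n=5$ and $m=100$ there is a single $X$-segment, every $x_i$ misses an entire residue class of twenty vertices $y_5,y_{10},\dots,y_{100}$, and each $x_i$ ends up with roughly $40$ bars, against the claimed bound $\frac m4+n+1=31$. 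Thus your target ``each $x_i$ misses at most $n+1$ vertices'' is false in general, and the paper's ``at most two additional bars for segment vertices'' tacitly assumes $4\FL{n/4}=n$. What the construction actually yields is $t\le\frac m4+n+O(m/n)$, which still suffices for the $\frac m4+o(m)$ conclusion of Theorem~\ref{thm:KmnUB} but not for the lemma as stated.
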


\begin{proof}
Fix $j\in [\FL{n/4}]$ and $i\in[n]$.
By construction, if a bar in $X_j$ that is assigned to $x_{i}$ sees a bar that is assigned to $y_{i'}$, then $i'\in [n\FL{m/n}]$ and $i'\equiv i+4j-4,i+4j-3,i+4j-2$, or $i+4j-1\pmod n$.
The leftmost and rightmost bars of $X_j$ see three other bars, and all other bars of $X_j$ see four other bars.
Furthermore, the bars in $X_j$ that are assigned to $x_{i}$ see pairwise disjoint sets of bars.

Each vertex in $X$ is assigned to the leftmost bar of at most one segment and the rightmost bar of at most one segment.
Therefore each vertex in $X$ is assigned at most two additional bars outside of the segments for visibility to vertices in $\{y_1,\ldots,y_{n\FL{m/n}}\}$.
Each vertex in $X$ is also assigned at most $n-1$ bars for visibility to vertices in $\{y_{n\FL{m/n}+1},\ldots,y_m\}$.
Thus each vertex in $X$ is assigned at most $\FL{\frac{n}{4}}\FL{\frac{m}{n}}+2+n-1\le \frac{m}{4}+n+1$ bars.

Each vertex in $Y$ is assigned at most $n$ additional bars outside of the segments.
Thus each vertex in $Y$ is assigned at most $\FL{n/4}+1+n$ bars.
Since $m\ge n$, it follows that each vertex in $Y$ is assigned at most $\frac m4+1+n$ bars.
\end{proof}

\begin{theorem}\label{thm:KmnUB}
For positive integers $m$ and $n$ with $m\ge n\ge 2$, 
$$ub(K_{m,n})\le \min\left\{\CL{\frac m4}+\frac{2m}{n}+12,\frac m4+n+1\right\}.$$
Therefore $ub(K_{m,n})\le \frac{m}{4}+o(m)$.
\end{theorem}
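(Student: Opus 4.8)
The first inequality requires essentially no new work: Construction~\ref{Knmconst} together with Lemma~\ref{lem:KmnUBcm} exhibits a $t$-unit-bar visibility representation of $K_{m,n}$ with $t\le \CL{\frac m4}+\frac{2m}{n}+12$, and Construction~\ref{const:Kmo(m)} together with Lemma~\ref{lem:KmnUBo(m)} exhibits one with $t\le \frac m4+n+1$. Since $ub(K_{m,n})$ is by definition the minimum of $t$ over all valid $t$-unit-bar visibility representations, it is bounded above by each of these two quantities, hence by their minimum. So my plan for the first statement is simply to cite the two lemmas and take the smaller bound; nothing else is needed.

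The substance of the theorem is therefore the asymptotic claim $ub(K_{m,n})\le \frac m4+o(m)$, and the point is that neither bound alone suffices uniformly in $n$: the first bound is good only when $n$ is large (the term $\frac{2m}{n}$ is then small), while the second is good only when $n$ is small (the term $n$ is then small). My plan is to balance the two by splitting on the size of $n$ relative to $\sqrt m$. Concretely, I would argue that if $n\ge \sqrt m$ then $\frac{2m}{n}\le 2\sqrt m$, so the first bound gives $ub(K_{m,n})\le \CL{\frac m4}+2\sqrt m+12$; and if $n<\sqrt m$ then the second bound gives $ub(K_{m,n})\le \frac m4+\sqrt m+1$. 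In either regime $ub(K_{m,n})\le \frac m4+2\sqrt m+13$, and since $2\sqrt m+13=o(m)$ as $m\to\infty$, the asymptotic bound follows with an error term that is uniform over all admissible $n$ in the range $2\le n\le m$.

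There is no serious obstacle here beyond the two constructions, which are already established; the entire difficulty of the theorem lives in Lemmas~\ref{lem:KmnUBcm} and~\ref{lem:KmnUBo(m)}. The only genuine choice is the threshold for the case split. Any threshold of the form $n=m^{\alpha}$ with $0<\alpha<1$ works, and $\alpha=\tfrac12$ is the choice that makes the two error contributions $\frac{2m}{n}\le 2m^{1-\alpha}$ and $n\le m^{\alpha}$ of the same order, yielding the smallest combined error, of order $\sqrt m$. One should double-check the harmless off-by-one contributions coming from the ceiling $\CL{m/4}\le \frac m4+1$ and confirm that the two cases exhaust $2\le n\le m$ for all sufficiently large $m$ (so that $\sqrt m\ge 2$), but both checks are routine.
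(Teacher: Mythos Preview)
Your proposal is correct and essentially identical to the paper's own proof: the paper also cites Lemmas~\ref{lem:KmnUBcm} and~\ref{lem:KmnUBo(m)} for the first inequality and then splits on the size of $n$ to derive the asymptotic bound. The only cosmetic difference is the choice of threshold---the paper uses $n=6+\sqrt{36+2m}$ (the exact balancing point of the two error terms) rather than your $n=\sqrt m$, but both yield an $O(\sqrt m)$ error uniformly in $n$.
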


\begin{proof}
The first statement follows immediately from Lemmas~\ref{lem:KmnUBcm} and~\ref{lem:KmnUBo(m)}.
For the second statement we use $ub(K_{m,n})\le \min\left\{\frac m4+\frac{2m}{n}+13,\frac m4+n+1\right\}$.
If $n\le 6+\sqrt{36+2m}$, then $ub(K_{m,n})\le \frac m4+(6+\sqrt{36+2m})+1$.
If $n\ge 6+\sqrt{36+2m}$, then $ub(K_{m,n})\le \frac m4+\frac{2m}{6+\sqrt{36+2m}}+12$.
In both cases, $ub(K_{m,n})\le \frac m4+o(m)$.
\end{proof}

We next prove a general lower bound on the unit bar visibility number of $K_{m,n}$.
We refer to the bars in a $t$-unit-bar visibility representation of $K_{m,n}$ that correspond to vertices in $Y$ and $X$ as {\it $Y$-bars} and {\it $X$-bars}, respectively.

\begin{theorem}~\label{thm:bipartitem/5}
For positive integers $m$ and $n$ with $m\ge n\ge 2$, $ub(K_{m,n})\ge \CL{\frac m5}$.
\end{theorem}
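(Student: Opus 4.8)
The plan is to fix an optimal $t$-unit-bar visibility representation $R$ of $K_{m,n}$; by Lemma~\ref{lem:distincty} I may assume all bars have distinct $y$-coordinates. Write $p$ for the number of $Y$-bars and $q$ for the number of $X$-bars, so that $p\le mt$ and $q\le nt$ since each vertex is assigned at most $t$ bars. Because $Y$ and $X$ are independent sets, two $Y$-bars of distinct vertices never see each other, and likewise for $X$-bars; hence every visibility realizing an edge of $K_{m,n}$ runs between an $X$-bar and a $Y$-bar. The bipartite graph $H$ whose vertices are the bars and whose edges are the $X$-bar/$Y$-bar visibilities is therefore bipartite and planar, so $|E(H)|\le 2(p+q)-4$, while each of the $mn$ edges of $K_{m,n}$ forces at least one visibility, giving $|E(H)|\ge mn$. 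This purely combinatorial count only yields $t\ge \tfrac{mn}{2(m+n)}$, which degrades badly when $n$ is small; to obtain the uniform bound $\CL{m/5}$ I expect to exploit the unit-length geometry directly.

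The key tool I would develop is a structural lemma about what a single unit bar can see. For a bar $b$ with $\proj(b)=[x_b,x_b+1]$, any bar $b'$ that $b$ sees above has $\proj(b')$ meeting this unit-length window in a subinterval that necessarily contains one of the two endpoints $x_b$ or $x_b+1$ (since $b'$ also has length $1$). Consequently the bars $b$ sees above split into a ``left-anchored'' family (whose projections all contain $x_b$, hence pairwise overlap) and a ``right-anchored'' family (pairwise overlapping at $x_b+1$), so at most two of the bars seen above $b$ are pairwise nonoverlapping, and similarly below. Combined with the observation that consecutive bars in a common vertical stack see one another, this forces the following: if an $X$-bar is seen by two independent $Y$-bars whose projections overlap, some intervening bar must separate them. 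I would then charge the $m$ required $Y$-vertices against the $X$-bars (or, dually, count visibilities against bars) so that each unit of ``seeing capacity'' is accounted for together with the separating bars it consumes, and push the resulting inequality to the form $m\le 5t$, whence $t\ge \CL{m/5}$.

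The hard part will be the accounting that produces exactly the constant $5$ (and keeps it independent of $n$). The naive hope that a single $X$-bar can see only a bounded number of distinct $Y$-vertices is \emph{false}: by inserting separator bars whose right ends align precisely with the right ends of a left-anchored staircase, one can arrange a single $X$-bar to see arbitrarily many pairwise-independent $Y$-bars, so the per-bar bound collapses. The separators, however, are themselves bars that must be paid for, and this is the mechanism the proof has to capture globally rather than locally. Thus the main obstacle is turning the local window-endpoint lemma into a discharging or Euler-type argument on the planar visibility graph that amortizes these separators correctly, simultaneously matching the coefficient $\tfrac15$ (note the gap to the construction's $\tfrac14$ is precisely this slack) and avoiding any dependence on $n$, so that the estimate holds for all $m\ge n\ge 2$.
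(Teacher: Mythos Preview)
Your proposal correctly isolates the geometric point (each unit bar seen by a fixed bar $b$ has projection containing $x_b$ or $x_b+1$) and correctly recognizes that separator bars are what allow a single $X$-bar to see many $Y$-bars, so those separators must be charged for. But you stop precisely where the work begins: you leave the accounting as an unspecified ``discharging or Euler-type argument,'' and your own discussion shows you do not yet have a mechanism that produces the constant $5$ independently of $n$. As written, this is a plan with an explicitly acknowledged hole, not a proof.

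The paper closes exactly this gap, and not by a global discharging at all. It gives a local edge-assignment rule: for each edge $xy$ it fixes a line of sight with left boundary $\ell_{xy}$ and classifies it into four types. Types~1 and~2 (where $\ell_{xy}$ passes through the left endpoint of $b_x$, or through the left endpoint of $b_y$ with no overlapping $Y$-bar behind it) assign $xy$ to $b_x$ itself. The decisive twist is Types~3 and~4: when two overlapping $Y$-bars are both visible from $b_x$, there must exist an aligned $X$-bar $b_{x'}$ playing exactly your separator role, and the edge $xy$ is assigned to that blocker $b_{x'}$, not to $b_x$. One then checks directly that a single $X$-bar receives at most two Type~1 edges, at most two Type~2 edges, and at most one each of Types~3 and~4; moreover, receiving a Type~2 edge from above (respectively below) is incompatible with blocking for a bar above (respectively below), which caps the total at five. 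Summing over the $nt$ $X$-bars gives $mn\le 5nt$, hence $t\ge\CL{m/5}$. The idea you were reaching for---pay for separators---is implemented by reassigning the edge to the separator rather than by any amortized or Euler-formula count.
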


\begin{proof}
Again, we let $K_{m,n}$ have vertex set $X\cup Y$, where $|X|=n$ and $|Y|=m$.
Let $R$ be a $t$-unit-bar visibility representation of $K_{m,n}$.
We will assign the edges in $K_{m,n}$ to the $X$-bars in $R$.
For each edge $xy$ in $K_{m,n}$, pick a line of sight in $R$ between bars $b_x$ and $b_y$ for $x$ and $y$, respectively.
Each line of sight defines a maximal axis-aligned rectangle whose interior has empty intersection with all bars; we select the left edge of the rectangle and call this line segment $\ell_{xy}$.
If $\ell_{xy}$ contains an interior point of $b_x$, we can refer to the line of sight to the \textit{left} of $\ell_{xy}$: this is the line of sight from a point arbitrarily close to the intersection of $b_x$ and $\ell_{xy}$ on the left that runs in the same direction from $b_x$ as $\ell_{xy}$.
Note that the line of sight to the left of $\ell_{xy}$ may not be used the in assignment of edges to lines of sight, or may not connect $b_x$ to another bar.
We say that two bars are {\it aligned} if their left endpoint has the same $x$-coordinate (that is, the bars have the same projection onto the $x$-axis).
We assign the edge $xy$ to a bar in $R$ as follows (see Figure~\ref{fig:edgeassign}):

\begin{center}
\begin{tabular}{rl}
{\bf Type 1:}& $\ell_{xy}$ contains the left endpoint of $b_x$. 
Assign $xy$ to $b_x$.\\
&\\
{\bf Type 2:}& 
\begin{minipage}{5in}
$\ell_{xy}$ contains an interior point of $b_x$ and the left endpoint of $b_y$, and the 
\end{minipage}\\

& 
\begin{minipage}{5in}
line of sight to the left of $\ell_{xy}$ either sees no other bar or sees a bar $b_{y'}$ where $b_{y'}$ does not overlap $b_y$.
Assign $xy$ to $b_x$.
\end{minipage}\\
&\\

{\bf Type 3:}& 
\begin{minipage}{5in}
$\ell_{xy}$ contains an interior point of $b_x$ and the left endpoint of $b_y$, and the 
\end{minipage}\\
&\begin{minipage}{5in}
line of sight to the left of $\ell_{xy}$ connects $b_x$ to a bar $b_{y'}$ where $b_{y'}$ overlaps $b_y$.
In this case, there is a bar $b_{x'}$ assigned to a vertex $x'\in X$ such $b_{x'}$ and $b_y$ are aligned, and the continuation of $\ell_{xy}$ through $b_y$ is the left edge of a line of sight between $b_y$ and $b_{x'}$.
Assign $xy$ to $b_{x'}$.\end{minipage}\\
&\\

{\bf Type 4:}& 
\begin{minipage}{5in}
$\ell_{xy}$ contains an interior point of both $b_x$ and $b_y$.
It follows that there \end{minipage}\\
&
\begin{minipage}{5in}are bars $b_{y'}$ and $b_{x'}$ assigned to vertices $y'\in Y$ and $x'\in X$ so that $b_x$ can see $b_{y'}$, $b_{x'}$ and $b_{y'}$ are aligned, $\ell_{xy}$ contains the right endpoints of $b_{y'}$ and $b_{x'}$, and $\ell_{xy}$ is the right edge of a line of sight between $b_{y'}$ and $b_{x'}$.
Assign $xy$ to $b_{x'}$.\end{minipage}\\

\end{tabular}
\end{center}

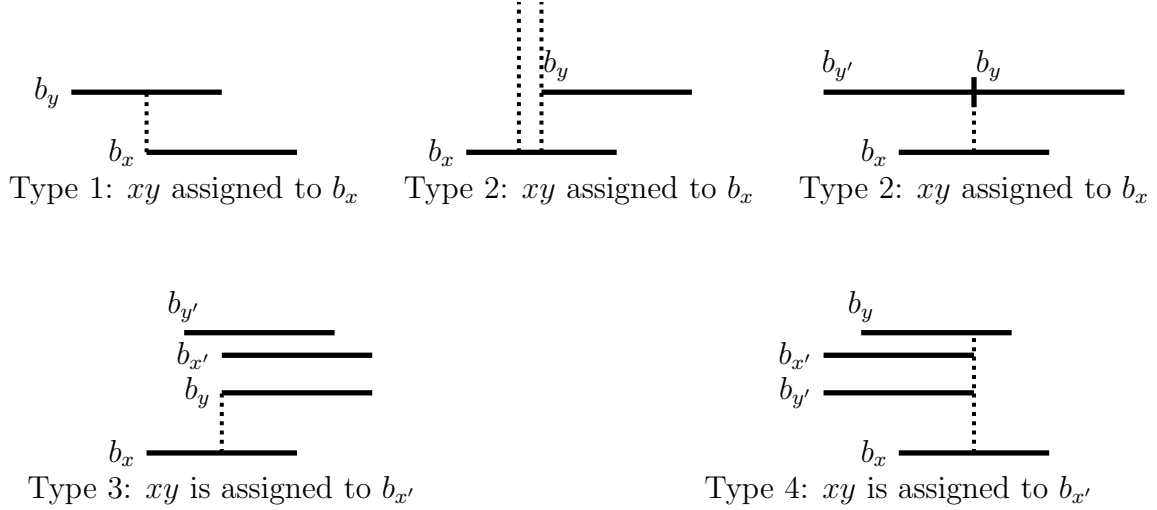
\begin{figure}
\centering
\begin{tikzpicture}
\draw [line width=2pt] (0,0)--(2,0);
\node [left] at (0,0) {$b_x$};
\draw [line width=2pt] (-1,.8)--(1,.8);
\node [left] at (-1,.8) {$b_y$};
\draw [dotted, line width =1.5pt] (0,0)--(0,.8);
\node at (.5,-.5) {Type 1: $xy$ assigned to $b_x$};

\draw [line width=2pt] (4.25,0)--(6.25,0);
\node [left] at (4.25,0) {$b_x$};
\draw [line width=2pt] (5.25,.8)--(7.25,.8);
\node [above] at (5.45,.8) {$b_y$};
\draw [dotted, line width =1.5pt] (5.25,0)--(5.25,2);
\draw [dotted, line width =1.5pt] (4.95,0)--(4.95,2);
\node at (5.75,-.5) {Type 2: $xy$ assigned to $b_x$};

\draw [shift={(6,0)},line width=2pt] (4,0)--(6,0);
\node [shift={(6,0)},left] at (4,0) {$b_x$};
\draw [shift={(6,0)},line width=2pt] (5,.8)--(7,.8);
\node [shift={(6,0)},above] at (5.2,.8) {$b_y$};
\draw [shift={(6,0)},dotted, line width =1.5pt] (5,0)--(5,.8);
\draw [shift={(6,0)}, line width =2pt] (3,.8)--(5,.8);
\node [shift={(6,0)},above] at (3.2,.8) {$b_{y'}$};
\draw [shift={(6,0)}, line width =2pt] (5,.6)--(5,1);
\node [shift={(6,0)}] at (5,-.5) {Type 2: $xy$ assigned to $b_x$};

\draw [shift={(-4,-4)},line width=2pt] (4,0)--(6,0);
\node [shift={(-4,-4)},left] at (4,0) {$b_x$};
\draw [shift={(-4,-4)},line width=2pt] (5,.8)--(7,.8);
\node [shift={(-4,-4)},left] at (5,.8) {$b_y$};
\draw [shift={(-4,-4)},dotted, line width =1.5pt] (5,0)--(5,.8);
\draw [shift={(-4,-4)}, line width =2pt] (4.5,1.6)--(6.5,1.6);
\node [shift={(-4,-4)},above] at (4.5,1.6) {$b_{y'}$};
\draw [shift={(-4,-4)}, line width =2pt] (5,1.3)--(7,1.3);
\node [shift={(-4,-4)},left] at (5,1.3) {$b_{x'}$};
\node [shift={(-4,-4)}] at (5,-.5) {Type 3: $xy$ is assigned to $b_{x'}$};

\draw [shift={(4,-4)},line width=2pt] (6,0)--(8,0);
\node [shift={(4,-4)},left] at (6,0) {$b_x$};
\draw [shift={(4,-4)},line width=2pt] (5,.8)--(7,.8);
\node [shift={(4,-4)},left] at (5,.8) {$b_{y'}$};
\draw [shift={(4,-4)},dotted, line width =1.5pt] (7,0)--(7,1.6);
\draw [shift={(4,-4)}, line width =2pt] (5.5,1.6)--(7.5,1.6);
\node [shift={(4,-4)},above] at (5.5,1.6) {$b_{y}$};
\draw [shift={(4,-4)}, line width =2pt] (5,1.3)--(7,1.3);
\node [shift={(4,-4)},left] at (5,1.3) {$b_{x'}$};
\node [shift={(4,-4)}] at (6,-.5) {Type 4: $xy$ is assigned to $b_{x'}$};
\end{tikzpicture}
\caption{The assignment of edges to bars from the proof of Theorem~\ref{thm:bipartitem/5}.
Each type can also occur with $b_x$ above $b_y$.}\label{fig:edgeassign}
\end{figure}

For edges of Types 3 and 4, we say that $b_{x'}$ {\it blocks} for $b_x$, since the presence of $b_{x'}$ allows $b_{x}$ to see two overlapping bars assigned to vertices in $Y$.
For Type 3, we say that $b_{x'}$ {\it blocks on the left} and for Type 4 we say that $b_{x'}$ {\it blocks on the right}.

It is clear that $b_x$ is assigned at most two edges of Type 1, since the left endpoint of $b_x$ can see at most two other bars.
It is also clear that $b_x$ can block on the left for at most one bar and can block on the right for at most one bar, so it is assigned at most one edge of Type 3 and at most one edge of Type 4.
If $b_x$ is able to see the left endpoint of two $Y$-bars above, then those bars must overlap.
Therefore, some bar blocks on the left for $b_x$, and that bar is assigned the edge joining $x$ to the $Y$-bar that is farther to the right (which is also the lower of the two).
Thus at most one of the upward visibilities of $b_x$ is assigned to it as an edge of Type 2.
Similarly, at most one of the downward visibilities of $b_x$ is assigned to it as an edge of Type 2.
Therefore $b_x$ is assigned at most two edges of Type 2.

Suppose that $b_x$ is assigned an edge of Type 2, and that this edge corresponds to a line of sight to a bar $b_y$ above $b_x$.
In this case, the left endpoint of $b_x$ cannot see the left endpoint of a $Y$-bar above $b_x$ as such a bar would overlap with $b_y$.
In this case, $xy$ would be an edge of Type $3$ that is assigned to some other bar.
Similarly, the right endpoint of $b_x$ cannot see the right endpoint of a $Y$-bar above $b_x$ since the right endpoint of $b_x$ sees an interior point of $b_y$ or a $Y$-bar that is to the right of and below $b_y$.
Therefore, if $b_x$ is assigned an edge of Type $2$ with a line of sight above $b_x$, then $b_x$ cannot be assigned an edge of Type 3 or 4 from an $X$-bar that is above $b_x$.
Similarly, if $b_x$ is assigned an edge of Type $2$ with a line of sight below $b_x$, then $b_x$ cannot be assigned an edge of Type 3 or 4 from an $X$-bar that is below $b_x$.
It follows that if $b_x$ is assigned two edges of Type 2, which requires one above and one below $b_x$, then $b_x$ cannot be assigned an edge of Type 3 or 4.
It follows that $b_x$ is assigned at most five edges.

Because $R$ is a $t$-unit-bar visibility representation of $K_{m,n}$ and each bar corresponding to a vertex in $X$ is assigned at most five edges, it follows that $mn\le 5nt$.
Therefore $t\ge \CL{\frac m5}$.
\end{proof}

When $n=2$, the proof of Theorem~\ref{thm:bipartitem/5} will actually yield an asymptotically best result.

\begin{theorem}\label{thm:Km2}
$ub(K_{2,m})=\frac m4+o(m)$.
\end{theorem}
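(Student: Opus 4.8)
The plan is to show $\tfrac m4-o(m)\le ub(K_{2,m})\le \tfrac m4+o(m)$, with the upper bound coming from a construction and the lower bound from a refinement of Theorem~\ref{thm:bipartitem/5}. Note first that the upper bound does \emph{not} follow directly from Lemma~\ref{lem:KmnUBo(m)}: when $n=2$ Construction~\ref{const:Kmo(m)} degenerates, since $\FL{n/4}=0$ produces no $X$-segments, so every pair would need a dedicated pair of bars and each of $x_1,x_2$ would receive $\approx m$ bars. Instead I would give a direct layout: a single long row assigned to $X$, consisting of the pattern $x_1,x_2,x_1,x_2,\ldots$ repeated $\CL{m/4}$ times, placed between two rows assigned to $Y$ and sheared diagonally as in Constructions~\ref{Knmconst} and~\ref{const:Kmo(m)} so that all but a bounded number of the $X$-bars are interior and see two $Y$-bars above and two below. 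Choosing the cyclic offsets of the two $Y$-rows so that the four $Y$-bars seen by each $X$-bar sweep through all of $Y$ realizes every edge of $K_{2,m}$ and assigns each of $x_1,x_2$ only $\CL{m/4}+O(1)$ bars; adding a dedicated pair of bars for each of the $O(1)$ unrealized boundary pairs keeps the total at $\tfrac m4+o(m)$.

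For the lower bound I would reopen the edge-assignment of Theorem~\ref{thm:bipartitem/5}, in which each $X$-bar is charged at most five edges. Tracing the case analysis shows that a bar $b_x$ is charged exactly five edges only in one configuration: two edges of Type~1, one edge of Type~2 (say with a line of sight above $b_x$), one edge of Type~3, and one edge of Type~4, where the Type~3 and Type~4 edges both come from $X$-bars below $b_x$. Consequently, if the number of such \emph{five-edge bars} is $o(m)$, then writing $t=ub(K_{2,m})$ and counting the $2m$ edges against the $2t$ $X$-bars yields $2m\le 4(2t)+o(m)$, hence $t\ge\tfrac m4-o(m)$, matching the upper bound. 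So the whole lower bound reduces to bounding the number of five-edge bars, and this is where I expect the genuine use of $n=2$ to enter.

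The structural fact I would exploit is that a five-edge bar $b_x$ must have a $Y$-bar $W$ lying directly below it and aligned with it: both its Type~3 and its Type~4 edge are routed through the unique aligned bar that $b_x$ sees below, so $W$ is that bar. Then $W$ is seen by $b_x$ from above and by the two blocking $X$-bars from below, so $W$ is seen by at least three $X$-bars; moreover the map $b_x\mapsto W$ is injective, since of two aligned $X$-bars stacked directly above $W$ only the lower one can see $W$. Because $K_{2,m}$ has only two $X$-vertices, any $Y$-bar seen by three $X$-bars carries a \emph{redundant} visibility (two of those bars represent the same $x_i$), and the idea is to charge each five-edge bar to such a redundancy. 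The main obstacle is the final count: one must show that an optimal representation of $K_{2,m}$ cannot contain $\Theta(m)$ of these blocking configurations, i.e.\ that the total number of redundant $X$-visibilities (equivalently, of overlapping $Y$-bar pairs able to sit in a blocking configuration) is $o(m)$. I expect this to be the hard step, and I would attack it by analyzing the overlap structure of the $Y$-bars in a blocking configuration and arguing that each redundancy forces wasted vertical space or an extra $Y$-bar that an optimal layout cannot afford in more than $o(m)$ places.
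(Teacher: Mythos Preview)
Your upper-bound observation is well taken: Construction~\ref{const:Kmo(m)} does degenerate at $n=2$ (since $\FL{n/4}=0$ there are no $X$-segments at all), so the bound $\frac m4+n+1$ of Lemma~\ref{lem:KmnUBo(m)} is not actually delivered in that case. Your direct ``one $X$-row between two $Y$-rows'' layout is a clean fix and gives $\CL{m/4}+O(1)$ bars per $X$-vertex. The paper simply cites Theorem~\ref{thm:KmnUB} here, so on this half you are being more careful than the paper.

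The lower bound, however, is much simpler than you think, and your asymptotic charging scheme is both unnecessary and left incomplete. You have already isolated the decisive structural fact: in a five-edge configuration the bar $b$ together with the two $X$-bars $b',b''$ for which it blocks (both on one side of $b$) all see a common $Y$-bar $\hat b$, and since $n=2$ two of $b,b',b''$ represent the same vertex of $X$. What you are missing is that this is \emph{already} a contradiction, not merely a redundancy to be charged and later bounded. The three visibilities $b\hat b$, $b'\hat b$, $b''\hat b$ correspond to three edges of $K_{2,m}$ sharing the $Y$-endpoint of $\hat b$, and all three are among the five edges assigned to $b$ (one of Type~1 and the Type~3 and Type~4 edges). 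But two of these three edges coincide as edges of $K_{2,m}$, while at the outset each edge was assigned to a single line of sight. Hence $b$ receives at most four \emph{distinct} edges, and five-edge bars simply do not exist. This gives $ub(K_{2,m})\ge\CL{m/4}$ outright; there is no ``hard step'' of bounding the number of blocking configurations by $o(m)$, and the route you sketch through counting redundant visibilities in an optimal layout is not needed.
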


\begin{proof}
The fact that $ub(K_{2,m})\le \frac m4+o(m)$ is established in Theorem~\ref{thm:KmnUB}.
To prove the lower bound we follow the proof of Theorem~\ref{thm:bipartitem/5}.
We claim that each $X$-bar is assigned at most four edges.
Assume to the contrary that $b$ is an $X$-bar that is assigned five edges.
It follows that $b$ is assigned two edges of Type 1, one edge of Type 2, one edge of Type 3, and one edge of Type 4.
Since a bar cannot be assigned an edge of Type 2 above (respectively below) and block for a bar that is above (respectively below), it follows that $b$ blocks for two $X$-bars that are both above or both below $b$.
Without loss of generality, assume that $b$ blocks for two $X$-bars $b'$ and $b''$ that are below $b$.
It follows that $b$, $b'$, and $b''$ all see the same $Y$-bar $\hat b$ and that $b\hat b$, $b'\hat b$, and $b''\hat b$ are all edges that are assigned to $b$.
However, since $n=2$, two of these three edges are the same, contradicting the assumption that each edge is assigned to a single pair of bars.
Therefore each $X$-bar in a representation of $K_{2,n}$ is assigned at most four bars and it follows that $ub(K_{2,m})\ge \CL{m/4}$.
\end{proof}

We now give another lower bound on the unit bar visibility number of complete bipartite graphs.
This bound is better than the bound from Theorem~\ref{thm:bipartitem/5} when $n\ge \frac 23 m$ and is asymptotically best possible when $n=m-o(m)$.
We note that this bound is an immediate consequence of Lemma~4 from~\cite{CHJLW}; we include the proof because it is conceptually different from those presented so far.

\begin{theorem}\label{thm:KmnLBBipartite}
For $m\ge n\ge 2$, $$ub(K_{m,n})\ge \frac{n}{2(m+n)}m+\frac{2}{m+n}.$$
Therefore, if $n=m-o(m)$, then $ub(K_{m,n})\ge \frac m4+o(m)$.
\end{theorem}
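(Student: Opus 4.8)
The plan is to push the planarity argument down to the level of individual bars, which is what makes it conceptually different from the per-$X$-bar counting used in Theorems~\ref{thm:bipartitem/5} and~\ref{thm:Km2}. Let $t=ub(K_{m,n})$ and let $R$ be a $t$-unit-bar visibility representation of $K_{m,n}$. Since $R$ uses at most $t$ bars for each of the $m+n$ vertices, it contains at most $t(m+n)$ bars in total. Recall that $G(R)$, the graph whose vertices are the bars of $R$ and whose edges are the pairs of bars that see each other, is a (unit) bar visibility graph and is therefore planar.

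Next I would form the spanning subgraph $H$ of $G(R)$ obtained by keeping only those visibilities that join an $X$-bar to a $Y$-bar. By construction $H$ is bipartite, its two parts being the $X$-bars and the $Y$-bars, and it is planar as a subgraph of the planar graph $G(R)$. The crucial point is that $H$ records every edge of $K_{m,n}$: the map sending each edge of $H$ to the pair of $K_{m,n}$-vertices owning its two endpoints is surjective onto $E(K_{m,n})$, since every edge $xy$ is witnessed by some visibility between a bar of $x\in X$ and a bar of $y\in Y$, which is an edge of $H$. Hence $|E(H)|\ge mn$.

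Finally I would apply the edge bound for simple bipartite planar graphs: such a graph on $N\ge 3$ vertices has at most $2N-4$ edges. Taking $N$ to be the number of bars, so $N\le t(m+n)$, I would combine the two estimates to obtain
\[
mn\le |E(H)|\le 2\bigl(t(m+n)\bigr)-4 .
\]
Solving for $t$ yields $t\ge \frac{mn+4}{2(m+n)}=\frac{n}{2(m+n)}m+\frac{2}{m+n}$, which is the claimed bound. The asymptotic statement then follows by substituting $n=m-o(m)$, since the dominant term $\frac{mn}{2(m+n)}$ tends to $\frac m4$, giving $ub(K_{m,n})\ge \frac m4+o(m)$.

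As for difficulty, there is no serious obstacle once the right object is identified; the whole content is the observation that passing from vertices to bars converts the problem into a question about a single bipartite planar graph. The only points requiring routine care are that $H$ is simple (so the $2N-4$ bound is legitimate), that $N\ge 3$ (immediate from $m\ge n\ge 2$), and that replacing the exact bar count by the upper bound $t(m+n)$ is harmless. The step I expect a careful reader to scrutinize is the surjectivity claim, namely that the visibility witnessing each edge $xy$ indeed lies between an $X$-bar and a $Y$-bar so that it survives into $H$; this is immediate from the bipartition of the vertex set, but it is the hinge on which the inequality $|E(H)|\ge mn$ rests.
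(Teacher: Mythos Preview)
Your proposal is correct and follows essentially the same approach as the paper: pass to the planar graph on bars, observe it is bipartite (with parts the $X$-bars and the $Y$-bars), and apply the $2N-4$ edge bound against the $mn$ edges of $K_{m,n}$. Your explicit restriction to the $X$--$Y$ visibility subgraph $H$ is, if anything, slightly more careful than the paper's one-line assertion that ``the resulting planar graph is also bipartite,'' but the argument is the same in substance.
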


\begin{proof}
Let $R$ be a $t$-unit-bar visibility layout of $K_{m,n}$.
Represent each visibility in $R$ as a line segment joining two bars and then contract each bar to a point.
This yields a planar graph.
Because $R$ is a representation of a bipartite graph, the resulting planar graph is also bipartite.
Since there are at most $t(m+n)$ bars in the layout, there are at most $2t(m+n)-4$ edges in the representation.
Since $K_{m,n}$ has $mn$ edges, it follows that 
\begin{align*}
t&\ge\CL{\frac{mn+4}{2(m+n)}}\\
&\ge \frac{n}{2(m+n)}m+\frac{2}{m+n}.
\end{align*}
If $n=m-o(m)$, then it follows that 
\begin{align*}
t&\ge \frac{m^2}{4m-o(m)}-\frac{o(m^2)}{4m-o(m)}+\frac{2}{2m-o(m)}\\
&=\frac m4-o(m).\qedhere
\end{align*}
\end{proof}

Theorems~\ref{thm:KmnUB} and~\ref{thm:KmnLBBipartite} give an asymptotically sharp result for $K_{m,n}$ when $m$ and $n$ are asymptotic.

\begin{corollary}\label{cor:Kmn}
If $n=m-o(m)$, then $ub(K_{m,n})=\frac m4+o(m)$.
\end{corollary}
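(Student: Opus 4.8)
The plan is to obtain the claimed asymptotic equality by sandwiching $ub(K_{m,n})$ between matching upper and lower bounds that have already been established, so that no new construction or combinatorial argument is required. First I would invoke Theorem~\ref{thm:KmnUB}, whose second conclusion gives directly that $ub(K_{m,n}) \le \frac{m}{4} + o(m)$; this half of the statement needs nothing beyond citing the theorem.

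For the matching lower bound I would appeal to Theorem~\ref{thm:KmnLBBipartite}, which asserts that $ub(K_{m,n}) \ge \frac{n}{2(m+n)}m + \frac{2}{m+n}$ for all $m \ge n \ge 2$. The only substantive step is to verify the asymptotics under the hypothesis $n = m - o(m)$. Substituting this into $\frac{mn}{2(m+n)}$ yields $\frac{m^2 - o(m^2)}{4m - o(m)}$, which simplifies to $\frac{m}{4} - o(m)$ once one divides through by $4m$ and absorbs the lower-order corrections. This is precisely the computation carried out in the displayed inequalities at the end of the proof of Theorem~\ref{thm:KmnLBBipartite}, so I would simply reproduce or reference that conclusion to obtain $ub(K_{m,n}) \ge \frac{m}{4} - o(m)$.

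Combining the two bounds gives $\frac{m}{4} - o(m) \le ub(K_{m,n}) \le \frac{m}{4} + o(m)$, from which the equality $ub(K_{m,n}) = \frac{m}{4} + o(m)$ follows at once. I do not expect any real obstacle here: all of the difficulty lives in the two theorems being combined---namely Constructions~\ref{Knmconst} and~\ref{const:Kmo(m)} supporting the upper bound, and the planarity-plus-Euler-formula edge count supporting the lower bound. The only point that demands a moment's care is confirming that the error terms in the lower-bound expression are genuinely $o(m)$, rather than merely $O(m)$, when $n$ and $m$ are asymptotically equal; the substitution above settles this, since the hypothesis $n = m - o(m)$ forces both $m - n$ and the resulting correction to the quotient to be of lower order than $m$.
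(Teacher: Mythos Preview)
Your proposal is correct and matches the paper's approach exactly: the corollary is presented there as an immediate consequence of Theorems~\ref{thm:KmnUB} and~\ref{thm:KmnLBBipartite}, with the asymptotic computation for the lower bound already carried out at the end of the proof of Theorem~\ref{thm:KmnLBBipartite}.
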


\section{Complete Graphs}\label{complete}

In~\cite{CHJLW}, Chang et al.\ proved that $b(K_n)=\CL{n/6}$, using constructions derived from the solution to Heawood's empire problem~\cite{Heawood,J-R,Wessel}.
They also provided a simpler construction to prove $b(K_n)\le\CL{n/6}+1$.
Based on this simpler construction we prove the following bound on the unit bar visibility number of $K_n$.

\begin{theorem}
$\CL{\frac n6}\le ub(K_n)\le \CL{\frac{n+4}{6}}$.
\end{theorem}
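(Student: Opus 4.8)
The lower bound $ub(K_n) \geq \lceil n/6 \rceil$ is immediate: since every $t$-unit-bar visibility representation is in particular a $t$-bar visibility representation, we have $ub(K_n) \geq b(K_n)$, and Chang et al.\ proved $b(K_n) = \lceil n/6 \rceil$. So the entire content of the theorem is the upper bound, and I would state the lower bound in a single sentence.

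**Upper bound: the plan.**
The key phrase is that the authors will build on the \emph{simpler} construction that Chang et al.\ used to show $b(K_n) \leq \lceil n/6\rceil + 1$. The plan is to reproduce that construction with unit bars, paying the extra cost that the unit-length restriction forces. In a general (non-unit) $6$-bar layout of $K_n$, one typically lays $n$ ``long'' bars in a staircase so that each bar sees many others, then handles the remaining visibilities with short bars. The idea is to take such an arrangement and replace each long bar by a run of contiguous unit bars (exactly the subdivision trick used in Constructions~\ref{Knmconst} and~\ref{const:Kmo(m)} for $K_{m,n}$). Each vertex then receives one unit bar per ``level'' it passes through, and the claim is that a careful staircase lets each vertex participate in enough simultaneous visibilities that only $\lceil (n+4)/6\rceil$ unit bars per vertex are needed.

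**Key steps, in order.**
First I would describe a cyclic staircase layout: arrange roughly $n/6$ horizontal levels, on each level place a length-$n$ segment subdivided into $n$ unit bars labeled by the $n$ vertices in a cyclically shifted order, and offset consecutive levels by half a unit (as in Construction~\ref{Knmconst}) so that each interior unit bar sees four bars above-and-below on the adjacent levels. Second, I would verify that as the cyclic shift advances level by level, the four-element neighbor-sets seen by the copies of a fixed vertex $v$ are pairwise disjoint, so that across all levels $v$ sees a large set of distinct vertices. Third, I would count: with $\lceil n/6\rceil$ levels and four new neighbors per interior bar, a vertex acquires close to $4\cdot(n/6)\approx 2n/3$ of its $n-1$ needed visibilities from the staircase; the deficit is handled by adding, to the right of the staircase, one extra ``rendezvous'' unit bar for each missing adjacency, exactly as in the $K_{m,n}$ constructions. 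Fourth, I would total the per-vertex bar count and show it is at most $\lceil (n+4)/6\rceil$, with the additive ``$+4$'' absorbing the boundary levels (top and bottom segments see fewer bars) and the leftmost/rightmost bars of each segment (which see only three neighbors). Finally, I would note the cases $n \equiv 1,2 \pmod 6$, where the floor/ceiling arithmetic makes the upper and lower bounds coincide, yielding the exact value.

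**Main obstacle.**
The hard part is the bookkeeping that turns the geometric picture into the clean bound $\lceil (n+4)/6\rceil$. Two sources of loss must be controlled simultaneously: boundary effects (bars on the top and bottom levels, and the leftmost/rightmost bar of each level, see strictly fewer than four neighbors, so each vertex incurs a bounded number of ``rendezvous'' bars) and the overlap/alignment subtleties that the Theorem~\ref{thm:bipartitem/5} analysis shows are unavoidable in unit-bar layouts. I expect the delicate step to be proving that the neighbor-sets seen by a single vertex across the levels are genuinely disjoint (so no visibility is wasted by being double-counted), together with bounding the number of extra bars any one vertex needs so that the constant contribution stays within the ``$+4$'' budget rather than growing with $n$. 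Once disjointness and the boundary count are pinned down, the final inequality $\lceil n/6\rceil + O(1) \leq \lceil (n+4)/6\rceil$ should follow from routine ceiling estimates, so I would keep that computation brief.
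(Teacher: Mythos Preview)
Your lower bound is fine and matches the paper. The upper bound, however, has a genuine gap: the staircase construction you describe cannot reach $\lceil(n+4)/6\rceil$.

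Count edges globally. In your layout each level carries all $n$ vertices, and between two consecutive levels each bar sees at most two bars on the other level, so a pair of adjacent levels contributes at most $2n$ visibilities. With $L$ levels that is at most $2n(L-1)$ visibilities in the staircase, while $K_n$ has $\binom{n}{2}$ edges; hence $L\ge (n+3)/4$ is forced just to cover the edges, even before rendezvous bars. Equivalently, from a single vertex's point of view: with $L=\lceil n/6\rceil$ levels and four neighbors per interior bar you pick up about $2n/3$ adjacencies, leaving a deficit of about $n/3$ per vertex. Each missing adjacency costs that vertex an extra rendezvous bar, so the per-vertex bar count is $\lceil n/6\rceil+\Theta(n)$, not $\lceil n/6\rceil+O(1)$. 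Your final sentence (``the constant contribution stays within the $+4$ budget'') is exactly the step that fails.

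The paper's construction is not a staircase at all. Chang et al.'s ``simpler'' construction for $K_{6m}$ partitions the vertex set into three parts of size $2m$, decomposes each induced $K_{2m}$ into $m$ Hamiltonian paths, and thereby decomposes $K_{6m}$ into $3m$ copies of $P_{2m}\vee 2K_1$ so that every vertex lies in exactly $m+1$ of them. The crucial point is that $P_{2m}\vee 2K_1$ has a unit bar visibility representation on $2m+2$ bars in which the two bars for the $2K_1$ each see all $2m$ path bars. Those two high-degree bars are what push the average visibilities-per-bar from $4$ up toward $6$; a pure staircase has no analogue of them. Taking the disjoint union of these $3m$ representations gives $ub(K_{6m})\le m+1$, and deleting vertices handles general $n$ with $ub(K_n)\le\lceil n/6\rceil+1$. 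The refinement to $\lceil(n+4)/6\rceil$ comes from a separate trick for $n\equiv1,2\pmod 6$: stack the $3m$ blocks with controlled horizontal offsets so that two new vertices can each be served by a single long-range bar placed above (respectively below) the entire stack.
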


\begin{proof}
The lower bound follows from the facts that $b(G)\le ub(G)$ for all graphs and $b(K_n)=\CL{\frac n6}$.
For the upper bound, we give a brief description of the construction of an $(m+1)$-bar visibility representation of $K_{6m}$ from~\cite{CHJLW}.
Our only modification is the trivial observation that this construction can be obtained using $(m+1)$-unit-bars.

Partition $V(K_{6m})$ into three sets of size $2m$, which we call $V_1
$, $V_2$, and $V_3$.
The complete graph $K_{2m}$ has a decomposition into $m$ copies of $P_{2m}$, which can be obtained by rotating a zig-zag path when the vertices are placed on a circle.
The unit bar visibility representation of $K_{6m}$ is then obtained by decomposing $K_{6m}$ into $3m$ copies of $P_{2m}\vee 2K_1$ (the {\it join} of two graphs, $G\vee H$, is the graph obtained from the disjoint union of $G$ and $H$ by adding all edges joining $V(G)$ and $V(H)$).
Each copy of $P_{2m}\vee 2K_1$ will consist of a copy of $P_{2m}$ from the path-decomposition of $G[V_i]$ and two vertices from $V_{i-1\pmod 3}$ for some $i\in [3]$.
Provided that the sets of two vertices from $V_{i-1}$ are pairwise disjoint, each vertex lies in $m+1$ of the copies of $P_{2m}\vee 2K_1$.
The disjoint union of $3m$ copies of the unit bar visibility representation of $P_{2m}\vee 2K_1$ in Figure~\ref{fig:K_6m} with appropriate vertex-labels for the bars completes the construction.

\begin{figure}
\centering
\begin{tikzpicture}
\draw[line width=2pt] (.05,0)--(3.05,0);
\draw[line width=2pt] (-1.3,.2)--(1.7,.2);
\draw[line width=2pt] (-1,.4)--(2,.4);
\draw[line width=2pt] (-.7,.6)--(2.3,.6);
\draw[line width=2pt] (-.4,.8)--(2.7,.8);
\draw[line width=2pt] (-.1,1)--(2.9,1);
\draw[line width=2pt] (.2,1.2)--(3.2,1.2);
\draw[line width=2pt] (-1.15,1.4)--(1.85,1.4);

\end{tikzpicture}
\caption{A unit bar visibility representation of $P_{2m}\vee 2K_1$ with $m=3$.}\label{fig:K_6m}
\end{figure}
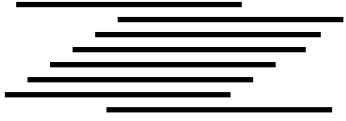

When $n$ is not divisible by $6$, a representation of $K_n$ can be obtained from the representation of $K_{6\CL{n/6}}$ by deleting the bars assigned to vertices not in $K_n$.
Therefore $ub(K_n)\le \CL{n/6}+1$ for all $n$.
It remains to show that we can do improve this bound by $1$ when $n\equiv 1\pmod 6$ or $n\equiv 2\pmod 6$.
We will assume that $n\equiv 2\pmod 6$ since a representation of $K_{6m+1}$ can be obtained from a representation of $K_{6m+2}$ by deleting the bars assigned to the extra vertex.
The reader is advised to consult Figure~\ref{fig:K14} to see an explicit example of the construction.

Let $n=6m+2$.
We begin with a decomposition of $K_{6m}$ into $3m$ copies of $P_{2m}\vee 2K_1$.
For $i\in [3]$, let $V_i=\{v_1^i,\ldots,v_m^i\}$.
Given $i\in [3]$ and $j\in [m]$, let $H_{i,j}$ be a copy of $P_{2m}\vee 2K_1$ where
\begin{enumerate}
\item the path $P_{2m}$ is the zigzag path on $V_i$ with endpoints $v_j^i$ and $v_{m+j}^i$, and
\item the copy of $2K_1$ has vertex set $\{v_j^{i-1\pmod 3},v_{m+j}^{i-1\pmod 3}\}$.
\end{enumerate}
For each $H_{i,j}$ we construct a copy of the representation in Figure~\ref{fig:K_6m}, where
\begin{enumerate}
\item $v_j^i$ is assigned to the second bar from the bottom (the bottom bar of the path);
\item $v_{m+j}^i$ is assigned to the second bar from the top (the top bar in the path);
\item $v_{j}^{i-1\pmod 3}$ is assigned to the bottom bar in the representation;
\item $v_{m+j}^{i-1\pmod 3}$ is assigned to the top bar in the representation.
\end{enumerate}

Note that in Figure~\ref{fig:K_6m} the second bar from the bottom is to the left of the top bar, and the second bar from the top is to the right of the bottom bar.
Furthermore, it is possible to compress the arrangement so that 1) the second bar from the bottom is to the left of the top bar by exactly $\frac{1}{12m}$, and 2) the second bar from the top is to the right of the bottom bar by exactly $\frac{1}{12m}$.

To complete the construction, we stack the representations of $P_{2m}\vee 2K_1$ so that every bar in the representation of $H_{i,j}$ is below every bar in the representation of $H_{i',j'}$ when $i<i'$, or $i=i'$ and $j<j'$.
Furthermore, for $i\in[3]$ and $j\in [m-1]$ arrange the representations of $H_{i,j}$ and $H_{i,j+1}$ so that the second bar from the bottom of $H_{i,j+1}$ is to the right of the topmost bar of $H_{i,j}$ by exactly $\frac{1}{12m}$.
Similarly, for $i\in[2]$ arrange the representations of $H_{i,m}$ and $H_{i+1,1}$ so that the second bar from the bottom of $H_{i+1,j}$ is to the right of the topmost bar of $H_{i,m}$ by exactly $\frac{1}{12m}$.
This yields a representation of $K_{6m}$ in which a single bar can be added above all other bars that will see the highest and second lowest bars of $H_{i,j}$ for all $i\in [3]$ and $j\in [m]$.
Similarly a single bar can be added below all other bars that will see the lowest and second highest bars of $H_{i,j}$ for all $i\in [3]$ and $j\in [m]$.
By construction, these bars will both see bars assigned to each vertex in $V_1\cup V_2\cup V_3$.
Letting $K_n$ have vertex set $V_1,\cup V_2\cup V_3\cup\{x,y\}$, we place a bar for $x$ and a bar for $y$ in those positions.
The addition of an extra bar for $x$ that sees the bar for $y$ finishes the construction.
\end{proof}

\begin{figure}
\centering
\begin{tikzpicture}
\draw[line width=2pt] (1.25,-.25)--(7.25,-.25);
\node [left] at (1.25,-.25) {$9$};
\draw[line width=2pt] (0,0)--(6,0);
\node [left] at (0,0) {$1$};
\draw[line width=2pt] (.5,.25)--(6.5,.25);
\node [left] at (.5,.25) {$2$};
\draw[line width=2pt] (1,.5)--(7,.5);
\node [left] at (1,.5) {$4$};
\draw[line width=2pt] (1.5,.75)--(7.5,.75);
\node [left] at (1.5,.75) {$3$};
\draw[line width=2pt] (.25,1)--(6.25,1);
\node [left] at (.25,1) {$11$};
\draw [decorate,decoration={brace,amplitude=4pt},xshift=0cm,yshift=0pt]
      (-1,-.25) -- (-1,1) node [midway,left] {$H_{1,1}$};
\draw [shift={(.5,1.75)},line width=2pt] (1.25,-.25)--(7.25,-.25);
\draw [shift={(.5,1.75)},line width=2pt] (0,0)--(6,0);
\draw [shift={(.5,1.75)},line width=2pt] (.5,.25)--(6.5,.25);
\draw [shift={(.5,1.75)},line width=2pt] (1,.5)--(7,.5);
\draw [shift={(.5,1.75)},line width=2pt] (1.5,.75)--(7.5,.75);
\draw [shift={(.5,1.75)},line width=2pt] (.25,1)--(6.25,1);
\node [shift={(.5,1.75)},left] at (.25,1)     {$12$};
\node [shift={(.5,1.75)},left] at (1.5,.75)   {$4$};
\node [shift={(.5,1.75)},left] at (1,.5)      {$1$};
\node [shift={(.5,1.75)},left] at (.5,.25)    {$3$};
\node [shift={(.5,1.75)},left] at (0,0)       {$2$};
\node [shift={(.5,1.75)},left] at (1.25,-.25) {$10$};
\draw [shift={(0, 1.75)},decorate,
		decoration={brace,amplitude=4pt},xshift=0cm,yshift=0pt]
      (-1,-.25) -- (-1,1) node [midway,left] {$H_{1,2}$};
\draw [shift={(1,3.5)},line width=2pt] (1.25,-.25)--(7.25,-.25);
\draw [shift={(1,3.5)},line width=2pt] (0,0)--(6,0);
\draw [shift={(1,3.5)},line width=2pt] (.5,.25)--(6.5,.25);
\draw [shift={(1,3.5)},line width=2pt] (1,.5)--(7,.5);
\draw [shift={(1,3.5)},line width=2pt] (1.5,.75)--(7.5,.75);
\draw [shift={(1,3.5)},line width=2pt] (.25,1)--(6.25,1);
\node [shift={(1,3.5)},left] at (.25,1)     {$3$};
\node [shift={(1,3.5)},left] at (1.5,.75)   {$7$};
\node [shift={(1,3.5)},left] at (1,.5)      {$8$};
\node [shift={(1,3.5)},left] at (.5,.25)    {$6$};
\node [shift={(1,3.5)},left] at (0,0)       {$5$};
\node [shift={(1,3.5)},left] at (1.25,-.25) {$1$};
\draw [shift={(0, 3.5)},decorate,
		decoration={brace,amplitude=4pt},xshift=0cm,yshift=0pt]
      (-1,-.25) -- (-1,1) node [midway,left] {$H_{2,1}$};
\draw [shift={(1.5,5.25)},line width=2pt] (1.25,-.25)--(7.25,-.25);
\draw [shift={(1.5,5.25)},line width=2pt] (0,0)--(6,0);
\draw [shift={(1.5,5.25)},line width=2pt] (.5,.25)--(6.5,.25);
\draw [shift={(1.5,5.25)},line width=2pt] (1,.5)--(7,.5);
\draw [shift={(1.5,5.25)},line width=2pt] (1.5,.75)--(7.5,.75);
\draw [shift={(1.5,5.25)},line width=2pt] (.25,1)--(6.25,1);
\node [shift={(1.5,5.25)},left] at (.25,1)     {$4$};
\node [shift={(1.5,5.25)},left] at (1.5,.75)   {$8$};
\node [shift={(1.5,5.25)},left] at (1,.5)      {$5$};
\node [shift={(1.5,5.25)},left] at (.5,.25)    {$7$};
\node [shift={(1.5,5.25)},left] at (0,0)       {$6$};
\node [shift={(1.5,5.25)},left] at (1.25,-.25) {$2$};
\draw [shift={(0,5.2)},decorate,
		decoration={brace,amplitude=4pt},xshift=0cm,yshift=0pt]
      (-1,-.25) -- (-1,1) node [midway,left] {$H_{2,2}$};
\draw [shift={(2,7)},line width=2pt] (1.25,-.25)--(7.25,-.25);
\draw [shift={(2,7)},line width=2pt] (0,0)--(6,0);
\draw [shift={(2,7)},line width=2pt] (.5,.25)--(6.5,.25);
\draw [shift={(2,7)},line width=2pt] (1,.5)--(7,.5);
\draw [shift={(2,7)},line width=2pt] (1.5,.75)--(7.5,.75);
\draw [shift={(2,7)},line width=2pt] (.25,1)--(6.25,1);
\node [shift={(2,7)},left] at (.25,1)     {$7$};
\node [shift={(2,7)},left] at (1.5,.75)   {$11$};
\node [shift={(2,7)},left] at (1,.5)      {$12$};
\node [shift={(2,7)},left] at (.5,.25)    {$10$};
\node [shift={(2,7)},left] at (0,0)       {$9$};
\node [shift={(2,7)},left] at (1.25,-.25) {$5$};
\draw [shift={(0, 7)},decorate,
		decoration={brace,amplitude=4pt},xshift=0cm,yshift=0pt]
      (-1,-.25) -- (-1,1) node [midway,left] {$H_{3,1}$};
\draw [shift={(2.5,8.75)},line width=2pt] (1.25,-.25)--(7.25,-.25);
\draw [shift={(2.5,8.75)},line width=2pt] (0,0)--(6,0);
\draw [shift={(2.5,8.75)},line width=2pt] (.5,.25)--(6.5,.25);
\draw [shift={(2.5,8.75)},line width=2pt] (1,.5)--(7,.5);
\draw [shift={(2.5,8.75)},line width=2pt] (1.5,.75)--(7.5,.75);
\draw [shift={(2.5,8.75)},line width=2pt] (.25,1)--(6.25,1);
\node [shift={(2.5,8.75)},left] at (.25,1)     {$8$};
\node [shift={(2.5,8.75)},left] at (1.5,.75)   {$12$};
\node [shift={(2.5,8.75)},left] at (1,.5)      {$9$};
\node [shift={(2.5,8.75)},left] at (.5,.25)    {$11$};
\node [shift={(2.5,8.75)},left] at (0,0)       {$10$};
\node [shift={(2.5,8.75)},left] at (1.25,-.25) {$6$};
\draw [shift={(0, 8.75)},decorate,
		decoration={brace,amplitude=4pt},xshift=0cm,yshift=0pt]
      (-1,-.25) -- (-1,1) node [midway,left] {$H_{3,2}$};

\draw [line width=2pt] (5,-.75)--(11,-.75);
\node [left] at (5,-.75)     {$14$};
\draw [line width=2pt] (5,-1.25)--(11,-1.25);
\node [left] at (5,-1.25)     {$13$};
\draw [line width=2pt] (-.75,10.25)--(5.25,10.25);
\node [left] at (-.75,10.25)     {$13$};
\end{tikzpicture}
\caption{A $3$-unit bar visibility representation of $K_{14}$.  Bars assigned to $v_j^i$ are labeled by the number $4(i-1)+j$.
Bars for $x$ are labelled 13, and the bar for $y$ is labelled 14.}\label{fig:K14}
\end{figure}
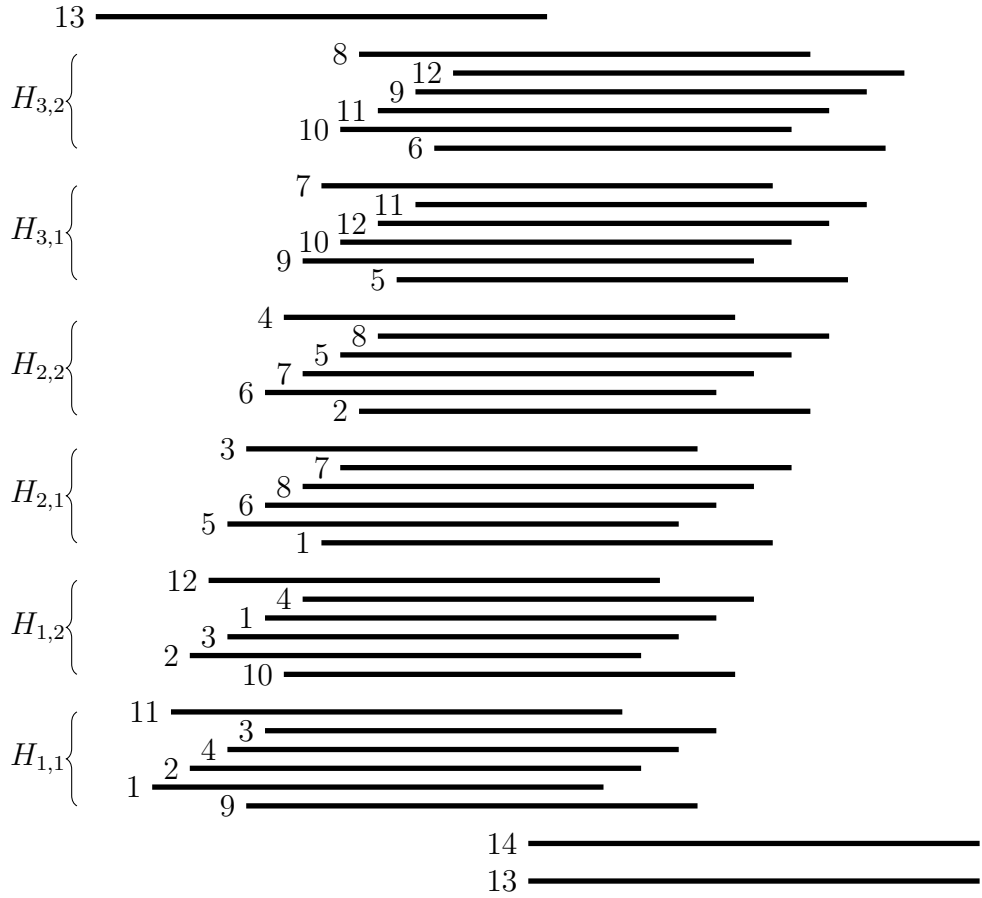

\section{Open Questions}\label{conclusion}

We conclude with some final remarks and open questions.
The first questions come from~\cite{DV} and~\cite{DE-MHP}.
\begin{question}
Is there a simple characterization of unit bar visibility graphs?
Is there a simple characterization of unit rectangle visibility graphs?
\end{question}

Regarding planar graphs, we ask if the bounds of Theorems~\ref{thm:planar} and~\ref{thm:planargirth} are sharp.
\begin{question}
Is there a planar graph $G$ such that $ub(G)=\CL{\frac{\Delta(G)+1}{3}}+2$?
Is there a planar graph $G$ with girth at least $7$ such that $ub(G)=\CL{\frac{\Delta(G)+1}{3}}+1$?
\end{question}

In light of Corollary~\ref{cor:Kmn} and Theorem~\ref{thm:Km2}, we conjecture that the upper bound on $ub(K_{m,n})$ is asymptotically sharp.
\begin{conjecture}
For $m\ge n\ge 2$, $ub(K_{m,n})=\frac m4+o(m)$.
\end{conjecture}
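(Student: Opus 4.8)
The plan is to leave the upper bound untouched---Theorem~\ref{thm:KmnUB} already gives $ub(K_{m,n})\le \frac m4+o(m)$---and to concentrate entirely on the matching lower bound $ub(K_{m,n})\ge \frac m4-o(m)$ for every $n$ with $2\le n\le m$. Two of the three regimes are already in hand: Theorem~\ref{thm:Km2} settles $n=2$, and the planarity bound of Theorem~\ref{thm:KmnLBBipartite} (hence Corollary~\ref{cor:Kmn}) settles $n=m-o(m)$. The entire content of the conjecture therefore lies in the intermediate range, where neither existing technique reaches $\frac m4$: the edge-assignment bound of Theorem~\ref{thm:bipartitem/5} stalls at $\frac m5$, while for $n=cm$ with fixed $c<1$ the planarity bound yields only $\frac{c}{2(1+c)}m<\frac m4$.

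My approach would be to sharpen the edge-assignment argument of Theorem~\ref{thm:bipartitem/5} from a per-bar bound to an averaged bound. Recall that in that proof each $X$-bar is charged at most five edges, and that a bar is charged five only in the extremal configuration where it carries exactly one Type~2 visibility (say upward) and blocks for two $X$-bars on the opposite (downward) side---one on the left and one on the right. The goal is to show that these five-edge bars cannot be too numerous: concretely, that the total charge over all $X$-bars is $(4+o(1))nt$, which immediately gives $mn\le(4+o(1))nt$ and hence $t\ge \frac m4-o(m)$.

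First I would reproduce the edge assignment and isolate the exact structure of every five-edge bar, recording for each such bar $b$ the two bars $b',b''$ that $b$ blocks for and the aligned $Y$-bars that witness these blocks. Next I would set up a discharging scheme in which each five-edge bar sends its single unit of excess charge to a distinct recipient---a natural candidate is one of the bars it blocks for or one of the witnessing $Y$-bars---using the facts that a bar blocks on the left for at most one bar and on the right for at most one bar, and that the blocked bars are aligned with $Y$-bars. The final step would be to argue, via the planarity of the contracted visibility graph together with the unit-length constraint, that no recipient absorbs more than $o(m)$ excess on average, so that the total excess over $4nt$ is $o(mnt)$.

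The hard part will be the third step. There is no evident reason why five-edge bars should be rare, and in principle a constant fraction of the $X$-bars could each attain five, which is exactly why the statement remains only a conjecture. The two settled regimes each exploit a mechanism unavailable in general: the $n=2$ argument rules out the fifth edge outright through a pigeonhole collision among only two $X$-vertices, and the planarity argument degrades below $\frac m4$ as soon as $n<m$. I therefore expect that closing the intermediate range requires a genuinely global structural invariant that simultaneously uses planarity and the unit-length restriction to cap how many bars can block twice, rather than any purely local refinement of the five-edge count.
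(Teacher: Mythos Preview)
The statement is a \emph{conjecture} in the paper; the paper offers no proof and explicitly lists it among the open problems in Section~\ref{conclusion}. There is therefore nothing to compare your argument against.

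Your proposal is not a proof and you are candid about this: you correctly isolate the missing regime (intermediate $n$, where neither Theorem~\ref{thm:bipartitem/5} nor Theorem~\ref{thm:KmnLBBipartite} reaches $m/4$), you sketch a discharging refinement of the Type~1--4 edge-assignment, and you then acknowledge that the crucial third step---bounding the density of five-edge $X$-bars---has no evident mechanism. That assessment matches the state of the paper. The conjecture remains open, and your write-up should be read as a research plan, not a proof.
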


In~\cite{CHJLW}, Chang et al.\ proved that if $G$ is an $n$-vertex graph, then $b(G)\le\CL{n/6}+2$.
Lov\' asz~\cite{Lovasz} proved that every $n$-vertex graph can be decomposed into at most $\FL{n/2}$ paths and cycles, and since paths and cycles are both unit bar visibility graphs, we have the following general bound.

\begin{proposition}
If $G$ is an $n$-vertex graph, then $ub(G)\le \FL{n/2}$.
\end{proposition}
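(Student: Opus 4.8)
The plan is to combine a classical edge-decomposition theorem with the disjoint-union construction already used in the proof of Theorem~\ref{thm:treesarboricity}. By Lov\'asz's theorem~\cite{Lovasz}, the edge set of any $n$-vertex graph $G$ decomposes into a collection $\mathcal H=\{H_1,\ldots,H_\ell\}$ of paths and cycles with $\ell\le\FL{n/2}$. The argument then reduces to two ingredients: that each $H_i$ is a unit bar visibility graph, and that an edge-decomposition of $G$ into $\ell$ UBVGs yields an $\ell$-unit-bar visibility representation of $G$.

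First I would record the disjoint-union principle. Suppose $G$ edge-decomposes into $H_1,\ldots,H_\ell$, each a UBVG, and fix for each $H_i$ an ordinary unit bar visibility representation $R_i$ assigning one bar to each of its vertices. Form the disjoint union $R=R_1\sqcup\cdots\sqcup R_\ell$, placing the $R_i$ side by side so that their $x$-projections are pairwise disjoint, and assign to each vertex $v$ the set of its bars across the $R_i$. This is a valid $t$-unit-bar layout: the bars are pairwise nonintersecting, and since distinct $R_i$ have disjoint projections the only visibilities in $R$ are those internal to some $R_i$. Because $\mathcal H$ is an edge-decomposition, two vertices $u$ and $v$ see each other in $R$ exactly when the unique $H_i$ containing the edge $uv$ records that visibility; hence $R$ represents $G$. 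Each vertex receives at most one bar per part, and there are only $\ell$ parts, so $R$ is an $\ell$-unit-bar visibility representation, giving $ub(G)\le\ell\le\FL{n/2}$.

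It remains to verify that paths and cycles are UBVGs. A path has maximum degree at most $2$ and is trivially a subdivided caterpillar, so it is a UBVT by Theorem~\ref{UBVG trees}. For a cycle $C_k$ I would exhibit a direct \emph{tent} layout: place the lowest bar $v_1$ so that its two cyclic neighbours overlap its left and right halves, then run two monotone staircases upward—one drifting up-and-left, the other up-and-right—each realizing a path exactly as in the standard staircase representation of $P_k$, and finally let the two top bars overlap to close the cycle. Because the two staircases separate horizontally as they climb, the only intended cross-visibilities are the one at the apex and the two at $v_1$.

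The main obstacle is precisely this cycle construction: one must check that no unintended visibilities arise (in particular that the two neighbours of $v_1$ do not see each other, and that non-consecutive bars on opposite staircases are screened off), which is a routine but slightly delicate bookkeeping of projection overlaps; the small cases $k=3,4$ can be checked directly, with $C_4=K_{2,2}$ already covered by Theorem~\ref{ubvg bipartites}. Everything else is immediate, and the proposition follows.
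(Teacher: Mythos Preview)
Your proposal is correct and follows essentially the same approach as the paper: invoke Lov\'asz's decomposition of any $n$-vertex graph into at most $\FL{n/2}$ paths and cycles, observe that paths and cycles are unit bar visibility graphs, and take the disjoint union of their representations. The paper simply asserts that paths and cycles are UBVGs, whereas you spell out the cycle construction explicitly, but the argument is the same.
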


Among $n$-vertex graphs, the largest unit bar visibility number that we have found is for $K_{1,n-1}$, where $ub(K_{1,n-1})=\CL{(n-1)/3}$.
This leads to the following question.
\begin{question}
What is the largest value that $ub(G)$ can take when $G$ is an $n$-vertex graph?
In particular, is there an $n$-vertex graph $G$ for which $ub(G)>\CL{(n-1)/3}$?
\end{question}

We note that Algorithm~\ref{alg:tree} does not try to minimize the number of trees used in the decomposition.
It is natural to seek a decomposition into unit bar visibility forests that is in some sense more efficient.
\begin{question}
Is there an algorithm to efficiently determine the minimum number of elements in a decomposition of $T$ into UBVTs in which no vertex lies in more than $ub(T)$ of the elements?
\end{question}

Moving in a direction analogous to the idea of total interval numbers (see~\cite{AA}), we might seek unit bar visibility representations of graphs in which the total number of unit bars is minimized.

\begin{question}
Given a graph $G$, what is the minimum number bars in a unit bar visibility layout where the bars can be labeled by $V(G)$ giving a visibility representation of $G$?
\end{question}

\end{document}